\documentclass[11pt]{amsart}

\usepackage{amssymb}
\usepackage[all]{xy}
\usepackage{url}

\pdfoutput=1
\usepackage{microtype}
\usepackage{hyperref}
\usepackage{xcolor}
\hypersetup{
    colorlinks,
    linkcolor={red!50!black},
    citecolor={blue!50!black},
    urlcolor={blue!80!black}
}

\usepackage{graphicx}

\newtheorem{thm}{Theorem}[section]
\newtheorem{lem}[thm]{Lemma}
\newtheorem{cor}[thm]{Corollary}
\newtheorem{prop}[thm]{Proposition}

\theoremstyle{definition}

\newtheorem{defn}[thm]{Definition}
\newtheorem{defns}[thm]{Definitions}

\newtheorem{notation}[thm]{Notation}
\newtheorem{ex}[thm]{Example}
\newtheorem{exs}[thm]{Examples}

\theoremstyle{remark}

\newtheorem{rem}[thm]{Remark}

\numberwithin{equation}{section}

\newcommand{\thmref}[1]{Theorem~\ref{#1}}
\newcommand{\corref}[1]{Corollary~\ref{#1}}
\newcommand{\secref}[1]{\S\ref{#1}}

\newcommand{\propref}[1]{Proposition~\ref{#1}}
\newcommand{\lemref}[1]{Lemma~\ref{#1}}

\newcommand{\exref}[1]{Example~\ref{#1}}

\newcommand{\Hom}{\operatorname{Hom}}
\newcommand{\Ext}{\operatorname{Ext}}

\newcommand{\IM}{\operatorname{im}}

\newcommand{\id}{\mathrm{id}}

\newcommand{\U}{{\mathcal  U}}

\newcommand{\Z}{{\mathbb  Z}}
\newcommand{\N}{{\mathbb  N}}
\newcommand{\R}{{\mathbb  R}}
\newcommand{\Q}{{\mathbb  Q}}

\newcommand{\C}{{\mathbb  C}}

\newcommand{\Sinfty}{\Sigma^{\infty}}
\newcommand{\Oinfty}{\Omega^{\infty}}

\newcommand{\sm}{\wedge}

\newcommand{\ra}{\rightarrow}
\newcommand{\xra}{\xrightarrow}

\newcommand{\xla}{\xleftarrow}

\newcommand{\hra}{\hookrightarrow}

\begin{document}

\title[Type 2 complexes constructed from Brown--Gitler spectra]{Type 2 complexes constructed from Brown--Gitler spectra}

\author[Balderrama]{William Balderrama}
\address{Mathematical Institute \\ University of Bonn \\ Bonn, Germany}
\email{williamb@math.uni-bonn.de}

\author[Barhite]{Justin Barhite}
\address{Department of Mathematics \\ University of Colorado \\ Boulder, CO 80309}
\email{Justin.Barhite@colorado.edu}

\author[Kuhn]{Nicholas J.~Kuhn}
\address{Department of Mathematics \\ University of Virginia \\ Charlottesville, VA 22904}
\email{njk4x@virginia.edu}

\author[Larson]{Donald M.~Larson}
\address{Department of Mathematics \\ Catholic University of America \\ Washington, DC 20064}
\email{larsond@cua.edu}


\date{\today}

\subjclass[2020]{Primary 55S12; Secondary 55S10, 55Q55}

\begin{abstract}  In a previous paper, one of us interpreted mod 2 Dyer-Lashof operations as explicit $A$--module extensions between Brown-Gitler modules, and showed these $A$--modules can be topologically realized by finite spectra occurring as fibers of maps between 2-local dual Brown--Gitler spectra.

Starting from these constructions, in this paper, we show that infinite families of these finite spectra are of chromatic type 2, with mod 2 cohomology that is free over $A(1)$.  Applications include classifying the dual Brown--Gitler spectra after localization with respect to $K$--theory.
\end{abstract}

\maketitle

\section{Introduction} \label{introduction}

Localized at a prime $p$, finite spectra are beautifully organized by chromatic type: $X$ is of type $n$ if $K(n)^*(X) \neq 0$ while $K(n+1)^*(X) = 0$, where $K(n)$ is the $n$th Morava $K$--theory at $p$.  Closed manifolds are all of type 0 or 1, but general theory \cite{hs} tells us that finite spectra of all types abound: any finite type $n$ spectrum $X$ admits a $v_n$-self map $v: \Sigma^d X \ra X$ with cofiber of type $(n+1)$.  

Despite this, it is a bit hard to find explicit examples of spectra of higher height.  In particular, explicit $v_n$--self maps are notoriously difficult to construct, with proofs involving deep dives into either the classical Adams spectral sequence or the Adams-Novikov spectral sequence\footnote{See \cite{prasit and egger} for a recent example.}.  Some higher height examples have also been constructed as retracts of manifolds by using representation theory to construct splitting idempotents\footnote{Steve Mitchell \cite{mitchell} used the representation theory of the finite general linear groups to construct the first spectra of all heights, and a more flexible method using the symmetric groups was introduced by Jeff Smith (see \cite[Appendix C]{ravenel orange book}  or \cite{kuhn lloyd}).}, but such examples are usually quite large. 

In this paper we mine a new source of type 2 complexes.  We find infinite families of type 2 complexes at the prime 2 among the family of spectra $X(n,m)$ recently constructed by the third author \cite{kuhn dl} as the fibers of maps $f(n,m):T(n) \ra T(m)$, where $T(n)$ is the $n$-dual of the $n$th Brown-Gitler spectrum.  Thus, our type 2 complexes are built out of two finite spectra which have been much studied for rather different purposes.

\section{Main Results}

\subsection{A little bit of background}

To state our results, we need to recall a bit of background material; see \secref{HZ/2 section} and \secref{K(n) section} for more detail and references.

We work at the prime 2, and $A$ denotes the mod 2 Steenrod algebra. We let $V^{\vee}$ denote the dual of a vector space over $\Z/2$. Given $n \in \N$, $\alpha_2(n)$ denotes the number of ones in the binary expansion of $n$, and $\nu_2(n)$ denotes the greatest $i$ such that $2^i$ divides $n$.  

We recall a few facts about computing $K(1)^*(X) = KU^\ast(X;\Z/2)$. The Atiyah--Hirzebruch spectral sequence converging to $K(1)^*(X)$ has $E_2 = E_3 = H^*(X;\Z/2[v^{\pm 1}])$ with $|v| = -2$. The formula for the next differential is given by 
$$ d_3(x) = Q_1(x)v,$$
where $Q_1 = Sq^2Sq^1 + Sq^1Sq^2$ is a primitive in $A$ satisfying $Q_1^2=0$.
If $M$ is an $A$--module, let 
$$H(M;Q_1) = \ker \{Q_1: M \ra M\}/\IM \{Q_1: M \ra M\}$$
denote the $Q_1$-Margolis homology of $M$. Letting $H^*(X)$ denote mod 2 cohomology, it follows that $X$ will be $K(1)^*$--acyclic if $H(H^*(X);Q_1) = 0$.  Furthermore, J.\ Palmieri \cite[Cor.A.6]{palmieri} showed that if $M$ is a finite dimensional $A$--module, then $H(M;Q_1) = 0$ if and only if $M$ is free over $A(1)$, the 8 dimensional subalgebra of $A$ generated by $Sq^1$ and $Sq^2$.

We recall a few facts about (dual) Brown--Gitler spectra and their cohomology. Let $J(n) = H^*(T(n))$. Then $J(n)$ is an unstable $A$--module, where  we recall that an $A$--module $M$ is \textit{unstable} if $Sq^i x = 0$ for $i>n$ if $x \in M^n$.  We let $\U$ denote the category of such modules; this is a full subcategory of the abelian category of all left $A$--modules.  Note that $H^*(X) \in \U$ if $X$ is any {\em spacelike} spectrum, i.e.\ a retract of a suspension spectrum.

The modules $J(n)$ are, in fact, $\U$--injectives: $J(n)$ has a unique nonzero top degree class of degree $n$, and the natural map
\begin{equation} \label{defining property of J(n)}
 \Hom_{A}(M,J(n)) \simeq M^{n\vee}
\end{equation}
sending $f: M \ra J(n)$ to $f^n: M^n \ra J(n)^n \simeq \Z/2$ is an isomorphism for $M \in \U$. Computation shows that $J(n)$ has a unique nonzero bottom degree class in degree $\alpha_2(n)$.

The algebraic properties of $J(n)$ are reflected in two properties of $T(n)$:
\begin{itemize}
\item (Brown-Gitler property) $[T(n), X] \ra H_n(X)$ is onto if $X$ is spacelike.
\item $T(n)$ is spacelike.
\end{itemize}
We note that $T(2n+1) = \Sigma T(2n)$, and the first few even examples are $T(0) = S$, $T(2) = \Sinfty \R P^2$, and $T(4) = \Sinfty \R P^4$.

Finally we recall a bit about the results in \cite{kuhn dl}. There, a certain finite spectrum $X(n,m)$ is defined as the fiber of well-chosen map $f(n,m): T(m) \ra T(n)$, so that there is a fibration sequence of spectra
\begin{equation} \label{T(n) fibration} \Sigma^{-1}T(n) \ra X(n,m) \ra T(m) \xra{f(n,m)} T(n).
\end{equation}
By construction, the map $f(n,m)$ induces zero in mod 2 cohomology, and thus (\ref{T(n) fibration}) induces a short exact sequence of $A$--modules
\begin{equation} \label{ses equation} 0 \ra J(m) \ra Q(n,m) \ra \Sigma^{-1}J(n) \ra 0,
\end{equation}
where $Q(n,m) = H^*(X(n,m))$. To be precise, \cite{kuhn dl} works with homology instead of cohomology, and so our left $A$-module $Q(n+r,r)$ is linear dual to the right $A$-module denoted $Q(n,r)$ there; this makes no difference as linear duality defines an antiequivalence between the abelian categories of locally finite left and right $A$-modules.

Note that $Q(n,m)$ can be viewed as an element in $\Ext_A^{1,1}(J(n), J(m))$. The main result in \cite{kuhn dl} is that these induce a known action of the Dyer-Lashof algebra on the bigraded vector space $\Ext_A^{\star,\star}(M,J(*))$, for all $A$--modules $M$, by letting 
$$ Q^r: \Ext_A^{s,s}(M,J(n)) \ra \Ext_A^{s+1,s+1}(M,J(n+r))$$
be the linear map induced by Yoneda splice with $Q(n,n+r)$.  We will need a consequence of this theorem: the short exact sequence (\ref{ses equation}) is split if $m < 2n-1$.

\subsection{The main theorem and consequences}
The goal of this paper is to compute exactly when $Q(n,m)$ is $Q_1$--acyclic, and thus $X(n,m)$ is a type $2$ complex.  

\begin{thm} \label{main thm} (a) Let $m$ and $n$ be even. Then $Q(n,m)$ is $Q_1$--acyclic if and only if $\alpha_2(m) = \alpha_2(n)+1$, $\nu_2(m) = \nu_2(n)$, and $\binom{m-n-2}{n} = 1 \mod 2$. \\

\noindent (b) If $m$ and $n$ are both even, then $Q(n+1,m+1) = \Sigma Q(n,m)$, and so is $Q_1$--acyclic if and only if $Q(n,m)$ is. \\
    
\noindent (c) If $m$ and $n$ are of different parities, then $Q(n,m)$ is not $Q_1$--acyclic.

\end{thm}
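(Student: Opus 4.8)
The principal tool is the long exact sequence in $Q_1$--Margolis homology. Since $Q_1^2=0$ and every map in (\ref{ses equation}) is $A$--linear, hence commutes with $Q_1$, we may regard (\ref{ses equation}) as a short exact sequence of differential $\Z/2$--modules with differential $Q_1$; the resulting exact sequence
\[
\cdots \to H(J(m);Q_1) \to H(Q(n,m);Q_1) \to H(\Sigma^{-1}J(n);Q_1) \xrightarrow{\ \delta\ } H(J(m);Q_1) \to \cdots
\]
wraps around under the internal degree shift $|Q_1|=3$, so that $Q(n,m)$ is $Q_1$--acyclic if and only if the connecting map $\delta$ is an isomorphism. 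Equivalently, writing $E(Q_1)\subseteq A$ for the exterior subalgebra on $Q_1$ and recalling that every finite $E(Q_1)$--module is a sum of free modules and trivial modules $\Sigma^d\Z/2$, the condition is that the restriction of $Q(n,m)$ along $E(Q_1)\hookrightarrow A$ be free. Either way the theorem reduces to two ingredients: the $Q_1$--Margolis homology of the modules $J(k)$, and the behavior of the extension class $Q(n,m)\in\Ext_A^{1,1}(J(n),J(m))$ on it.

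For the first ingredient, I would use the explicit unstable-module structure of the dual Brown--Gitler modules — either the doubling recursion they satisfy or their realization inside tensor powers of $\widetilde H^*(\R P^\infty)$, on which $Q_1(a^j)=j\,a^{j+3}$ — to show that for $k$ even $H(J(k);Q_1)$ is two-dimensional, spanned in the consecutive degrees $h(k)$ and $h(k)+1$ with $h(k)=2\alpha_2(k)+\nu_2(k)-2$; for instance $J(2)=\widetilde H^*(\R P^2)$ and $J(4)=\widetilde H^*(\R P^4)$ give $h(2)=1$ and $h(4)=2$. For $k$ odd one has $J(k)=\Sigma J(k-1)$, hence $H(J(k);Q_1)=\Sigma H(J(k-1);Q_1)$. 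Thus over $E(Q_1)$ one gets $J(k)\big|_{E(Q_1)}\cong(\mathrm{free})\oplus\Sigma^{h(k)}\Z/2\oplus\Sigma^{h(k)+1}\Z/2$ when $k$ is even. Part (b) is now immediate: $T(2j+1)=\Sigma T(2j)$ makes $f(n+1,m+1)$ the suspension of $f(n,m)$ (both realizing the Dyer--Lashof operation $Q^{m-n}$), whence $X(n+1,m+1)\simeq\Sigma X(n,m)$ and $Q(n+1,m+1)=\Sigma Q(n,m)$, and $Q_1$--acyclicity is suspension-invariant.

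For part (a), stripping the free $E(Q_1)$--summands reduces the question to whether the induced extension of $\Sigma^{h(n)-1}\Z/2\oplus\Sigma^{h(n)}\Z/2$ by $\Sigma^{h(m)}\Z/2\oplus\Sigma^{h(m)+1}\Z/2$ over $E(Q_1)$ has free total space. Because a nonsplit extension of a trivial $E(Q_1)$--module by another is a shifted copy of $E(Q_1)$, this holds exactly when (1) the degrees match so that each summand on the right can pair with one on the left — forcing $h(m)=h(n)+2$, i.e.\ $2\alpha_2(m)+\nu_2(m)=2\alpha_2(n)+\nu_2(n)+2$ — and (2) both components of the extension class in $\Ext^1_{E(Q_1)}(\Sigma^{h(n)-1}\Z/2,\Sigma^{h(m)}\Z/2)\oplus\Ext^1_{E(Q_1)}(\Sigma^{h(n)}\Z/2,\Sigma^{h(m)+1}\Z/2)\cong\Z/2\oplus\Z/2$ are nonzero. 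For (2) I would plug in the explicit $A$--module extension realizing $Q^{m-n}$ from \cite{kuhn dl}: its restriction to $E(Q_1)$ is governed by binomial coefficients, so by Lucas' theorem the two components vanish or not according to inclusions of binary digits. What remains is the $2$--adic bookkeeping showing that "(1) and (2)" is equivalent to the stated conjunction $\alpha_2(m)=\alpha_2(n)+1$, $\nu_2(m)=\nu_2(n)$, $\binom{m-n-2}{n}\equiv 1\pmod 2$ — the linear relation in (1) has spurious solutions, and (2) cuts exactly those out. Part (c) is the degenerate case: when $n$ and $m$ have opposite parities, the $h$--formula on the even side against the suspension shift on the odd side makes conditions (1) and (2) jointly unsatisfiable, so $Q(n,m)$ is never $Q_1$--acyclic.

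The main obstacle I anticipate is exactly this pair of explicit inputs: computing $H(J(k);Q_1)$ in general (the formula for $h(k)$), which demands a concrete grip on the dual Brown--Gitler modules, and controlling the $E(Q_1)$--restriction of the extensions of \cite{kuhn dl} finely enough that the two components of the extension class appear as the binomial coefficients packaged in $\binom{m-n-2}{n}$. With those secured, part (a) is finished by elementary combinatorics of binary expansions, and parts (b) and (c) are short.
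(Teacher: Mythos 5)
Your overall strategy—reduce to whether the connecting homomorphism $\delta\colon H(\Sigma^{-1}J(n);Q_1)\to H(J(m);Q_1)$ is an isomorphism, then separately match degrees and check the extension class is nonzero—is the same reduction the paper performs via Lemma~\ref{simplifying lemma} and is sound in outline. However, there are two concrete problems.

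First, the formula $h(k)=2\alpha_2(k)+\nu_2(k)-2$ together with the claim that $H(J(k);Q_1)$ is concentrated in \emph{consecutive} degrees $h(k),h(k)+1$ is wrong. The small examples $J(2)$ and $J(4)$ are misleading: already $H(J(8);Q_1)$ is spanned by $x_2^2$ and $x_0^2x_1^3$ in degrees $2$ and $5$, not $3$ and $4$. The correct statement (Corollary~\ref{m=1 cor}) is that for even $k=2n$ the two classes $x(n)^2$ and $x_1x(n-1)^2$ sit in degrees $2\alpha_2(n)$ and $2\alpha_2(n-1)+1$, which differ by $2\nu_2(n)-1$ and hence are usually far apart. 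This error propagates: you claim the degree-matching condition (1) yields only a single \emph{linear} relation with spurious solutions that condition (2) must cut out. In fact, once you use the correct (non-consecutive, opposite-parity) degrees, the pairing of the four trivial $E(Q_1)$-summands into two shifted copies of $E(Q_1)$ is forced to be the ``even with even, odd with odd'' pairing, and degree-matching alone is \emph{equivalent} to $\alpha_2(m)=\alpha_2(n)+1$ and $\nu_2(m)=\nu_2(n)$, with no spurious solutions. Condition (2) then only needs to supply the binomial coefficient, not also correct condition (1).

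Second, and more seriously, the step ``plug in the explicit $A$-module extension realizing $Q^{m-n}$ \ldots its restriction to $E(Q_1)$ is governed by binomial coefficients, so by Lucas' theorem\ldots'' glosses over the hard part of the theorem. The map $q(n,m)^*\colon J(n)\otimes(t)\to J(m)$ is not given by an explicit cocycle; it is defined implicitly by the $\mathcal U$-injectivity of $J(m)$, i.e.\ only by its effect in top degree $m$, and there is no off-the-shelf binomial formula for its restriction to $E(Q_1)$. Extracting the action on Margolis homology requires a genuine argument: in the paper this is done by first replacing $(t)$ with $F(1)$ (Lemma before \ref{q bar lemma}), then factoring $\bar q(n,m)$ through $J(l)\otimes J(2^k)$ where $m=l+2^k$ (Lemma~\ref{q bar lemma}), reducing everything to the simpler family of maps $p(n,l)^*$, and finally computing $p(2n,2l)^*$ on the two Margolis classes via a permutation-counting argument (Propositions~\ref{prop 1} and~\ref{prop 2}) and repackaging the answer as $\binom{m-n-2}{n}$ (Lemma~\ref{magic lemma}). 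You correctly anticipate that this is ``the main obstacle,'' but without a concrete plan for it—e.g.\ the factorization through $J(l)\otimes J(2^k)$ that makes the implicit map explicit—the proof does not go through. Parts (b) and (c) of your sketch are reasonable in spirit, although (c) also requires identifying the representing cycles correctly and showing $p(n,l)^*$ kills one of them when $n$ and $l$ have opposite parity, which is again nontrivial (Propositions~\ref{even odd prop} and~\ref{odd even prop}).
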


\begin{rem} Recall that $\binom{b}{a} = 1 \mod 2$ exactly when every power of 2 appearing in the binary expansion of $a$ occurs in the binary expansion of $b$. This makes it clear that when $m$ and $n$ are even, $\binom{m-n-2}{n} = \binom{m/2-n/2 -1}{n/2} \mod 2$.
\end{rem}
An important special case was first proved by Brian Thomas \cite{Brian Thomas thesis} and reads as follows.

\begin{cor} \label{thomas cor} If $n$ is even and $2^k > n$, then $Q(n,n+2^k)$ is $Q_1$--acyclic.
\end{cor}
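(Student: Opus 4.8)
The plan is to deduce \corref{thomas cor} as an immediate specialization of part (a) of \thmref{main thm}: one just checks that the family $m = n + 2^k$, with $n$ even and $2^k > n$, meets the three numerical conditions appearing there. Write $m = n + 2^k$. Since $n$ is even and (as we may assume) positive, $2^k > n \geq 2$ forces $k \geq 2$, so $2^k$ is even and hence $m$ is even; thus both $m$ and $n$ are even and part (a) of the theorem applies. It remains only to verify that $\alpha_2(m) = \alpha_2(n) + 1$, that $\nu_2(m) = \nu_2(n)$, and that $\binom{m - n - 2}{n} \equiv 1 \pmod 2$.

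The first two conditions follow at once from the hypothesis $n < 2^k$. This says precisely that the binary expansion of $n$ involves only the bits $2^0, \dots, 2^{k-1}$, which are disjoint from $2^k$; so $n + 2^k$ is obtained from $n$ by turning on a single new bit with no carrying. Hence $\alpha_2(n + 2^k) = \alpha_2(n) + 1$, and (since $n > 0$) the bottom bit of $n$ is untouched, so $\nu_2(n + 2^k) = \nu_2(n)$.

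For the third condition I would use the reformulation recorded in the Remark following \thmref{main thm}: when $m$ and $n$ are even, $\binom{m - n - 2}{n} \equiv \binom{m/2 - n/2 - 1}{n/2} \pmod 2$, which here equals $\binom{2^{k-1} - 1}{n/2}$. Now $2^{k-1} - 1$ has binary expansion consisting of $k-1$ ones, i.e.\ every bit $2^0, \dots, 2^{k-2}$ is set, whereas $n/2 < 2^{k-1}$ has all of its bits among $2^0, \dots, 2^{k-2}$. So every power of $2$ occurring in the expansion of $n/2$ already occurs in that of $2^{k-1} - 1$, and the mod $2$ criterion for binomial coefficients recalled in that Remark gives $\binom{2^{k-1} - 1}{n/2} \equiv 1 \pmod 2$. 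With all three conditions checked, \thmref{main thm}(a) yields that $Q(n, n + 2^k)$ is $Q_1$--acyclic.

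There is essentially no obstacle here beyond this bookkeeping: all of the substance lives in \thmref{main thm}, and the corollary is a transparent consequence of it. The one point that takes a moment is the binomial computation, and even that reduces immediately to the characterization of $\binom{b}{a} \bmod 2$ in the Remark (equivalently, Lucas' theorem). By contrast, a self-contained proof — presumably the route taken originally by Thomas — would instead compute the $Q_1$--Margolis homology of $Q(n, n + 2^k)$ directly from the short exact sequence (\ref{ses equation}), which is considerably more laborious; the purpose of stating the corollary is precisely to record how cleanly it drops out of the main theorem.
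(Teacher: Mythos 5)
Your proof is correct and follows exactly the paper's approach: deduce the corollary from Theorem~\ref{main thm}(a) by checking the three numerical conditions, the only nontrivial one being the binomial coefficient. The paper simply notes directly that $\binom{2^k-2}{n}\equiv 1\pmod 2$ (since $2^k-2$ has every bit $2^1,\dots,2^{k-1}$ set while the even number $n<2^k$ uses only those bits), whereas you pass through the equivalent $\binom{2^{k-1}-1}{n/2}$ via the Remark; this is a cosmetic difference, not a different route.
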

\begin{proof}  In this case the binomial coefficient is $\binom{2^k-2}{n}$, which is $1 \mod 2$.
\end{proof}
For each even $n$, the smallest $m$ such that $Q(n,m)$ is $Q_1$--acyclic is described in the next corollary.
\begin{cor} \label{minimal example}  If $n$ is even, $Q(n,2n+2^{\nu_2(n)})$ is $Q_1$--acylic.
\end{cor}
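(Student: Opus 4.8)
The plan is to deduce this directly from Theorem~\ref{main thm}(a). Set $m = 2n + 2^{\nu_2(n)}$. Since $n$ is even, $m$ is even too, so by Theorem~\ref{main thm}(a) it suffices to verify the three numerical conditions: $\alpha_2(m) = \alpha_2(n) + 1$, $\nu_2(m) = \nu_2(n)$, and $\binom{m - n - 2}{n} \equiv 1 \pmod{2}$. All three will reduce to a single observation, namely that the binary additions that arise are carry-free because the bit of $n$ in position $\nu_2(n)$ is its lowest nonzero bit.

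Write $\nu = \nu_2(n)$ and $n = 2^{\nu} n'$ with $n'$ odd; since $n$ is even, $\nu \geq 1$. Then $m = 2n + 2^{\nu} = 2^{\nu}(2n' + 1)$ with $2n' + 1$ odd, so $\nu_2(m) = \nu = \nu_2(n)$, giving the second condition. For the first, observe that the lowest nonzero binary digit of $2n$ lies in position $\nu + 1$, so position $\nu$ of $2n$ is $0$; hence adding $2^{\nu}$ sets precisely that digit with no carry, and $\alpha_2(m) = \alpha_2(2n) + 1 = \alpha_2(n) + 1$.

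For the binomial condition, compute $m - n - 2 = n + (2^{\nu} - 2)$. Since $0 \leq 2^{\nu} - 2 < 2^{\nu}$, every nonzero binary digit of $2^{\nu} - 2$ lies in a position $< \nu$, whereas every nonzero digit of $n = 2^{\nu} n'$ lies in a position $\geq \nu$; so this sum is carry-free and the binary expansion of $m - n - 2$ contains every power of $2$ occurring in that of $n$. Recalling that $\binom{b}{a}$ is odd exactly when each power of $2$ in the binary expansion of $a$ appears in that of $b$, we get $\binom{m - n - 2}{n} \equiv 1 \pmod{2}$. With all three conditions verified, Theorem~\ref{main thm}(a) yields that $Q(n, 2n + 2^{\nu_2(n)})$ is $Q_1$-acyclic.

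I expect essentially no obstacle here: given Theorem~\ref{main thm}(a) the corollary is a short exercise in binary arithmetic. The only point needing a bit more care is the minimality claimed in the surrounding text. For that, one notes that (by Theorem~\ref{main thm}(c)) any $m$ making $Q(n,m)$ acyclic must be even, that $\binom{m-n-2}{n} \neq 0$ forces $m \geq 2n + 2$, and then checks that among the even integers $2n+2, 2n+4, \dots$ the first whose $2$-adic valuation equals $\nu_2(n)$ is exactly $2n + 2^{\nu_2(n)}$ — the intermediate candidates $2n + 2k$ with $1 \leq k < 2^{\nu - 1}$ all have valuation $1 + \nu_2(k) \leq \nu - 1$ — while at that value the remaining two conditions hold automatically by the same carry-free reasoning used above.
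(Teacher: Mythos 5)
Your proof is correct and takes essentially the same approach as the paper: deducing the result from Theorem~\ref{main thm}(a) by checking the three numerical conditions. The paper's proof is telegraphic, pointing only to the binomial coefficient $\binom{n + (2^{\nu_2(n)}-2)}{n}$ and leaving the $\alpha_2$ and $\nu_2$ verifications (and the Lucas-type argument) to the reader; you have filled in those routine checks explicitly and also added a correct justification of the minimality claim made in the surrounding text.
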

\begin{proof} Here the binomial coefficient is $\binom{n+(2^{\nu_2(n)}-2)}{n}$, which is $1 \mod 2$.
\end{proof}

\begin{ex}
Taking $n = 2$, we find that $Q(2,6)$ is $Q_1$-acyclic. The defining short exact sequence $0 \to J(6) \to Q(2,6) \to \Sigma^{-1}J(2)\to 0$ is pictured in Figure \ref{fig:Q-2-6}, with lines representing the behavior of $Sq^1$, $Sq^2$, and $Sq^4$. Classes are named according to the basis for $J(\star)$ which we recall in \thmref{J(*) calc thm}. Observe in particular that $Q(2,6)$ is isomorphic to $A(1)$ as an $A(1)$-module.
\end{ex}

\begin{figure}[tb]
\centering
\includegraphics[width=\textwidth]{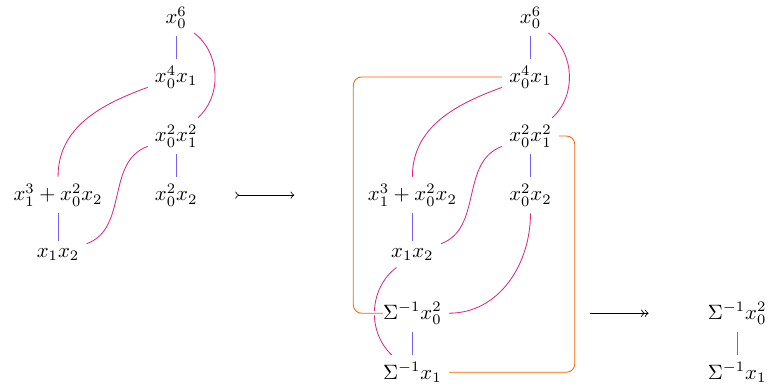}
\caption{The short exact sequence $J(6) \rightarrowtail Q(2,6) \twoheadrightarrow \Sigma^{-1} J(2)$}
\label{fig:Q-2-6}
\end{figure}

\begin{ex}
Taking $n = 4$, we find that $Q(4,12)$ is $Q_1$-acyclic. This $A$-module is pictured in Figure \ref{fig:Q-4-12}, with lines representing the behavior of $Sq^1$, $Sq^2$, $Sq^4$, and $Sq^8$. Observe in particular that $Q(4,12)$ is minimally generated as an $A$-module by the classes
\[
\Sigma^{-1}x_2 \in Q(4,12)^0,\quad \Sigma^{-1}x_0^2x_1 \in Q(4,12)^2,\quad x_0^2 x_1^3 x_2 + x_0^4x_2^2 \in Q(4,12)^6,
\]
and is freely generated as an $A(1)$-module by these classes.

A priori, the generator $\Sigma^{-1}x_0^2x_1$ is only defined modulo $J(12)\subset Q(4,12)$. However, the construction of $Q(n,m)$ gives a map $J(n) \otimes H^\ast(\tilde{P}_{-1}) \to Q(n,m)$, and for $x \in J(n)$ we may unambiguously define $\Sigma^{-1}x \in Q(n,m)$ as the image of $x\otimes t^{-1}$ under this map. We warn that as
\[
Sq^2Sq^1(\Sigma^{-1}x_1^2) = \Sigma^{-1}x_0^4 + x_1^2 x_3,
\]
the basis implicit in Figure \ref{fig:Q-4-12} does not extend any basis for $J(12) \subset Q(4,12)$.
\end{ex}

\begin{figure}[tb]
\centering
\includegraphics[width = \textwidth]{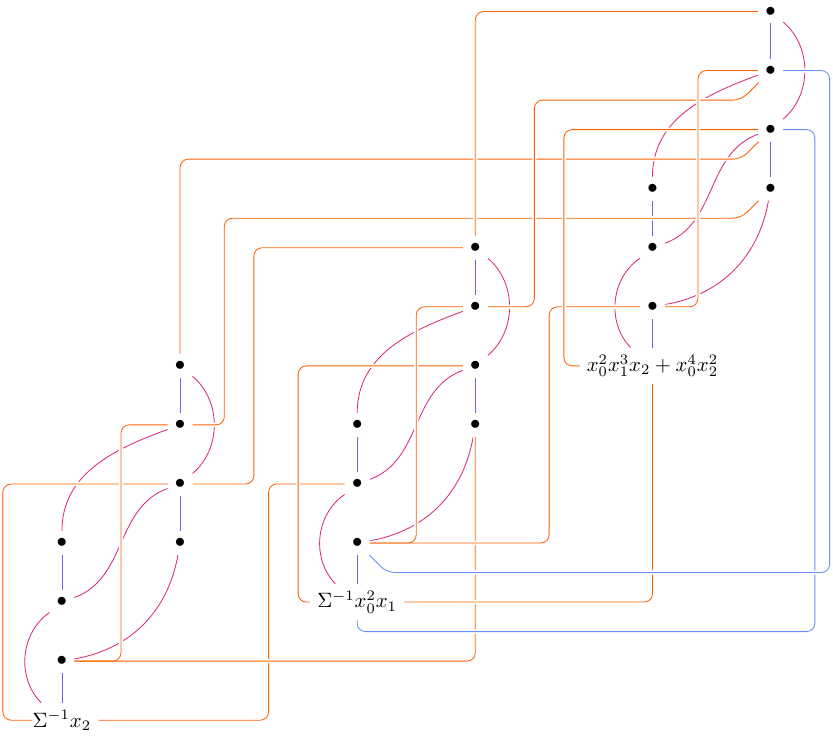}
\caption{The $A$-module $Q(4,12)$}
\label{fig:Q-4-12}
\end{figure}

\begin{ex}
The pair $(n,m) = (10,22)$ satisfies the conditions of \corref{minimal example}, and so $Q(10,22)$ is $Q_1$-acyclic. This $88$-dimensional example is the smallest instance of \thmref{main thm} that does not follow from (the substantially simpler) \corref{thomas cor}.
\end{ex}

From \thmref{main thm}, one immediately deduces the following consequences.

\begin{thm} \label{2nd main thm}  If $(n,m)$ satisfies the conditions in \thmref{main thm}(a), then
\begin{itemize}
\item[(a)] $Q(n,m)$ is a free $A(1)$--module.
\item[(b)] If $M$ is any $A$--module of the form $M = A \otimes_{A(1)} N$, with $N$ an $A(1)$--module, then Yoneda splice with $Q(n,m)$,
$$ Q(n,m) \circ: \Ext_A^{s,t}(M, J(n)) \ra \Ext_A^{s+1,t+1}(M, J(m)),$$
 will be an isomorphism for $s \geq 1$ and an epimorphism for $s=0$.
\item[(c)] $X(n,m)$ is a type 2 complex.
\item[(d)] $f(n,m): T(m) \ra T(n)$ becomes an equivalence after $L_1$--localization.
\end{itemize}
\end{thm}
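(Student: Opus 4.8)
The plan is to extract all four statements from \thmref{main thm}(a) using standard facts about Margolis homology, change of rings over $A(1)$, and the Atiyah--Hirzebruch spectral sequence.

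Part~(a) is immediate: $Q(n,m) = H^*(X(n,m))$ is finite dimensional, and by \thmref{main thm}(a) its $Q_1$-Margolis homology vanishes, so Palmieri's theorem makes it free over $A(1)$. For part~(b) I would transport the computation to $A(1)$. Because $A$ is free as a right $A(1)$-module, the functor $A \otimes_{A(1)} (-)$ is exact and takes projectives to projectives, so there is a natural isomorphism
\[
\Ext_A^{s,t}(A \otimes_{A(1)} N, L) \;\cong\; \Ext_{A(1)}^{s,t}(N, L|_{A(1)})
\]
for every $A$-module $L$, compatible with Yoneda composition in the variable $L$. Restricting the short exact sequence (\ref{ses equation}) along $A(1) \hra A$ and running the long exact $\Ext_{A(1)}(N, -)$-sequence, the connecting homomorphism becomes, in the grading conventions of the theorem, precisely $Q(n,m) \circ (-)$. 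By part~(a) the middle term $Q(n,m)$ is $A(1)$-free, hence $A(1)$-injective since $A(1)$ is a finite-dimensional self-injective algebra; thus $\Ext_{A(1)}^{\geq 1}(N, Q(n,m)) = 0$ for all $N$, and the long exact sequence forces $Q(n,m) \circ (-)$ to be an isomorphism for $s \geq 1$ and an epimorphism for $s = 0$.

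Part~(c) has an easy half and a hard half. For the easy half, the Atiyah--Hirzebruch spectral sequence converging to $K(1)^*(X(n,m))$ has $E_3 = H^*(X(n,m);\Z/2)[v^{\pm 1}]$ with $d_3 = Q_1 \cdot v$, so $E_4 = H(Q(n,m);Q_1)[v^{\pm 1}] = 0$ by \thmref{main thm}(a) and hence $K(1)^*(X(n,m)) = 0$. Since $X(n,m)$ is finite and non-contractible it has a well-defined finite chromatic type, and since for a finite spectrum the set of heights $m$ with $K(m)^*(X) \neq 0$ is upward closed, the vanishing of $K(1)^*$ also forces $K(0)^*(X(n,m)) = 0$; thus $X(n,m)$ has type $\geq 2$. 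The hard half is to show the type is exactly $2$, i.e.\ $K(2)^*(X(n,m)) \neq 0$, and I expect this to be the main obstacle. The natural approach is to run the Atiyah--Hirzebruch spectral sequence for $K(2)^*$, whose leading differential is $Q_2 \cdot v_2$: one must show that $Q(n,m)$ has nonzero $Q_2$-Margolis homology (a purely algebraic computation using the explicit bases for the $J(\ast)$ recalled in \thmref{J(*) calc thm}, together with the $A(1)$-freeness from part~(a)) and that no later differential can destroy it --- which is automatic when $Q(n,m)$ is narrow, as in the small examples, but requires a separate argument in general. An alternative route feeds the fibration (\ref{T(n) fibration}) into $K(2)^*$ using that $T(n)$ and $T(m)$, being finite and spacelike, are of type $\leq 1$ and hence not $K(2)$-acyclic; the sticking point there is that one must rule out $f(n,m)$ being a $K(2)$-equivalence, for which I would argue that $f(n,m)$ --- a $K(1)$-equivalence of positive Adams filtration --- behaves like a $v_1$-periodic operator and so acts nilpotently on $K(2)$.

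Finally, part~(d) is formal. Since $X(n,m)$ is $K(0)^*$- and $K(1)^*$-acyclic it is $E(1)^*$-acyclic, so $L_1 X(n,m) = 0$; applying $L_1$ to the fibration $X(n,m) \ra T(m) \xra{f(n,m)} T(n)$ then shows $L_1 f(n,m)$ is an equivalence, and in particular $L_1 T(m) \simeq L_1 T(n)$, which is the input for the $K$-theoretic classification of dual Brown--Gitler spectra mentioned in the introduction.
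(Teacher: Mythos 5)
Your treatment of parts (a), (b), and (d) is correct and matches the paper's intent. Part (a) is just Palmieri's theorem applied to \thmref{main thm}(a); your change-of-rings argument for (b), reducing to $A(1)$ and using that $A(1)$-free implies $A(1)$-injective, is exactly the right reason; and part (d) is the formal observation that $X(n,m)$ being $K(0)^*$- and $K(1)^*$-acyclic makes it $L_1$-acyclic.

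The gap is in the ``hard half'' of part (c): showing $K(2)^*(X(n,m)) \neq 0$. You correctly identify this as the main obstacle, but neither of your two sketches closes it. Your first route would require you to exhibit a nonzero class in $H(Q(n,m); Q_2)$ \emph{and} rule out higher Atiyah--Hirzebruch differentials, neither of which you carry out. Your second route is based on the heuristic that ``a $K(1)$-equivalence of positive Adams filtration behaves like a $v_1$-periodic operator and so acts nilpotently on $K(2)$,'' but that reasoning invokes the nilpotence theorem in a setting where it doesn't apply: $f(n,m): T(m) \to T(n)$ is a map between two \emph{different} spectra, not a self-map, so there is no iterated composite whose nilpotence one can appeal to.

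The paper resolves this much more directly. In \S\ref{K(n) section} it computes the dimension $k_{2,n}$ of $K(2)^*(T(n))$ over $K(2)^*$ for all $n$, via \propref{Qm homology prop} and the collapse result \propref{collapse prop}, finding $k_{2,2n} = k_{2,2n+1} = 2n$ (\corref{m=2 cor}). Since \thmref{main thm}(a) requires $m > n$ (indeed $m \geq 2n$), the ranks $k_{2,m}$ and $k_{2,n}$ differ, so $f(n,m)$ cannot be a $K(2)^*$-isomorphism and hence $X(n,m)$ is not $K(2)^*$-acyclic (\corref{X(n,m) not K2 acyclic cor}). That dimension count is the piece you were missing; it bypasses any analysis of higher differentials in the spectral sequence for $X(n,m)$ and any nilpotence argument.
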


Here $L_1$ is localization with respect to complex $K$--theory, localized at 2.  For statement (c), we need to know that $X(n,m)$ is not $K(2)^*$--acyclic, or, equivalently, $f(n,m)$ does not induce an isomorphism on $K(2)^*$, proved as \corref{X(n,m) not K2 acyclic cor}.  

\begin{exs} Important $A$--modules $M$ as in \thmref{2nd main thm}(b) include $H^*(ku) = A \otimes_{A(1)} (A(1) \otimes_{E(1)} \Z/2)$ and $H^*(ko) = A \otimes_{A(1)} \Z/2$.  Here $E(1)$ is the subalgebra of $A$ generated by $Sq^1$ and $Q_1$.
\end{exs}

\thmref{2nd main thm}(d) will let us classify the spectra $T(n)$ after $L_1$--localization.

\begin{thm} \label{K theory thm} (a) If $n$ is even and $i = \nu_2(n)$ then there is an $L_1$--equivalence $f:T(n) \ra T(2^i)$ of Adams filtration $\alpha_2(n)-1$.  \\

\noindent(b) There are maps $\alpha: T(2^{i-1}) \ra T(2^i)$ and $\beta: T(2^i) \ra \Sigma^2T(2)$ such that 
$$ L_1T(2^{i-1}) \xra{L_1\alpha} L_1T(2^i) \xra{L_1\beta} \Sigma^2L_1T(2)$$
is a cofibration sequence of $L_1$--local spectra. \\

\noindent(c) $K^0(T(2^i))_{(2)} \simeq \Z/2^i$, $K^1(T(2^i))_{(2)} \simeq 0$, and, on $K^0$, the cofibration sequence of part (b) induces the short exact sequence
$$ 0 \ra \Z/2 \ra \Z/2^i \ra \Z/2^{i-1} \ra 0.$$
\end{thm}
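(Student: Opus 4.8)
The plan is to leverage \thmref{2nd main thm}(d) --- which supplies maps $f(n,m)$ that are $L_1$--equivalences --- together with the fibration sequences (\ref{T(n) fibration}), which double as cofiber sequences among the $T(m)$; all three parts then reduce to identifying various $T(m)$ and $X(n,m)$ after $L_1$--localization.

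\textbf{Part (a).} Write $n = 2^{a_1} + \dots + 2^{a_r}$ in binary with $a_1 > \dots > a_r = \nu_2(n) = i$ and $r = \alpha_2(n)$, and set $n_0 = n$, $n_j = n_{j-1} - 2^{a_j}$. Then $\nu_2(n_j) = i$ for all $j$, $n_{r-1} = 2^i$, and $n_{j-1} = n_j + 2^{a_j}$ with $2^{a_j} \geq 2^{a_{j+1}+1} > n_j$; so \corref{thomas cor} and \thmref{2nd main thm}(d) make each $f(n_j, n_{j-1}) \colon T(n_{j-1}) \to T(n_j)$ an $L_1$--equivalence. Their composite $f \colon T(n) \to T(2^i)$ is an $L_1$--equivalence built from $\alpha_2(n) - 1$ maps, each zero on mod $2$ cohomology (hence of Adams filtration $\geq 1$) and detected by the nonzero class $Q(n_j, n_{j-1}) \in \Ext_A^{1,1}$; thus $f$ has Adams filtration $\geq \alpha_2(n) - 1$, with equality once one checks the Yoneda product of these classes is nonzero in $\Ext_A^{\alpha_2(n)-1,\alpha_2(n)-1}(J(2^i), J(n))$ --- which is read off from the Dyer--Lashof module structure of \cite{kuhn dl}.

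\textbf{Part (b).} For $i = 2$ this is the skeletal cofiber sequence $T(2) = \Sinfty \R P^2 \to \Sinfty \R P^4 = T(4) \to \Sinfty(\R P^4/\R P^2) \cong \Sigma^2\Sinfty \R P^2 = \Sigma^2 T(2)$, and no localization is needed. For general $i$ I would take (\ref{T(n) fibration}) with $n = 2^i$ and a suitably chosen $m$ with $\nu_2(m) = i-1$, rewritten as the cofiber sequence $T(m) \xra{f(2^i,m)} T(2^i) \xra{\delta} \Sigma X(2^i,m)$. Part (a) gives $T(m) \simeq_{L_1} T(2^{i-1})$, so it remains to show $X(2^i,m) \simeq_{L_1} \Sigma T(2)$: for this I would compute $H(Q(2^i,m); Q_1)$, hence $K(1)^*(X(2^i,m))$, from the short exact sequence (\ref{ses equation}) and the $Q_1$--Margolis homology of the Brown--Gitler modules $J(2^i)$ and $J(m)$ --- the computation underlying \thmref{main thm} --- check that it matches $\Sigma T(2)$, and then use the Brown--Gitler property together with spacelikeness to realize the isomorphism by an honest map. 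Precomposing $f(2^i,m)$ with the $L_1$--equivalence $T(2^{i-1}) \to T(m)$ and postcomposing $\delta$ with the $L_1$--equivalence $\Sigma X(2^i,m) \to \Sigma^2 T(2)$ yields $\alpha$ and $\beta$. (Note $\Sigma^2$ is invisible to $KU$, so $\Sigma^2 T(2) \simeq_{L_1} T(2)$ has $K$--theory $\Z/2$; the $\Sigma^2$ is only there so that the unlocalized maps $\alpha$, $\beta$ exist.)

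\textbf{Part (c).} Induct on $i$, the base case being $K^0(\Sinfty \R P^2)_{(2)} = \Z/2$, $K^1(\Sinfty \R P^2)_{(2)} = 0$. Applying $K^*(-)_{(2)}$ to the cofiber sequence of (b): since $K^0(\Sigma^2 T(2))_{(2)} = K^0(T(2))_{(2)} = \Z/2$ and $K^1(\Sigma^2 T(2))_{(2)} = 0$ by Bott, and $K^1(T(2^{i-1}))_{(2)} = 0$ by induction, the long exact sequence collapses to $K^1(T(2^i))_{(2)} = 0$ and a short exact sequence $0 \to \Z/2 \to K^0(T(2^i))_{(2)} \to \Z/2^{i-1} \to 0$ (which is the last assertion of (c)). To resolve this extension it suffices to show $K^0(T(2^i))_{(2)}$ is cyclic. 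I would deduce this from the Atiyah--Hirzebruch spectral sequence for $K(1)^*(T(2^i)) = K^*(T(2^i); \Z/2)$: its $E_4$--page is $H(J(2^i); Q_1)[v^{\pm 1}]$, which is two-dimensional over $\Z/2[v^{\pm 1}]$ with a generator in each parity, and since $T(2^i)$ is of type $1$ (so $K(1)^*(T(2^i)) \neq 0$) there can be no parity-reversing higher differentials; the Brown--Gitler computation then gives $H(J(2^i); Q_1)$ one-dimensional in each parity (as for $J(2)$ and $J(4)$), so $K(1)^0(T(2^i))$ is one-dimensional, and universal coefficients together with $K^1(T(2^i))_{(2)} = 0$ force $K^0(T(2^i))_{(2)}$ to be cyclic, hence $\cong \Z/2^i$.

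\textbf{Main obstacle.} The crux is Part (b): producing the honest cofiber sequence, and in particular proving that for the right $m$ the term $X(2^i, m)$ becomes exactly $\Sigma^2 T(2)$ after $L_1$--localization. This is a delicate $Q_1$--Margolis homology computation on $J(2^i)$ and $J(m)$, followed by the separate task of promoting the resulting abstract $K(1)^*$--isomorphism to a map of spectra; the input ``$H(J(2^i); Q_1)$ is one-dimensional in each parity'' used in Part (c) should fall out of the same analysis.
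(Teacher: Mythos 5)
Parts (a) and (c) of your proposal match the paper's proofs in substance. Part (a) uses the same factorization of $f$ as a composite of $L_1$--equivalences indexed by the binary digits of $n$; the paper builds up from $2^i$ while you peel down from $n$, but the chain of $T(n_j)$'s is identical. (Both the paper and you only actually establish Adams filtration $\geq \alpha_2(n)-1$; the paper does not carry out an equality check either.) Part (c) is also the same: the base case $T(2) = \Sinfty \R P^2$, the collapse of the long exact sequence, and cyclicity of $K^0(T(2^i))_{(2)}$ deduced from the two--dimensionality of $K(1)^*(T(2^i))$ via the universal coefficient sequence.

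The gap is in Part (b). Your plan is to take the cofiber sequence $T(m) \xra{f(2^i,m)} T(2^i) \ra \Sigma X(2^i,m)$ and then precompose with an $L_1$--equivalence $T(2^{i-1}) \ra T(m)$. But Part (a) produces $L_1$--equivalences in the direction $T(m) \ra T(2^{i-1})$ --- from the larger index toward the smaller power of two --- so there is no honest map $T(2^{i-1}) \ra T(m)$ available; you would only be able to invert it after localizing, which does not yield the unlocalized $\alpha$ the theorem calls for. Setting $m = 2^{i-1}$ to avoid the precomposition also fails: then $2^{i-1} < 2\cdot 2^i - 1$, the extension $0 \ra J(2^{i-1}) \ra Q(2^i,2^{i-1}) \ra \Sigma^{-1}J(2^i) \ra 0$ splits, $H(Q(2^i,2^{i-1});Q_1)$ is $4$--dimensional, and $X(2^i,2^{i-1})$ is not $L_1$--equivalent to $\Sigma T(2)$. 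More fundamentally, every $f(n,m)$ induces zero on mod~$2$ cohomology, whereas the desired $\alpha: T(2^{i-1}) \ra T(2^i)$ --- the one you yourself identify for $i=2$ as the skeletal inclusion $\R P^2 \hra \R P^4$ --- is nonzero in cohomology (it is dual to $\cdot Sq^{2^{i-1}}$), so it cannot come from the $f(n,m)$ construction. What you are missing is the geometric Mahowald cofibration sequence $T(n) \ra T(2n) \ra \Sigma T(2n-1)$, recalled at the end of \secref{HZ/2 section} and realized by \cite{cmm78}. With $n = 2^{i-1}$ this gives $T(2^{i-1}) \xra{\alpha} T(2^i) \xra{\beta'} \Sigma T(2^i-1) \simeq \Sigma^2 T(2^i-2)$; since $\nu_2(2^i - 2) = 1$, Part (a) supplies an $L_1$--equivalence $f: T(2^i-2) \ra T(2)$ going the \emph{right} direction to postcompose, and $\beta = \Sigma^2 f \circ \beta'$. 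This also eliminates your final step of having to promote an abstract $K(1)^*$--isomorphism to a map of spectra.
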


\subsection{First steps in the proof of \thmref{main thm}}

We need to say a bit more about the construction of $f(n,m): T(m) \ra T(n)$ from \cite{kuhn dl}.  Let $P_1= \Sigma^{\infty}\R P^{\infty}$, so that $H^*(P_1) = (t) \subset \Z/2[t] = H^\ast(\R P^\infty)$. Then let $\tilde P_{-1}$ denote the fiber of the Kahn-Priddy map $\tau:P_1 \ra S$, so there is a fibration sequence of spectra
\begin{equation} \label{P fibration} S^{-1} \ra \tilde P_{-1} \ra P_1 \xra{\tau} S.
\end{equation}
Smashing this sequence with $T(n)$ yields a fibration sequence of spectra
\begin{equation} \label{TP fibration} \Sigma^{-1}T(n) \ra T(n) \sm \tilde P_{-1} \ra T(n)\sm P_1 \xra{1 \sm \tau} T(n)
\end{equation}
inducing a short exact sequence of $A$--modules
\begin{equation} \label{P ses equation } 0 \ra J(n) \otimes (t) \ra J(n) \otimes H^*(\tilde P_{-1}) \ra \Sigma^{-1}J(n) \ra 0.
\end{equation}

A well-chosen $A$--module map $q(n,m)^*: J(n) \otimes (t) \ra J(m) $ is realized by a map $q(n,m): T(m) \ra T(n) \sm P_1$, and $f(n,m)$ is defined as the composite $T(m) \xra{q(n,m)} T(n) \sm P_1 \xra{1 \sm \tau} T(n)$. It follows that there is a map of fibration sequences 
\begin{equation*}
\SelectTips{cm}{}
\xymatrix{
\Sigma^{-1} T(n)  \ar[r] & T(n) \sm \widetilde P_{-1}   \ar[r] & T(n) \sm P_1   \ar[r]^{1 \sm t} & T(n)\\
\Sigma^{-1} T(n)  \ar[r] \ar@{=}[u]& X(n,r) \ar[r] \ar[u] & T(m) \ar[u]_{q(n,m)} \ar[r]^{f(n,m)} & T(n) \ar@{=}[u]}
\end{equation*}
inducing a map of $A$--module extensions
\begin{equation} \label{ses diagram}
\SelectTips{cm}{}
\xymatrix{
0  \ar[r] & J(n) \otimes (t) \ar[d]^{q(n,m)^*}  \ar[r]& J(n) \otimes H^*(\tilde P_{-1}) \ar[d]  \ar[r]&  \Sigma^{-1} J(n) \ar@{=}[d] \ar[r] & 0 \\
0  \ar[r] & J(m)   \ar[r]& Q(n,m) \ar[r]&  \Sigma^{-1} J(n)  \ar[r] & 0. }
\end{equation}
It is easy to check -- see \exref{tilde P example} -- that $H^*(\tilde P_{-1})$ is $Q_1$--acyclic, and then that diagram chasing yields the next lemma.

\begin{lem} \label{simplifying lemma} The $A$-module $Q(n,m)$ is $Q_1$--acyclic if and only if $q(n,m)^*$ is a $Q_1$--isomorphism.
\end{lem}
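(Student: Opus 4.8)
The plan is to leverage the map of short exact sequences \eqref{ses diagram} together with the fact, to be recorded as \exref{tilde P example}, that $H^\ast(\tilde P_{-1})$ is $Q_1$--acyclic. The key structural input is that $Q_1$--Margolis homology $H(-;Q_1)$ is a homological functor: since $Q_1$ is primitive with $Q_1^2 = 0$, for any short exact sequence $0 \to M' \to M \to M'' \to 0$ of $A$--modules there is a long exact sequence $\cdots \to H(M';Q_1) \to H(M;Q_1) \to H(M'';Q_1) \xra{\partial} H(M';Q_1) \to \cdots$, where the connecting map $\partial$ raises degree by $|Q_1| = 3$ (this is the standard "cofiber of $Q_1$" argument, or can be seen by regarding $M$ as a complex over the exterior algebra $E(Q_1)$). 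First I would apply this to both rows of \eqref{ses diagram}.

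For the top row $0 \to J(n)\otimes(t) \to J(n)\otimes H^\ast(\tilde P_{-1}) \to \Sigma^{-1}J(n) \to 0$: since $H^\ast(\tilde P_{-1})$ is $Q_1$--acyclic and $Q_1$--Margolis homology commutes with tensor products over $\Z/2$ (a Künneth formula for the exterior algebra $E(Q_1)$, using that $Q_1$ acts on a tensor product by the Cartan-type formula $Q_1(x\otimes y) = Q_1 x \otimes y + x \otimes Q_1 y$), we get $H(J(n)\otimes H^\ast(\tilde P_{-1}); Q_1) = H(J(n);Q_1)\otimes H(H^\ast(\tilde P_{-1});Q_1) = 0$. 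Hence the long exact sequence for the top row collapses to give a natural isomorphism $\partial\colon H(\Sigma^{-1}J(n);Q_1) \xra{\ \sim\ } H(J(n)\otimes(t);Q_1)$, shifting degree by $3$. For the bottom row $0 \to J(m) \to Q(n,m) \to \Sigma^{-1}J(n) \to 0$ we get a long exact sequence relating $H(J(m);Q_1)$, $H(Q(n,m);Q_1)$, and $H(\Sigma^{-1}J(n);Q_1)$. The vertical maps in \eqref{ses diagram} induce a map of long exact sequences, and the right-hand vertical map is the identity on $\Sigma^{-1}J(n)$.

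The conclusion then follows by a diagram chase. From the map of long exact sequences, $H(Q(n,m);Q_1) = 0$ forces the bottom connecting map $H(\Sigma^{-1}J(n);Q_1) \to H(J(m);Q_1)$ to be an isomorphism; comparing with the top row, where the analogous connecting map $H(\Sigma^{-1}J(n);Q_1) \to H(J(n)\otimes(t);Q_1)$ is an isomorphism, and using that these connecting maps are intertwined by $(q(n,m)^\ast)_\ast\colon H(J(n)\otimes(t);Q_1) \to H(J(m);Q_1)$, we deduce that $(q(n,m)^\ast)_\ast$ is an isomorphism, i.e.\ $q(n,m)^\ast$ is a $Q_1$--isomorphism. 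Conversely, if $q(n,m)^\ast$ is a $Q_1$--isomorphism, then both connecting maps out of $H(\Sigma^{-1}J(n);Q_1)$ being compatible isomorphisms forces the maps $H(J(m);Q_1)\to H(Q(n,m);Q_1)$ and $H(Q(n,m);Q_1)\to H(\Sigma^{-1}J(n);Q_1)$ in the bottom long exact sequence to be zero and injective respectively, whence $H(Q(n,m);Q_1) = 0$.

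I expect the only real subtlety to be bookkeeping: making sure the Künneth isomorphism for $Q_1$--Margolis homology and the compatibility of connecting homomorphisms under the map of extensions \eqref{ses diagram} are invoked correctly, and tracking the degree shift by $3$ so that the statement "$q(n,m)^\ast$ is a $Q_1$--isomorphism" is interpreted with the correct internal grading. The homological algebra itself is entirely formal once one observes that $H(-;Q_1)$ is the homology of a functor to chain complexes over $E(Q_1)$; no input about the specific structure of $J(n)$, $J(m)$, or $Q(n,m)$ is needed for this lemma — that analysis is deferred to the proof of \thmref{main thm} proper.
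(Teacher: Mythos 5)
Your proposal is correct and takes essentially the same approach as the paper, which (in \exref{tilde P example}) observes that $H^*(\tilde P_{-1})$ is $Q_1$--acyclic, invokes the K\"unneth theorem for $Q_1$--Margolis homology to conclude $J(n)\otimes H^*(\tilde P_{-1})$ is $Q_1$--acyclic, and then applies the 5-lemma to the map of $Q_1$--homology long exact sequences arising from diagram \eqref{ses diagram}. Your explicit diagram chase through the connecting homomorphisms is precisely what the 5-lemma encapsulates, so the arguments coincide.
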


Thus, the proof of \thmref{main thm} amounts to analyzing exactly when 
$$ q(n,m)^*: H(J(n) \otimes (t); Q_1) \ra H(J(m); Q_1)$$
is an isomorphism.

The good news here is that the domain and range are always 2-dimensional, with classes represented by explicit elements in $J(n) \otimes (t)$ and $J(m)$.

Less good, and the cause of most of the work in this paper, is that the $A$--module map $q(n,m)^*: J(n) \otimes (t) \ra J(m)$ is defined implicitly using (\ref{defining property of J(n)}), and thus is only explicitly described in degree $m$.

\subsection{Organization of the rest of the paper}
In \secref{HZ/2 section} we recall basic properties of the dual Brown-Gitler modules; in particular, the elegant description of $ \bigoplus_{n=0}^{\infty}J(n)$, viewed as an algebra in $\U$. This allows us to precisely define our key $A$--module maps 
$$q(n,m)^*: J(n) \otimes (t) \ra J(m),$$
as well as another family of $A$--module maps 
$$p(n,l)^*: J(n) \ra J(l).$$ 
Like $q(n,m)^*$, the map $p(n,l)^*$ can also be geometrically realized by a map $p(n,l): T(l) \ra T(n)$, and we let $Y(n,l)$ denote the fiber.  

Then in \secref{K(n) section} we use the description of $\bigoplus_{n=0}^{\infty}J(n)$ to recover implicitly known calculations of $K(m)^*(T(n))$ for all $n$ and $m$, calculating the Margolis homology groups $H(J(n); Q_m)$ enroute.  Also proved here will be \lemref{simplifying lemma}.

We prove \thmref{main thm} in the three sections that follow this.

In \secref{main proof part 1}, we first recall results from \cite{kuhn dl} allowing us to conclude that if $q(n,m)^*$ is a $Q_1$--isomorphism, then necessarily $m\geq 2n$.  When this condition holds, we show that $q(n,m)^*$ is a $Q_1$--isomorphism if and only if $p(n,l)^*$ is a $Q_1$--isomorphism, where $m=l+2^k$ with $0 \leq l < 2^k$.  As $p(n,n)^*: J(n) \ra J(n)$ is the identity, this is already sufficient to deduce \corref{thomas cor}.

In  \secref{main proof part 2}, we focus on pairs $(n,l)$ when $n$ and $l$ are even, and first show that $H(J(n); Q_1) \simeq  H(J(l); Q_1)$ as graded vector spaces if and only if $\alpha_2(l) = \alpha_2(n)$ and $\nu_2(l) = \nu_2(n)$.  Assuming these, we then answer the much more delicate question of when $p(n,l)^*$ is a $Q_1$--isomorphism, leading to a technical condition involving the binary expansions of $n$ and $l$: see \thmref{p(2n,2l) iso thm}.  

Note that, if $m=l+2^k$ with $0 \leq l < 2^k$, $\alpha_2(l) = \alpha_2(n)  \Leftrightarrow \alpha_2(m) = \alpha_2(n)+1$ and $\nu_2(l) = \nu_2(n) \Leftrightarrow \nu_2(m) = \nu_2(n)$.  A combinatorial lemma, \lemref{magic lemma}, then shows that our technical condition on the pair $(n,l)$ is equivalent to the statement that $\binom{m-n-2}{n} = 1 \mod 2$, completing the proof of  \thmref{main thm}(a).

In \secref{main proof part 3},  we use similar methods to prove \thmref{main thm}(c):  $p(n,l)^*$ is never a $Q_1$--isomorphism when $n$ and $l$ are of different parities.

In the final section \secref{applications} we first prove \thmref{K theory thm} and then discuss how our work fits with the thesis of Brian Thomas \cite{Brian Thomas thesis}.  

\subsection{Acknowledgments}  This has been a collaborative research project that began at the summer 2022 research workshop funded by N.S.F. Focused Research Grant 1839968.  Our main results were presented in the Collaborative Team Conference held at the University of Virginia in December, 2023. 

Our work builds on the 2019 University of Virginia Ph.D.~ thesis work of Brian Thomas \cite{Brian Thomas thesis}, working under the supervision of the third author. In particular, knowing \corref{thomas cor} was true was useful, even though our proof of this is ultimately different.  

\section{The mod 2 cohomology of $T(n)$} \label{HZ/2 section}

\subsection{The $A$--module $H^*(\R P^{\infty})$ and related modules}

Recall that, as an unstable $A$--algebra $H^*(\R P^{\infty}) \simeq \Z/2[t]$, with $Sq^it^j = \binom{j}{i}t^{j+i}$.

For $d \in \Z$, we let $P_d$ be the Thom spectrum of $d$ copies of the canonical line bundle over $\R P^{\infty}$.  Then $P_0 = \Sinfty \R P^{\infty}_+$ with cohomology $\Z/2[t]$ as just described and  $P_1 = \Sinfty \R P^{\infty}$ with cohomology equal to the ideal $(t)$. For general $d$, $H^*(P_d)$ has basis given by $t^j$ for $j \geq d$ with the formula $Sq^it^j = \binom{j}{i}t^{j+i}$ still applying\footnote{For $j$ possibly negative, $\binom{j}{i}$ is defined by the identity $(x+1)^j = \sum_{i=0}^{\infty} \binom{j}{i}x^i$}. In particular $Sq^it^{-1} = t^{i-1}$ for all $i$.

For $d \leq 0$, let $\tilde P_d$ be the fiber of the composite $P_d \ra P_0 \xra{\pi} S$, where $\pi$ is the standard retraction.  Then $\tilde P_0 \simeq P_1$, and the cofibration sequence
$$ S^{-1} \ra P_{-1} \ra P_0 \xra{tr} S,$$
with $tr$ the $C_2$--transfer, induces the cofibration sequence
$$ S^{-1} \ra \tilde P_{-1} \ra P_1 \xra{\tau} S$$
of the introduction.  The first terms of this induce a short exact sequence of $A$--modules
\begin{equation} \label{basic P ses} 0 \ra (t) \ra H^*(\tilde P_{-1}) \ra \Sigma^{-1} \Z/2 \ra 0.
\end{equation}

\subsection{Projectives and injectives in $\U$}
A good general reference for much of the algebra here is \cite[Chapter 2]{schwartz book}. 

Let $F(m) \in \U$ be the free unstable module on a class of degree $m$. There is a natural isomorphism
\begin{equation} \label{defining property of F(m)}
 \Hom_{A}(F(m),N) \simeq N^m
\end{equation}
for all unstable modules $N$.
Particularly important for us is the explicit structure of $F(1)$: it is the sub-$A$--module of $\Z/2[t] = H^*(\R P^{\infty})$ spanned by the elements $t^{2^k}$ with $k \geq 0$.  One has $Sq^{2^k} t^{2^k} = t^{2^{k+1}}$ with all other nonidentity Steenrod operations acting as 0.

Dually, the dual Brown-Gitler module $J(n)$ is the unstable module such that there is a natural isomorphisms
\begin{equation} \label{defining property of J(n) again}
 \Hom_{A}(M,J(n)) \simeq  M^{n\vee}
\end{equation}
for all unstable modules $M$. 

Combining (\ref{defining property of F(m)}) and (\ref{defining property of J(n) again}) shows that $J(n)^m \simeq F(m)^{n\vee}$.  From this, one sees that $J(n)^n \simeq \Z/2$ and $J(n)^m = 0$ for $m \geq n$.  One also sees that $J(n)^1 \simeq \Z/2$ if $n$ is a power of 2 and is 0 otherwise.  

\begin{notation}  For $k\geq 0$, let $x_k$ be the nonzero element of $J(2^k)^1$ and let $p_k: F(1) \ra J(2^k)$ be the unique nonzero $A$--module map.  Note that $p_k(t) = x_k$.
\end{notation} 

Using (\ref{defining property of J(n) again}), the natural transformations $Sq^i: M^n \ra M^{n+i}$ induce corresponding  maps of left $A$--modules $ \cdot Sq^i: J(n+i) \ra J(n)$.  

\begin{lem} \label{Sq2^k lemma}  $\cdot Sq^{2^k}: J(2^{k+1}) \ra J(2^k)$ maps $x_{k+1}$ to $x_k$.
\end{lem}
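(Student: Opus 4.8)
The plan is to isolate the only real content, namely that $\cdot Sq^{2^k}$ is nonzero in degree $1$; everything else is formal. First I would note that $\cdot Sq^{2^k}: J(2^{k+1}) \to J(2^k)$ is degree-preserving: by (\ref{defining property of J(n) again}) and the Yoneda lemma it corresponds to a morphism of $A$-modules $J(2^{k+1}) \to J(2^k)$, so it restricts to a linear map $J(2^{k+1})^1 \to J(2^k)^1$. Since $2^k$ and $2^{k+1}$ are powers of $2$, both of these groups are one-dimensional, spanned by $x_{k+1}$ and $x_k$ respectively, so it suffices to show this restriction is nonzero.

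To see the nonvanishing I would evaluate everything on $F(1)$, using its two universal properties. On one hand, (\ref{defining property of F(m)}) identifies $\Hom_A(F(1), J(n))$ with $J(n)^1$ via $f \mapsto f(t)$, naturally in $J(n)$, and under this identification composition with $\cdot Sq^{2^k}$ is exactly the restriction $\cdot Sq^{2^k}: J(2^{k+1})^1 \to J(2^k)^1$. On the other hand, by definition $\cdot Sq^{2^k}$ is the morphism obtained by dualizing the natural transformation $Sq^{2^k}: (-)^{2^k} \to (-)^{2^{k+1}}$ through the isomorphism (\ref{defining property of J(n) again}); so if one instead identifies $\Hom_A(F(1), J(n))$ with $F(1)^{n\vee}$ via (\ref{defining property of J(n) again}), composition with $\cdot Sq^{2^k}$ becomes the $\Z/2$-linear dual of $Sq^{2^k}: F(1)^{2^k} \to F(1)^{2^{k+1}}$. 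Matching the two descriptions (both identifications being isomorphisms for $n = 2^k$ and $n = 2^{k+1}$), one concludes that the restriction $\cdot Sq^{2^k}: J(2^{k+1})^1 \to J(2^k)^1$ is the $\Z/2$-linear dual of $Sq^{2^k}: F(1)^{2^k} \to F(1)^{2^{k+1}}$.

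Finally, the explicit structure of $F(1)$ recalled above shows that $F(1)^{2^k}$ and $F(1)^{2^{k+1}}$ are one-dimensional with $Sq^{2^k} t^{2^k} = t^{2^{k+1}}$, so this map is an isomorphism; hence so is its dual, and therefore $\cdot Sq^{2^k}$ must carry $x_{k+1}$ to $x_k$. (Equivalently, $\cdot Sq^{2^k} \circ p_{k+1} = p_k$.) There is no analytic obstacle here; the step I would take the most care over is the compatibility in the second paragraph, i.e.\ that ``dualize $Sq^{2^k}$ via (\ref{defining property of J(n) again})'' and ``evaluate the resulting natural transformation on $F(1)$ and read it off in degree $1$ via (\ref{defining property of F(m)})'' produce literally the same linear map. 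This is a short diagram chase, but it is the place where all the relevant isomorphisms must be lined up side by side.
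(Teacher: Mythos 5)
Your proof is correct, and it is exactly the ``unwinding of the definitions'' that the paper itself invokes in a single sentence: the paper's proof also reduces the claim to the duality between $\cdot Sq^{2^k}$ and $Sq^{2^k}(t^{2^k}) = t^{2^{k+1}}$, and you have simply made the diagram chase through the two universal properties of $F(1)$ explicit. Same approach, spelled out in full.
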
  
\begin{proof} Unwinding the definitions, one sees that the formula $\cdot Sq^{2^k}(x_{k+1}) = x_k$ is dual to $Sq^{2^k}(t^{2^k}) = t^{2^{k+1}}$.
\end{proof}

\subsection{The bigraded algebra $J(\star)^*$}

We recall the computation of the bigraded object $J(\star)^* = \bigoplus_{n = 0}^{\infty} J(n)$.   This is a graded commutative algebra in the category $\U$ with multiplication $\mu: J(m) \otimes J(n) \ra J(m+n)$ corresponding to the unique $A$--module map that is nonzero in degree $m+n$.
\begin{thm} \cite[Thm.2.4.7]{schwartz book} \label{J(*) calc thm} There is an isomorphism
$$ J(\star)^* \simeq \Z/2[x_0,x_1,\dots].$$
The $A$--module structure is determined by the instability condition, the Cartan formula, and the formulae $Sq^1(x_{k+1}) = x_{k}^2$ and $Sq^1x_0 = 0$.
\end{thm}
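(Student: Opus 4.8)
The plan is to establish the theorem in three stages: that $J(\star)^*$ genuinely carries the structure of an algebra object of $\U$, that this algebra is polynomial on the classes $x_k$, and that the $A$-action is the one claimed. \textbf{For the algebra structure:} since $\U$ is closed under tensor products, $J(m)\otimes J(n)$ is unstable, and $(J(m)\otimes J(n))^{m+n}=J(m)^m\otimes J(n)^n\cong\Z/2$ because $J(k)^j=0$ for $j>k$; so by the defining property (\ref{defining property of J(n) again}), $\Hom_A(J(m)\otimes J(n),J(m+n))\cong\Z/2$, which both produces $\mu$ (the unique $A$-linear map nonzero in degree $m+n$) and underlies the next points. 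Associativity, graded commutativity, and unitality (unit $J(0)=\Z/2$) then follow formally, since in each case the two maps to be compared are $A$-linear and nonzero in top degree, hence agree by one-dimensionality of the relevant $\Hom$-group; and the Cartan formula holds automatically in $J(\star)^*$ because $\mu$ is $A$-linear and the coproduct on $A$ is the Cartan diagonal.

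\textbf{For the polynomial identification:} let $\phi\colon\Z/2[x_0,x_1,\dots]\to J(\star)^*$ be the bigraded algebra map sending the generator $x_k$, of bidegree $(2^k,1)$, to $x_k\in J(2^k)^1$. The dimensions match in every bidegree: $\dim_{\Z/2}J(n)^m=\dim_{\Z/2}F(m)^n$ by the identification $J(n)^m\cong F(m)^{n\vee}$ recalled above, and the standard admissible-monomial basis of $F(m)$ identifies $\dim F(m)^n$ with the number of ways of writing $n$ as an unordered sum of exactly $m$ powers of $2$, i.e.\ the number of monomials in the $x_k$ of bidegree $(n,m)$ --- equivalently $\sum_{n,m}\dim F(m)^n\,s^nu^m=\prod_{k\geq0}(1-s^{2^k}u)^{-1}$. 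So it is enough to show $\phi$ injective, i.e.\ that the monomials in the $x_k$ are linearly independent, and this is the crux of the theorem. The simplest sub-case shows the mechanism: $\Z/2[x_0]$ already injects, because $\mu\colon J(m-1)\otimes J(1)\to J(m)$ is nonzero in its top degree $m$ and $(J(m-1)\otimes J(1))^m=J(m-1)^{m-1}\otimes J(1)^1$ is spanned by $x_0^{m-1}\otimes x_0$, so $x_0^m\neq0$ by induction on $m$. For arbitrary monomials one runs the analogous analysis of the iterated multiplications $J(2^{k_1})\otimes\cdots\otimes J(2^{k_m})\to J(n)$ via the decomposition of tensor products of dual Brown--Gitler modules into Brown--Gitler summands; see \cite[\S2.4]{schwartz book}.

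\textbf{For the $A$-module structure:} since $|x_k|=1$, instability forces $Sq^ix_k=0$ for $i\geq2$, so the Cartan formula together with the algebra structure reduce the whole $A$-action to the classes $Sq^1x_k\in J(2^k)^2$. Now $J(1)^2=0$ (as $2>1$), giving $Sq^1x_0=0$, while the polynomial identification shows that for $k\geq1$ the group $J(2^k)^2$ is one-dimensional, spanned by $x_{k-1}^2$; so all that remains is to show $Sq^1x_k\neq0$ for $k\geq1$, which I would do by induction on $k$. For the base $k=1$, $Sq^1x_1=Sq^1p_1(t)=p_1(t^2)$ with $p_1\colon F(1)\to J(2)$ the nonzero map: if $p_1(t^2)=0$ then $p_1$ kills the submodule $A\cdot t^2=\Z/2\{t^2,t^4,\dots\}$ and so factors through the quotient $F(1)/(A\cdot t^2)\cong\Sigma\Z/2$, but $\Hom_A(\Sigma\Z/2,J(2))\cong(\Sigma\Z/2)^{2\vee}=0$, forcing the contradiction $p_1=0$. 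For the inductive step, apply the $A$-linear map $\cdot Sq^{2^{k-1}}\colon J(2^k)\to J(2^{k-1})$ of \lemref{Sq2^k lemma}, which sends $x_k$ to $x_{k-1}$: then $\cdot Sq^{2^{k-1}}(Sq^1x_k)=Sq^1x_{k-1}\neq0$ by induction, whence $Sq^1x_k\neq0$ and so $Sq^1x_k=x_{k-1}^2$. The only genuinely substantial step in all of this is the linear independence of the monomials in the second stage (equivalently, that the $x_k$ generate $J(\star)^*$ as an algebra); everything else is forced by the universal property (\ref{defining property of J(n) again}) and small-degree bookkeeping.
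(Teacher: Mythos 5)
The paper does not actually prove this theorem; it is stated with a citation to Schwartz's book, so there is no in-paper argument to compare your approach against. That said, let me assess your proposal on its own terms.

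Your three-stage outline is a sensible decomposition of the problem, and stages one and three are correct. The algebra structure on $J(\star)^*$ and its associativity, commutativity, and unitality do indeed follow from the one-dimensionality of $\Hom_A(J(a)\otimes J(b),J(a+b))$, and the observation that the Cartan formula is automatic from $A$-linearity of $\mu$ is exactly right. Your computation of the $A$-action is also clean: instability kills $Sq^i x_k$ for $i\geq 2$, $J(1)^2=0$ kills $Sq^1 x_0$, and your induction via the $A$-linearity of the right-action maps $\cdot Sq^{2^{k-1}}$ (\lemref{Sq2^k lemma}) correctly propagates $Sq^1 x_k \neq 0$ from the base case; in fact the base case can be streamlined, since by (\ref{defining property of J(n) again}) the map $p_1$ is characterized precisely by $p_1(t^2)\neq 0$, so $Sq^1 x_1 = p_1(Sq^1 t) = p_1(t^2) \neq 0$ is essentially definitional.

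The issue, as you yourself flag, is stage two. The reduction to injectivity of $\phi$ via matching Poincar\'e series is a valid strategy, but it leans on two nontrivial inputs that you do not supply: (i) the combinatorial identity $\dim F(m)^n = \#\{\text{partitions of $n$ into $m$ powers of $2$}\}$, which is true but is not an immediate read-off from the admissible basis (one needs an explicit bijection or a generating-function argument), and (ii) the linear independence of the monomials $x_{k_1}\cdots x_{k_m}$ in $J(n)^m$, which you verify only for $\Z/2[x_0]$. Showing each individual monomial is nonzero (as your argument for $x_0^m$ does) is weaker than linear independence when a given bidegree carries several monomials, and the gestured route through the splitting of $J(a)\otimes J(b)$ into Brown--Gitler summands needs care not to be circular, since that decomposition is often derived from the polynomial structure rather than the reverse. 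You are candid that this is the crux and that you are deferring it to Schwartz --- which is exactly what the paper does --- so this is an acknowledged omission rather than an error, but it means the proposal is an outline around a citation rather than a self-contained proof.
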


In particular, each $J(n)$ has a basis given by some of the monomials in the $x_k$'s.  The top degree nonzero class is $x_0^n \in J(n)^n$ and there is a unique bottom degree nonzero class in degree $\alpha_2(n)$ as follows.
\begin{notation} Let $ x(n) \in J(n)^{\alpha_2(n)}$ be the class $x(n) = x_{i_1}\cdots x_{i_d}$ where $n = 2^{i_1} + \cdots + 2^{i_d}$ with $i_1 < \cdots < i_d$ (so $d=\alpha_2(n)$).
\end{notation}

The following calculation of the map $p_k: F(1) \ra J(2^k)$ will be used by us, and is easily proved by induction on $i$.
\begin{lem} \label{pk lemma}  For $0 \leq i \leq k$, $p_k(t^{2^i}) = x_{k-i}^{2^i}$.
\end{lem}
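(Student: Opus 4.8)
The plan is to induct on $i$, from $i=0$ up to $i=k$, using only three inputs: the explicit structure of $F(1)$ recalled above (basis $t^{2^j}$ with $Sq^{2^j}t^{2^j}=t^{2^{j+1}}$ and all other non-identity operations vanishing), the fact that $p_k$ is a map of $A$-modules, and the description of the bigraded algebra $J(\star)^*$ in \thmref{J(*) calc thm}.

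The base case $i=0$ requires nothing: the claim is exactly $p_k(t)=x_k$, which is part of the definition of $x_k$ and $p_k$.

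For the inductive step, suppose $0 \le i < k$ and $p_k(t^{2^i}) = x_{k-i}^{2^i}$. Since $t^{2^{i+1}} = Sq^{2^i}t^{2^i}$ in $F(1)$ and $p_k$ commutes with Steenrod operations, $p_k(t^{2^{i+1}}) = Sq^{2^i}(x_{k-i}^{2^i})$, so it remains to evaluate the latter in $J(\star)^*$. I would do this by writing $x_{k-i}^{2^i}$ as an iterated square and applying the Cartan formula repeatedly: modulo $2$ all cross-terms in $Sq^n(z\cdot z)=\sum_{a+b=n}Sq^a(z)Sq^b(z)$ cancel in pairs by commutativity, so $Sq^{2m}(z^2)=(Sq^m z)^2$ and $Sq^{2m+1}(z^2)=0$; iterating $i$ times collapses $Sq^{2^i}$ on the $2^i$-th power to $(Sq^1 x_{k-i})^{2^i}$. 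Finally $Sq^1 x_{k-i} = x_{k-i-1}^2$ by \thmref{J(*) calc thm} (legitimate since $k-i\ge 1$), giving $Sq^{2^i}(x_{k-i}^{2^i}) = x_{k-i-1}^{2^{i+1}} = x_{k-(i+1)}^{2^{i+1}}$, which closes the induction.

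The one point deserving care — and the only real obstacle — is this computation of $Sq^{2^i}$ on the $2^i$-th power $x_{k-i}^{2^i}$. One must resist invoking an "unstable algebra" identity $Sq^{|y|}y=y^2$ for $J(\star)^*$: this object is only a commutative algebra object in $\U$, not an unstable $A$-algebra with respect to the internal degree (indeed $Sq^1 x_1 = x_0^2 \ne x_1^2$), and $x_{k-i}^{2^{i+1}}$ would in any case lie in the wrong summand $J(2^{k+1})$. Instead the needed identity $Sq^{2^i}(y^{2^i}) = (Sq^1 y)^{2^i}$ for $|y|=1$ must be extracted purely from the Cartan formula and the commutativity of $J(\star)^*$ as indicated above; once this is in hand the remainder of the argument is routine bookkeeping.
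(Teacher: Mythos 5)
Your proof is correct and takes exactly the approach the paper has in mind: the paper only remarks that the lemma ``is easily proved by induction on $i$,'' and your induction---using $t^{2^{i+1}}=Sq^{2^i}t^{2^i}$ in $F(1)$, $A$-linearity of $p_k$, and the Cartan-formula identity $Sq^{2m}(z^2)=(Sq^m z)^2$ iterated down to $Sq^1 x_{k-i}=x_{k-i-1}^2$---is precisely the intended argument, with the details filled in cleanly. Your warning against invoking the restriction axiom $Sq^{|y|}y=y^2$ is well placed, since that would land in the wrong weight summand of $J(\star)^\ast$; the Cartan-plus-commutativity route you give (or equivalently the instability of $x_j^{2^{i-1}}$, which kills all terms $Sq^a$ with $a>2^{i-1}$) is the right way to see it.
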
 

\subsection{The Mahowald sequences}  There are isomorphisms of $A$--modules
$$ \Sigma J(2n) \xra{\sim}J(2n+1)$$
and also short exact sequences \cite[Prop.2.2.3]{schwartz book}
\begin{equation} \label{mahowald ses}  0 \ra \Sigma J(2n-1) \ra J(2n) \xra{\cdot Sq^n} J(n) \ra 0.
\end{equation}
These are easy to describe on basis elements.  Firstly, $\Sigma J(n) \ra J(n+1)$ sends $\sigma x$ to $x_0x$. Secondly, it is not hard to check that 
$$\cdot Sq^{\star}: J(2{\star}) \ra  J(\star)$$
is a ring homomorphism, and identifies with the ring homomorphism
$$ \Z/2[x_0^2,x_1,x_2, \dots] \ra \Z/2[x_0,x_1,x_2, \dots]$$
sending $x_0^2$ to zero and $x_{k+1}$ to $x_k$.

\subsection{Two families of $A$--module maps and a family of $A$--modules}  As $J(n)$ has a standard basis, so does $J(n) \otimes (t)$ in the obvious way. We define two families of $A$--module maps using (\ref{defining property of J(n) again}).

\begin{defns} {\bf (a)} Let $p(n,l)^*: J(n) \ra J(l)$ be the unique $A$--module map which is nonzero on each standard basis element of $J(n)^l$. (In particular, $p(n,n)^*$ is the identity.)

\noindent{\bf (b)} Let $q(n,m)^*: J(n) \otimes (t) \ra J(m)$ be the unique $A$--module map which is nonzero on each standard basis element of $(J(n) \otimes (t))^m$. 
\end{defns}

Tensoring (\ref{basic P ses}) with $J(n)$ gives a short exact sequence of $A$--modules
$$ 0 \ra J(n) \otimes (t) \ra J(n) \otimes H^*(\tilde P_{-1}) \ra \Sigma^{-1}J(n) \ra 0.$$

\begin{defn}\label{qnm definition}  Define the $A$--module $Q(n,m)$ to be the pushout of the diagram $J(m) \xla{q(n,m)^*} J(n) \otimes (t) \ra J(n) \otimes H^*(\tilde P_{-1})$.
\end{defn}
Thus, there is a commutative diagram of short exact sequences
\begin{equation} \label{ses diagram again}
\SelectTips{cm}{}
\xymatrix{
0  \ar[r] & J(n) \otimes (t) \ar[d]^{q(n,m)^*}  \ar[r]& J(n) \otimes H^*(\tilde P_{-1}) \ar[d]  \ar[r]&  \Sigma^{-1} J(n) \ar@{=}[d] \ar[r] & 0 \\
0  \ar[r] & J(m)   \ar[r]& Q(n,m) \ar[r]&  \Sigma^{-1} J(n)  \ar[r] & 0. }
\end{equation}

\subsection{Geometric realization}
Brown and Gitler \cite{bg} showed that there are remarkable finite spectra $T(n)$\footnote{The notation $T(n)$ was used by them, and in subsequent papers by Lannes, Goerss, the third author, and others, and should not be confused with telescopic $T(n)$, used by many, including some of us.} (the $n$-duals of what were later termed Brown-Gitler spectra) satisfying the following properties:
\begin{enumerate}
\item $H^*(T(n)) \simeq J(n)$.
\item The natural map $[T(n),X] \ra H_n(X)$, sending $f$ to $f_*(\iota_n)$, is onto whenever $X$ is spacelike (i.e.\ a retract of a suspension spectrum).

\end{enumerate}
Later Goerss \cite{goerss T(n) is spacelike} and Lannes \cite{lannes T(n) are spacelike} proved a third property:
\begin{enumerate}
\item[(3)] $T(n)$ is spacelike.
\end{enumerate}
A nice proof of all of this is given in \cite{glm}, and the equivalence of the second and third properties, assuming the first, is shown in \cite{hunter kuhn}.

It follows that there exist maps $T(i+j) \ra T(i) \sm T(j)$ inducing $\mu: J(i) \otimes J(j) \ra J(i+j)$ in mod 2 cohomology.  

Similarly, there exist maps $p(n,l): T(l) \ra T(n)$ and $q(n,m): T(m) \ra T(n) \sm P_1$
inducing $p(n,l)^*$ and $q(n,m)^*$. We then define $Y(n,l)$ to be the fiber of $p(n,l)$ and $X(n,m)$ to be the fiber of $f(n,m)$, where  $f(n,m): T(m) \ra T(n)$ is the composite $ (1 \sm \tau) \circ q(n,m)$.  

From these constructions it follows that $Q(n,m) \simeq H^*(X(n,m))$. Indeed, applying cohomology to the left part of the diagram of fibration sequences
\begin{equation*}
\SelectTips{cm}{}
\xymatrix{
\Sigma^{-1} T(n)  \ar[r] & T(n) \sm \widetilde P_{-1}   \ar[r] & T(n) \sm P_1   \ar[r]^{1 \sm \tau} & T(n)\\
\Sigma^{-1} T(n)  \ar[r] \ar@{=}[u]& X(n,r) \ar[r] \ar[u] & T(m) \ar[u]_{q(n,m)} \ar[r]^{f(n,m)} & T(n) \ar@{=}[u]}
\end{equation*}
yields diagram (\ref{ses diagram again}).

\begin{rem} Note that $p(n,l)$ and $q(n,m)$ are only well defined up to maps of positive Adams filtration, and thus $f(n,m)$ can be varied by certain maps of Adams filtration greater than 1.  We do not know how this affects the homotopy types of the spectra $Y(n,l)$ and $X(n,m)$.
\end{rem}

Finally, we note that the Mahowald short exact sequence can be realized geometrically by a cofibration sequence: there exists a cofibration sequence
$$ T(n) \ra T(2n) \ra \Sigma T(2n-1)$$
inducing (\ref{mahowald ses}) in cohomology.  The Brown-Gitler property makes it clear that two maps exist that realize the algebraic ones, but it is not formal that they can be chosen so that the composite is null, showing that one has a cofibration sequence.  The proof that this {\em can}  be done was given in \cite{cmm78}.

\section{The Morava $K$--theory of $T(n)$} \label{K(n) section}

\subsection{The Morava $K$--theory A.H.S.S.}

Let $m \geq 1$. Recall that the $m$th Morava $K$--theory (at the prime 2) is a complex oriented ring theory having coefficient ring $K(m)^* = \Z/2[v_m^{\pm 1}]$, with $|v_m| = 2-2^{m+1}$. (This is cohomological grading.)  This is a graded field, and it follows that $K(n)^*(X)$ is a graded $K(m)^*$--vector space and, for finite $X$ and all $Y$, the natural map  $K(m)^*(X) \otimes_{K(m)^*} K^*(Y) \ra K(m)^*(X \sm Y)$ is an isomorphism. 

The Atiyah--Hirzeburch spectral sequence $\{E_r^*(X)\} \Rightarrow K(m)^*(X)$ has
$E_2^{*}(X) = H^*(X;\Z/2[v_m^{\pm 1}])$.  By sparseness, the first possible nonzero differential
is $d_{2^{m+1}-1}$, and, indeed, it is not hard to deduce\footnote{   
The proof of the odd prime version of this formula in \cite{Yagita} works without change at the prime 2, and we sketch the idea. As the differential is natural, it must have the form $ d_{2^{m+1}-2}(x) = v_m ax$ for some $a \in A^{2^{m+1}-1}$.  Furthermore $a$ must be primitive, as the differential must be a derivation when $X$ is a space, and must be nonzero to be consistent with $K(m)^*(\R P^{\infty}) \simeq K(m)^*[y]/(y^{2^m})$, calculated using a Gysin sequence. $Q_m$ is the only nonzero primitive in degree $2^{m+1}-1$.} the precise formula:
$$ d_{2^{m+1}-2}(x) = v_m Q_mx.$$
Here, $Q_m \in A^{2^{m+1}-1}$ for $m \geq 0$ are primitives
recursively defined by $Q_0 = Sq^1$ and $Q_m = [Q_{m-1}, Sq^{2^m}]$. These satisfy $Q_m^2=0$.

It follows that there is a natural isomorphism $E_{2^{m+1}}^*(X) \simeq K(m)^* \otimes H(H^*(X);Q_m)$ where the $Q_m$--homology of an $A$--module $M$ is defined to be 
$$ H(M;Q_m) = \frac{\ker Q_m}{\IM Q_m}.$$

Note that a short exact sequence of $A$--modules, $0 \ra L \ra M \ra N \ra 0$ will induce a long exact sequence
$$ \cdots \ra H(L;Q_m) \ra H(M;Q_m) \ra H(N;Q_m) \xra{d} H(L;Q_m) \ra \cdots$$
where $d$ raises degree by $|Q_m| = 2^{m+1}-1$.

\begin{ex} \label{tilde P example}  It is easy to compute that, in $H^*(\tilde P_{-1})$, we have $Q_1 t^j = t^{j+3}$ for all odd $j$, and thus $H^*(\tilde P_{-1})$ is $Q_1$--acyclic\footnote{This illustrates that the hypothesis of finite dimensionality is needed in \cite[Cor.A.6]{palmieri}, as $H^*(\tilde P_{-1})$ is {\em not} free over $A(1)$.}.  Since $Q_1$ is a derivation, $Q_1$--homology satisfies a K\"unneth theorem, and so any module of the form $M \otimes H^*(\tilde P_{-1})$ will also be $Q_1$--acyclic. This, together with the 5-lemma applied to the $Q_1$--homology long exact sequences coming from the diagram 
$$
\SelectTips{cm}{}
\xymatrix{
0  \ar[r] & J(n) \otimes (t) \ar[d]^{q(n,m)^*}  \ar[r]& J(n) \otimes H^*(\tilde P_{-1}) \ar[d]  \ar[r]&  \Sigma^{-1} J(n) \ar@{=}[d] \ar[r] & 0 \\
0  \ar[r] & J(m)   \ar[r]& Q(n,m) \ar[r]&  \Sigma^{-1} J(n)  \ar[r] & 0, }
$$
proves \lemref{simplifying lemma}: $Q(n,m)$ is $Q_1$--acyclic if and only if $q(n,m)^*$ is a $Q_1$--isomorphism.
\end{ex}  

\begin{rem}  When $m=0$, one can replace the Atiyah--Hirzebruch spectral sequence in most of the above with the Bockstein spectral sequence, and these can be identified with localized Adams spectral sequences for connective Morava $K$--theories $k(n)^*$, where $K(0) = H\Q$ and $k(0)= H\Z$. (See \cite[\S 3.1]{kuhn lloyd AGT} for more detail.)
\end{rem}

\subsection{The $Q_m$--homology groups of $J(n)$}

In this subsection, we calculate $H(J(\star)^*;Q_m)$.

Recall that $J(\star)^* = \Z/2[x_0,x_1, \dots]$, with $x_k \in J(2^k)^1$ and with $A$--module structure determined by the Cartan formula, the instability condition, $Sq^1(x_0) = 0$, and  $Sq^1(x_{k}) = x_{k-1}^2$ for $k\geq 1$.  The action of the derivation $Q_m$ on $J(\star)^*$ is determined by the following lemma, which easily proved by induction on $m$.

\begin{lem}  
$Q_mx_k =
\begin{cases}
 0 & \text{for } k\leq m, \\ x_{k-m-1}^{2^{m+1}} & \text{for } k > m.
\end{cases}
$
\end{lem}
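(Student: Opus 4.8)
\subsection*{Proof proposal}

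The plan is to induct on $m$, feeding the recursive definition $Q_m = Q_{m-1}Sq^{2^m} + Sq^{2^m}Q_{m-1}$ into the algebra structure of $J(\star)^* = \Z/2[x_0,x_1,\dots]$, and exploiting that the only Steenrod operations acting nontrivially on the degree-one class $x_k$ are the identity and $Sq^1$ (with $Sq^1 x_k = x_{k-1}^2$ for $k \geq 1$ and $Sq^1 x_0 = 0$).

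First I would dispose of the range $k \leq m$ for free: $Q_m x_k$ lies in $J(2^k)$ in degree $1 + |Q_m| = 2^{m+1}$, and since $2^{m+1} > 2^k$ exactly when $k \leq m$, this group is zero, so $Q_m x_k = 0$ for degree reasons alone. Thus the real content is the formula $Q_m x_k = x_{k-m-1}^{2^{m+1}}$ for $k > m$. The base case $m = 0$ is literally the given relation $Q_0 x_k = Sq^1 x_k = x_{k-1}^2$.

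For the inductive step ($m \geq 1$), I would expand $Q_m x_k = Q_{m-1}(Sq^{2^m} x_k) + Sq^{2^m}(Q_{m-1} x_k)$. Since $\deg x_k = 1 < 2^m$, instability kills the first summand, and as $k > m > m-1$ the inductive hypothesis rewrites the second as $Sq^{2^m}(x_{k-m}^{2^m})$. The heart of the argument is then a small sublemma: for any class $z$ of degree one, $Sq^{2^m}(z^{2^m}) = (Sq^1 z)^{2^m}$. This I would prove by peeling off squares one at a time from $z^{2^m} = (z^{2^{m-1}})^2$, using the Cartan-formula identities $Sq^{2a}(w^2) = (Sq^a w)^2$ and $Sq^{2a+1}(w^2) = 0$ (both valid mod $2$), down to $Sq^1 z$ at the bottom. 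Applying it with $z = x_{k-m}$ — legitimate because $k-m \geq 1$, so $Sq^1 x_{k-m} = x_{k-m-1}^2$ — yields $Q_m x_k = (x_{k-m-1}^2)^{2^m} = x_{k-m-1}^{2^{m+1}}$, closing the induction.

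The only step I expect to require care is the sublemma $Sq^{2^m}(z^{2^m}) = (Sq^1 z)^{2^m}$: keeping the exponents straight and checking that the odd-index Cartan terms genuinely cancel mod $2$. I would also want to confirm the boundary indices behave — $k = m+1$ gives the top class $x_0^{2^{m+1}} \in J(2^{m+1})$, and the transition at $k = m$ matches the degree-vanishing case — but these should fall out automatically from the bookkeeping above.
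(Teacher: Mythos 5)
Your proof is correct and fills in exactly the induction on $m$ that the paper alludes to without writing out. The degree-vanishing disposal of $k\le m$ is clean; the base case $Q_0 = Sq^1$ is immediate from the given formulae; the inductive step correctly kills the $Q_{m-1}Sq^{2^m}$ summand by instability and reduces the other to $Sq^{2^m}(x_{k-m}^{2^m})$; and the sublemma $Sq^{2^m}(z^{2^m}) = (Sq^1 z)^{2^m}$ for degree-one $z$ is a standard consequence of the Cartan identity $Sq^{2a}(w^2) = (Sq^a w)^2$ applied $m$ times. The boundary check $k-m\ge 1$ (so that $Sq^1 x_{k-m}=x_{k-m-1}^2$ rather than $0$) is in place, and the bidegrees line up in $J(2^k)$ throughout. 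This is essentially the argument the paper has in mind.
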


\begin{prop} \label{Qm homology prop} $$H(J(\star)^*;Q_m) = \Z/2[x_0,\dots,x_m, x_{m+1}^2, x_{m+2}^2, \dots]/(x_k^{2^{m+1}} \ | \ k \geq 0).$$
\end{prop}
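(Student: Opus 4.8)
The plan is to recognize $(J(\star)^*, Q_m)$ as a Koszul complex on a regular sequence in a polynomial ring, so that its homology can simply be read off as a quotient ring. Throughout I abbreviate $q = 2^{m+1}$, and I use the formula for $Q_m$ on $J(\star)^* = \Z/2[x_0,x_1,\dots]$ supplied by the preceding lemma: $Q_m x_k = 0$ for $k \le m$ and $Q_m x_k = x_{k-m-1}^{q}$ for $k > m$. The first step is to isolate the cycles. Let $Z \subseteq J(\star)^*$ be the polynomial subalgebra $\Z/2[x_0,\dots,x_m,\,x_{m+1}^2,\,x_{m+2}^2,\dots]$, which is a polynomial ring on the generators $x_0,\dots,x_m$ together with $u_k := x_k^2$ for $k > m$. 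Since $Q_m$ is a derivation and we work in characteristic $2$, it kills each of these generators ($Q_m x_j = 0$ for $j \le m$ by the lemma, and $Q_m(x_k^2)=0$ always), so $Q_m|_Z = 0$.

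Next I would use the factorization $\Z/2[x_k] \cong \Z/2[x_k^2] \otimes \Z/2[x_k]/(x_k^2)$ for each $k > m$ to obtain a $\Z/2$-linear isomorphism $J(\star)^* \cong Z \otimes_{\Z/2} \Lambda$, where $\Lambda := \Z/2[\overline{x}_k : k > m]/(\overline{x}_k^2)$ is an exterior algebra and the isomorphism is $Z$-linear with $\overline{x}_k \mapsto x_k$. (This is an isomorphism of vector spaces, not of algebras, since $\overline{x}_k^2 = 0$ while $x_k^2 \neq 0$; that is all that is needed.) Under this isomorphism $Q_m$ corresponds to the $Z$-linear derivation that vanishes on $Z$ and sends $\overline{x}_k \mapsto z_k := x_{k-m-1}^{q}$. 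Note that each $z_k$ genuinely lies in $Z$: if $k-m-1 \le m$ then $z_k$ is a power of the generator $x_{k-m-1}$, and if $k-m-1 > m$ then $z_k = u_{k-m-1}^{\,q/2}$. A one-line check on the exterior generators (extended $Z$-linearly, using the Leibniz rule for $Q_m$) shows that this derivation is exactly the Koszul differential, so $(J(\star)^*, Q_m) \cong \mathrm{Kos}\bigl((z_k)_{k>m};\,Z\bigr)$.

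Finally, the sequence $(z_k)_{k>m}$, written out, is $x_0^{q}, x_1^{q},\dots,x_m^{q}, u_{m+1}^{\,q/2}, u_{m+2}^{\,q/2},\dots$: a sequence of positive powers of the \emph{distinct} polynomial generators $x_0,\dots,x_m,u_{m+1},u_{m+2},\dots$ of $Z$, each generator used exactly once. Such a sequence is regular, since modulo any finite initial segment $(z_{m+1},\dots,z_M)$ one has $Z/(z_{m+1},\dots,z_M) \cong \bigl(\bigotimes \Z/2[v]/(v^{e})\bigr) \otimes \Z/2[\text{remaining generators}]$, a nonzero polynomial ring over a nonzero ring, on which a positive power of a fresh variable is a non-zero-divisor. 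Applying the standard computation of Koszul homology on a regular sequence to each finite truncation and passing to the filtered colimit (which kills all positive-degree Koszul homology) gives $H(J(\star)^*;Q_m) = Z/(z_k : k > m) = \Z/2[x_0,\dots,x_m,\,x_{m+1}^2,\,x_{m+2}^2,\dots]/(x_k^{\,2^{m+1}} : k \ge 0)$, which is the asserted formula.

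I expect the main obstacle to be bookkeeping rather than anything deep: making the identification with the Koszul complex airtight (verifying that the change of variables intertwines $Q_m$ with the Koszul differential, and that the $z_k$ both land in $Z$ and exhaust positive powers of every generator), and handling the infinitely many variables cleanly by phrasing the Koszul complex of the infinite sequence as a colimit of finite ones so the regular-sequence theorem applies termwise. A more hands-on alternative — directly computing $\ker Q_m = Z$ and $\IM Q_m = (z_k : k>m)\cdot Z$ from the Leibniz rule — is possible, but verifying that $\ker Q_m$ is spanned by monomials (no accidental cancellation) is fiddly, which is exactly what the Koszul/regular-sequence packaging sidesteps.
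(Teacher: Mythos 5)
Your proof is correct and is essentially the same argument as the paper's: both identify $Z = \Z/2[x_0,\dots,x_m,x_{m+1}^2,\dots]$ as the cycles, recognize $(J(\star)^*,Q_m)$ as the Koszul complex over $Z$ on the sequence $x_0^{2^{m+1}}, x_1^{2^{m+1}},\dots$, and appeal to regularity of that sequence. The paper phrases the Koszul complex as a tensor decomposition $C(0)\otimes_R C(1)\otimes_R\cdots$ of two-term complexes together with a K\"unneth isomorphism, while you write it as $Z\otimes_{\Z/2}\Lambda$ with the standard Koszul differential, but these are the same decomposition.
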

\begin{proof}  It is convenient to let $R = \Z/2[x_0, \dots, x_m, x_{m+1}^2, x_{m+2}^2, \dots]$.  Then $(J(\star)^*;Q_m)$ is a chain complex of $R$ modules that decomposes as an infinite tensor product of chain complexes of $R$--modules
$$ J(\star)^* \simeq C(0) \otimes_R C(1) \otimes_R C(2) \otimes \cdots$$
where $C(k) = R \oplus Rx_{m+1+k}$ with differential $d(x_{m+k+1}) = x_k^{2^{m+1}}$.  

Using that $x_0^{2^{m+1}}, x_1^{2^{m+1}}, x_2^{2^{m+1}}, \dots$ is a regular sequence in $R$, we see first that 
$H(C(k)) = R/(x_k^{2^{m+1}})$ and then that a K\"unneth isomorphism holds:
$$ H(J(\star)^*;Q_m) \simeq H(C(0);Q_m) \otimes_R H(C(1);Q_m) \otimes_R H(C(2);Q_m) \otimes \cdots.$$
It follows that $H(J(\star)^*;Q_m) = R/(x_k^{2^{m+1}} \ | \ k \geq 0)$ as an $R$-module, and the proposition follows. 
\end{proof}

\subsection{Calculation of $K(m)^*(T(\star))$}

We now calculate $K(m)^*(T(n))$ using the Atiyah-Hirzebruch spectral sequence by assembling these for all $n$ at once.  The existence of maps $T(i+j) \ra T(i) \sm T(j)$ inducing our multiplication $J(i) \otimes J(j) \ra J(i+j)$ implies that the Atiyah--Hirzebruch spectral sequence
$ \{E_r^{*}(T(\star))\} \Rightarrow K(m)^*(T(\star))$
will be a spectral sequence of differential graded algebras.

By \propref{Qm homology prop} we have the calculation
$$ E_{2^{m+1}}^{*}(T(\star))= \Z/2[v_m^{\pm 1}, x_0,\dots,x_m, x_{m+1}^2, x_{m+2}^2, \dots]/(x_k^{2^{m+1}} \ | \ k \geq 0).$$

\begin{prop} \label{collapse prop}   The spectral sequence collapses after this: $E_{2^{m+1}}^{*}(T(\star)) = E_{\infty}^{*}(T(\star))$.
\end{prop}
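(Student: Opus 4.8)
The plan is to identify the $E_{2^{m+1}}$-page as a free graded-commutative algebra on classes that are permanent cycles for degree (sparseness) reasons, so there is no room for further differentials. First I would note that, by the multiplicative structure of the spectral sequence, it suffices to check that a set of algebra generators of $E_{2^{m+1}}^*(T(\star))$ consists of permanent cycles. From \propref{Qm homology prop} the $E_{2^{m+1}}$-page is generated over $\Z/2[v_m^{\pm 1}]$ by the classes $x_0, \dots, x_m$ and $x_{m+1}^2, x_{m+2}^2, \dots$, subject only to the truncation relations $x_k^{2^{m+1}} = 0$. So the whole question reduces to: why are each of these generators permanent cycles?

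The key input is that each of these generators is the reduction of an honest Morava $K$-theory class coming from a known computation on a small piece of $T(\star)$. Concretely, $x_k \in J(2^k)^1 = H^1(T(2^k))$, and $T(2^k)$ is (after suspension, if $2^k$ is odd, but $2^k$ is even for $k \geq 1$ and $T(1) = \Sigma T(0) = \Sigma S$) a stunted projective space or at least maps compatibly from/to projective spaces: recall $T(2) = \Sinfty \R P^2$ and $T(4) = \Sinfty \R P^4$, and more generally there are maps relating $T(2^k)$ to $\R P^\infty$ via the maps $p_k \colon F(1) \to J(2^k)$ realized geometrically. I would use the maps $T(2^k) \to T(2^{k-1}) \sm T(2^{k-1})$ and the ring map from $K(m)^*(\R P^\infty)$, where $K(m)^*(\R P^\infty) = K(m)^*[y]/(y^{2^m})$ is already known (cf.\ the footnote computing the AHSS differential via a Gysin sequence), to produce permanent cycles detecting $x_0, \dots, x_m$. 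Since $x_k$ for $k \leq m$ is exactly the image of a generator surviving to $K(m)^*(\R P^\infty)$ (the relation $y^{2^m} = 0$ there matches $x_k^{2^{m+1}} = 0$ after squaring... one must be a little careful with the exact truncation degree), these lift. For the squared generators $x_{m+1}^2, x_{m+2}^2, \dots$, I would observe that a square of any class is automatically a $d_r$-cycle for all $r < 2^{m+1}$ by the derivation property already, and then that $Q_m(x_{m+1}^2) = 2 x_{m+1} Q_m x_{m+1} = 0$ in characteristic $2$ is not the point — rather, $x_{m+1}^2$ survives to $E_{2^{m+1}}$ by \propref{Qm homology prop} and then, being a square in a spectral sequence of DGAs over $\F_2$ where $d_{2^{m+1}-2}$ is a derivation, $d_{2^{m+1}-2}(x_{m+1}^2) = 0$ automatically; one still needs to rule out the later differentials $d_r$ for $r > 2^{m+1}-2$, which is where sparseness re-enters: after $E_{2^{m+1}}$ the next possible differential has length at least... and here I would invoke that the $v_m$-periodicity forces differentials to shift by multiples related to $|v_m|$, combined with a dimension-counting argument showing the total $K(m)^*$-rank predicted by the $E_{2^{m+1}}$-page is already achieved.

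Actually the cleanest route, and the one I would write up, is a counting argument: since each $T(n)$ is finite, $K(m)^*(T(n))$ is a finite-dimensional $K(m)^*$-module, and the Euler characteristic $\dim_{\F_2} K(m)^0(T(n)) - \dim_{\F_2} K(m)^1(T(n))$ (suitably interpreted, or just the total dimension over $K(m)^*$) is computable independently via the fact that $K(m)$-Euler characteristic equals the ordinary mod $2$ Euler characteristic, which is $\dim_{\F_2} J(n)$ in even-versus-odd balance. If one can show that the $E_{2^{m+1}}$-page already has total dimension in each $J(\star)$-degree matching the known answer — equivalently, that no further cancellation can occur without dropping below the Euler characteristic, using that all surviving classes of the $E_{2^{m+1}}$-page lie in a single parity pattern dictated by the polynomial generators — then collapse is forced. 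The generators $x_0, \dots, x_m$ have degree $1$ (odd), the $x_k^2$ for $k > m$ have degree $2$ (even), and a monomial's parity is determined by how many of the odd generators it uses; a differential $d_r$ would pair up classes of opposite parity and hence decrease the $K(m)^*$-dimension, contradicting the Euler-characteristic lower bound.

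The main obstacle I anticipate is making the Euler-characteristic/parity bookkeeping genuinely rigorous: a priori the Euler characteristic argument only constrains the \emph{difference} of even and odd dimensions, not each separately, so one must either (i) pin down permanent cycles for the odd-degree generators $x_0, \dots, x_m$ explicitly via the $\R P^\infty$ comparison (the approach sketched above), which then leaves only even-degree squared generators which cannot support differentials out of $E_{2^{m+1}}$ for degree reasons, or (ii) argue that any hypothetical later differential would have to have a source that is a polynomial generator (since the page is a free truncated algebra and $d_r$ is a derivation), and then rule out each such source by naturality against the projective-space computations. Option (i) is the surest; the technical cost is carefully matching the truncation heights ($y^{2^m}=0$ in $K(m)^*(\R P^\infty)$ versus $x_k^{2^{m+1}}=0$ here) and checking the relevant maps $T(2^k)\to T(2^{k-1})\wedge T(2^{k-1})$ and $P_1 \to T(2^k)$-type maps induce what one expects on $K(m)^*$, which is routine given the cohomology computations of \lemref{pk lemma} and \thmref{J(*) calc thm}.
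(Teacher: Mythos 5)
Your overall plan --- show each algebra generator of $E_{2^{m+1}}^{*}(T(\star))$ is a permanent cycle and invoke multiplicativity --- matches the paper's. But the execution has a genuine gap precisely at the crucial point, the squared generators $x_k^2$ for $k > m$.

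For the degree-one generators $x_0, \dots, x_m$, your $\R P^\infty$-comparison is actually unnecessary and a bit off-target: the surviving generator of $K(m)^*(\R P^\infty)$ lives in cohomological degree $2$ (represented by $t^2$, since $Q_mt = t^{2^{m+1}} \neq 0$ kills $t$ itself), so it would not directly detect the degree-$1$ classes $x_k$. The paper instead dispatches these by a one-line degree argument: $x_k \in J(2^k)^1$ with $J(2^k)$ concentrated in degrees $\leq 2^k \leq 2^m$, while any later differential raises cohomological degree by at least $2^{m+1}-1 > 2^m - 1$.

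For the generators $x_k^2$ with $k > m$, none of the three arguments you float actually closes the gap. \emph{Derivation property:} $x_k$ does \emph{not} survive to $E_{2^{m+1}}$ (it has $Q_m x_k \neq 0$), so $x_k^2$ is not the square of anything on that page and $d_r(a^2)=2a\,d_r(a)$ gives you nothing for $r \geq 2^{m+1}-1$. \emph{Degree reasons:} $J(2^{k+1})$ has classes in degrees up to $2^{k+1}$, which is unbounded in $k$, so unlike the degree-one case you cannot rule out targets a priori; you would have to inspect the actual $Q_m$-homology degrees in each $J(2^{k+1})$, which is not a degree-reasons argument. \emph{Euler characteristic:} as you yourself note, it only constrains the difference of even and odd ranks, not the total, so it cannot forbid a differential that pairs off classes of opposite parity. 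The missing idea is the paper's naturality argument against $\C P^\infty$: the Brown--Gitler property of $T(2^{k+1})$ provides a map $f\colon T(2^{k+1}) \to \Sigma^\infty \C P^\infty$ nonzero on $H^{2^{k+1}}$; one checks (using $Sq^{2^k}\cdots Sq^2 y = y^{2^k}$ and the structure of $J(2^{k+1})$) that $f^*(y) = x_k^2$, and since $y \in H^2(\C P^\infty)$ is manifestly a permanent cycle in the AHSS for $K(m)^*(\C P^\infty)$, so is $x_k^2$. Your instinct to compare against a projective space with known $K(m)$-theory is the right one, but you applied it to the wrong family of generators and did not produce a working version of the comparison for the $x_k^2$, which is where the proposition actually has content.
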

\begin{proof}  We show that all the algebra generators of $E_{2^{m+1}}^{*,\star}(T(\star))$ are permanent cycles.  

The generator $x_k \in E_{2^{m+1}}^{1}(T(2^k))$ will be a permanent cycle for $k\leq m$ for degree reasons.

Let $y \in H^2(\C P^{\infty};\Z/2)$ be the nonzero algebra generator. This is certainly a permanent cycle in the A.H.S.S.\ computing $K(m)^*(\C P^{\infty})$. Then $x_k^2 \in E_{2^{m+1}}^{2}(T(2^{k+1}))$ will be also be a permanent cycle, as it will be in the image of $y \in E_{2^{m+1}}^2(\C P^{\infty})$ under any map $f:T(2^{k+1}) \ra \Sinfty \C P^{\infty}$ that is nonzero on mod 2 cohomology in degree $2^{k+1}$. We give more detail of this last point. Such a map exists by the Brown--Gitler property of $T(2^{k+1})$. Since $f^*(y^{2^k}) = x_1^{2^{k+1}}$ and $Sq^{2^k}\cdots Sq^2(y) = y^{2^k}$, we deduce that $ Sq^{2^k}\cdots Sq^2(f^*(y)) = x_1^{2^{k+1}}$.  Thus $f^*(y) = x_k^2$, the only nonzero element in $J(2^{k+1})$ of degree 2.
\end{proof}

\begin{rem}   We confess that what we have done here appears (in dual form) in the literature.  Yamaguchi \cite{Yamaguchi} computes $K(m)_*(\Omega^2 S^{2r+1})$ which, up to a shift of degrees of generators, is equivalent to calculating $K(m)^*(J(\star))$.  The link here is that $\Sinfty \Omega^2 S^{2r+1}$ decomposes as an infinite wedge of finite spectra dual to the $T(n)$ after appropriate suspension. (It is a significant theorem  that these wedge summands really are dual to Brown and Gitler's $T(n)$'s \cite{brown peterson, hunter kuhn}.)  Yamaguchi's argument is similar to ours; indeed our \propref{Qm homology prop} and \propref{collapse prop} are $p=2$ versions of his Lemma (2.1) and Lemma (2.2), though our argument showing that $x_k^2$ is an A.H.S.S.~ permanent cycle is quite different than his: he uses that his spectral sequence is a spectral sequence of Hopf algebra, and we use the Brown-Gitler property of the $T(2^k)$'s.  Ravenel revisits Yamaguchi's argument in \cite{ravenel K(n)}, and both of them conclude that (formulated our way) $K(m)^*(T(\star))$ is isomorphic as a $K(m)^*$--algebra to 
$K(m)^*[\tilde x_0,\dots,\tilde x_m, c_{m+2}, c_{m+3}, \dots]/(\tilde x_k^{2^{m+1}}, c_l^{2^m})$, 
with $\tilde x_k \in K(m)^1(T(2^k))$ and $c_{l} \in K(m)^2(T(2^{l}))$ respectively represented by $x_k$ and $x_{l-1}^2$ in the A.H.S.S.  
\end{rem}

\subsection{Specialization to $m \leq 2$}

We remind the reader that $T(0) = S^0$ and $T(1) = S^1$, and also that $ x(n) \in J(n)^{\alpha_2(n)}$ denotes the class $x_{i_1}\cdots x_{i_d}$ where $n = 2^{i_1} + \cdots + 2^{i_d}$ with $i_1 < \cdots < i_d$ (so $d=\alpha_2(n)$).

It is convenient to let $k_{m,n}$ denote the dimension of $H(J(n);Q_m)$, which also equals the rank of $K(m)^*(T(n))$ as a $K(m)^*$--module. So, for example, $k_{m,0} = k_{m,1} = 1$ for all $m$.  Then let $\displaystyle k_m(t) = \sum_{n=0}^{\infty} k_{m,n}t^n$.

We specialize \propref{Qm homology prop} to the case $m=0$.
\begin{cor} \label{m=0 cor}  $J(n)$ is $Sq^1$--acyclic for all $n \geq 2$, and thus $T(n)$ is rationally acyclic for all $n \geq 2$.
\end{cor}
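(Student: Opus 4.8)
The plan is to read the $m=0$ case straight off \propref{Qm homology prop} and then convert the vanishing of $Sq^1$--Margolis homology into rational acyclicity via the mod 2 Bockstein spectral sequence.

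First I would specialize \propref{Qm homology prop} to $m=0$, using $Q_0 = Sq^1$ and $2^{m+1}=2$; this gives
$$ H(J(\star)^*;Sq^1) = \Z/2[x_0, x_1^2, x_2^2, \dots]/(x_k^2 \mid k \geq 0). $$
The relations $x_k^2 = 0$ for $k \geq 1$ kill the polynomial generators $x_1^2, x_2^2, \dots$ themselves, so the quotient collapses to $\Z/2[x_0]/(x_0^2)$, which is spanned by $1 \in J(0)$ and $x_0 \in J(1)^1$. Keeping track of the Brown--Gitler grading (a monomial $\prod_k x_k^{a_k}$ lives in $J(\sum_k a_k 2^k)$), these two surviving classes lie in $J(0)$ and $J(1)$ respectively. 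Hence $H(J(n);Sq^1) = 0$ for all $n \geq 2$, which is the first assertion (while $H(J(n);Sq^1) \cong \Z/2$ for $n = 0,1$).

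For the topological consequence I would apply the mod 2 Bockstein spectral sequence of the finite spectrum $T(n)$: its $E_1$--page is $H^*(T(n);\Z/2)$ with $d_1 = Sq^1$, so its $E_2$--page is $H(H^*(T(n);\Z/2);Sq^1) = H(J(n);Sq^1)$, and it converges to a graded $\Z/2$--vector space whose dimension in each degree is the $\Z$--rank of $H^*(T(n);\Z)$ (equivalently, one may run the $K(0)=H\Q$ Atiyah--Hirzebruch spectral sequence, as in the remark after \exref{tilde P example}). For $n \geq 2$ the $E_2$--page vanishes, hence so does the abutment, so $H^*(T(n);\Z)$ is torsion in every degree; since $T(n)$ is finite this forces $H^*(T(n);\Q) = H^*(T(n);\Z)\otimes\Q = 0$, i.e., $T(n)$ is rationally acyclic. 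There is no real obstacle here — the corollary is pure bookkeeping once \propref{Qm homology prop} is in hand — so the only step worth stating carefully is this last translation, which I would phrase through the Bockstein spectral sequence rather than via \cite[Cor.A.6]{palmieri} (that result concerns $Q_1$ and $A(1)$--freeness and has no bearing on $Q_0$).
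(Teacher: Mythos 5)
Your proposal is correct and takes essentially the same route as the paper: specialize \propref{Qm homology prop} to $m=0$ to see that $H(J(\star);Q_0)=\Z/2[x_0]/(x_0^2)$ is concentrated in $J(0)$ and $J(1)$, then deduce rational acyclicity of $T(n)$ for $n\geq 2$ via the Bockstein spectral sequence (which the remark following \exref{tilde P example} identifies with the $K(0)=H\Q$ Atiyah--Hirzebruch spectral sequence). The paper's proof is just a terse two-line version of your argument.
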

\begin{proof} $H^*(J(\star);Q_0) = \Z/2[x_0]/(x_0^2)$ and $k_0(t) = 1+ t$.
\end{proof}
We specialize \propref{Qm homology prop} to the case $m=1$.
\begin{cor} \label{m=1 cor}  $k_{1,n}=2$ for all $n \geq 2$.  For $n \geq 1$ and $e$ equal to 0 or 1, cycle representatives for $H(J(2n+e);Q_1)$ are given by $x_0^ex(n)^2$ and $x_0^ex_1x(n-1)^2$.
\end{cor}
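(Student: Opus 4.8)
The plan is to read off the internal grading directly from \propref{Qm homology prop}. Write $\star$ for the grading on $J(\star)^\ast = \bigoplus_n J(n)$ by which $x_i$ has $\star$-degree $2^i$. Since $Q_1$ preserves this grading (immediate from the formula $Q_1 x_k = x_{k-2}^{4}$ for $k \geq 2$), the isomorphism of \propref{Qm homology prop} is one of $\star$-graded rings, and for $m=1$ it reads
$$ H(J(\star)^\ast;Q_1) \cong \Z/2[x_0,x_1,x_2^2,x_3^2,\dots]/(x_k^4 \mid k \geq 0), $$
where the $\star$-degree $n$ summand of the left side is exactly $H(J(n);Q_1)$. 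A $\Z/2$-basis for this ring is given by the monomials $x_0^{a_0}x_1^{a_1}\prod_{k\geq 2}x_k^{2b_k}$ with $a_0,a_1 \in \{0,1,2,3\}$ and $b_k \in \{0,1\}$ almost all zero, so that the whole problem becomes an exercise in sorting these monomials by $\star$-degree.

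For the dimension statement I would compute the Poincar\'e series $k_1(t) = \sum_n k_{1,n}t^n$ by factoring off the $x_0$-, $x_1$-, and higher parts. The $x_0$-factor contributes $1+t+t^2+t^3 = (1-t^4)/(1-t)$ and the $x_1$-factor $1+t^2+t^4+t^6 = (1-t^8)/(1-t^2)$; the remaining factor $\bigotimes_{k\geq 2}\Z/2[x_k^2]/(x_k^4)$ has basis the squarefree products $\prod_{k\in S}x_k^2$ over finite $S \subseteq \Z_{\geq 2}$, of $\star$-degree $2\sum_{k\in S}2^k$, so by uniqueness of binary expansions its Poincar\'e series is $\sum_{j\geq 0}t^{8j} = 1/(1-t^8)$. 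Multiplying,
$$ k_1(t) = \frac{(1-t^4)(1-t^8)}{(1-t)(1-t^2)(1-t^8)} = \frac{1-t^4}{(1-t)(1-t^2)} = \frac{1+t^2}{1-t} = 1 + t + 2t^2 + 2t^3 + \cdots, $$
giving $k_{1,n}=2$ for $n \geq 2$ (and $k_{1,0}=k_{1,1}=1$). A hand count of basis monomials in each $\star$-degree works just as well and avoids the generating-function manipulation.

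Finally I would identify the two named classes. Each of $x_0^e x(n)^2$ and $x_0^e x_1 x(n-1)^2$ is a monomial in $J(\star)^\ast = \Z/2[x_0,x_1,\dots]$ of $\star$-degree $2n+e$, hence lies in $J(2n+e)$, and each is a $Q_1$-cycle because $Q_1$ is a derivation with $Q_1 x_0 = Q_1 x_1 = 0$ and $Q_1(z^2) = 0$ in characteristic $2$, while both elements are products of $x_0$, $x_1$, and squares. Writing $n = \sum n_i 2^i$ and $n-1 = \sum m_i 2^i$ in binary and using $x(n) = \prod_{n_i = 1} x_i$, the classes of these cycles in $H(J(\star)^\ast;Q_1)$ reduce to $x_0^{e+2n_0}x_1^{2n_1}\prod_{k\geq 2}x_k^{2n_k}$ and $x_0^{e+2m_0}x_1^{1+2m_1}\prod_{k\geq 2}x_k^{2m_k}$ respectively; since $e \in \{0,1\}$ every exponent stays within the ranges above, so both are nonzero basis monomials, and they are distinct because the exponent of $x_1$ is even in the first and odd in the second. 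As $\dim H(J(2n+e);Q_1) = k_{1,2n+e} = 2$ for $n \geq 1$, these two classes are a basis, proving the corollary. There is no real difficulty here beyond bookkeeping with the two gradings; the one point to verify with a little care is precisely that $x(n)^2 = \prod_{n_i=1}x_i^2$, and its product with $x_0^e$ or $x_0^e x_1$, does not collapse to $0$ in the quotient ring — which holds because doubling each binary digit of $n$, then adjoining $x_0^e$ (with $e \leq 1$) or $x_0^e x_1$, never pushes the exponent of $x_0$ or $x_1$ past $3$.
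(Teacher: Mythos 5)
Your proof is correct and follows essentially the same route as the paper: both read $k_1(t)$ off the tensor factorization of $H(J(\star)^*;Q_1)$ from Proposition~\ref{Qm homology prop} and then check the two named monomials are $Q_1$-cycles. The only cosmetic differences are that you justify the $\prod_{k\geq 2}$ factor equalling $1/(1-t^8)$ via uniqueness of binary expansions rather than telescoping the infinite product, and you spell out (via the parity of the $x_1$-exponent and the bound on exponents) why the two representatives are nonzero and distinct in the quotient ring, a point the paper leaves to the reader by noting only that they are cycles in different degrees.
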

\begin{proof}  From the computation
$$ H(J(\star);Q_1) = \bigotimes_{k=0}^1 \Z/2[x_k]/(x_k^4) \otimes \bigotimes_{k=2}^{\infty}\Z/2[x_k^2]/(x_k^4),$$
we compute that
\begin{equation*}
\begin{split}
k_1(t) &  = \frac{(1-t^4)}{(1-t)}\frac{(1-t^8)}{(1-t^2)} \cdot \prod_{k=2}^{\infty} \frac{(1-t^{2^{k+2}})}{(1-t^{2^{k+1}})}\\
  &  =  \frac{(1-t^4)}{(1-t)}\frac{(1-t^8)}{(1-t^2)} \cdot \frac{1}{(1-t^8)}  \\
  &  = \frac{(1+t^2)}{(1-t)} = (1+t^2)\cdot \sum_{n=0}^{\infty} t^n = 1 + t + \sum_{n=2}^{\infty} 2t^n,
\end{split}
\end{equation*}
implying $k_1(n) = 2$ for all $n \geq 2$.  The given elements {\em are} $Q_1$--cycles in $J(n)$ of different degrees.
\end{proof}

We specialize \propref{Qm homology prop} to the case $m=2$.
\begin{cor} \label{m=2 cor} $k_{2,2n+e} = 2n$ for all $n \geq 1$ and $e$ equal to 0 or 1.
\end{cor}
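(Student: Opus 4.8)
The plan is to follow the template of \corref{m=1 cor}: compute the Poincaré series of $H(J(\star)^*;Q_2)$ in the weight variable $t$ — where $x_k$ is assigned weight $2^k$, so that the weight-$n$ summand is exactly $H(J(n);Q_2)$ — and then extract the coefficient of $t^{2n+e}$. First I would specialize \propref{Qm homology prop} to $m=2$, giving
$$H(J(\star)^*;Q_2) = \Z/2[x_0,x_1,x_2,x_3^2,x_4^2,\dots]/(x_k^8 \mid k\geq 0).$$
As a weighted vector space this is a tensor product of truncated polynomial algebras: one generator of weight $1$ (namely $x_0$), one of weight $2$ ($x_1$), one of weight $4$ ($x_2$), each truncated at the eighth power, together with, for each $k\geq 3$, a generator $x_k^2$ of weight $2^{k+1}$ truncated at its fourth power (since $x_k^8=(x_k^2)^4$). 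Writing $k_2(t)=\sum_{n=0}^\infty k_{2,n}t^n$, this yields
$$k_2(t)=\frac{1-t^8}{1-t}\cdot\frac{1-t^{16}}{1-t^2}\cdot\frac{1-t^{32}}{1-t^4}\cdot\prod_{k=3}^{\infty}\frac{1-t^{2^{k+3}}}{1-t^{2^{k+1}}}.$$

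Next I would observe that the infinite product telescopes — the numerator $1-t^{2^{k+3}}$ of the $k$th factor cancels the denominator of the $(k+2)$nd factor — leaving $\prod_{k=3}^\infty\frac{1-t^{2^{k+3}}}{1-t^{2^{k+1}}}=\frac{1}{(1-t^{16})(1-t^{32})}$. Hence
$$k_2(t)=\frac{1-t^8}{(1-t)(1-t^2)(1-t^4)}=\frac{1+t^4}{(1-t)(1-t^2)},$$
using $1-t^8=(1-t^4)(1+t^4)$. Since the coefficient of $t^N$ in $\frac{1}{(1-t)(1-t^2)}$ is $\lfloor N/2\rfloor+1$, multiplication by $1+t^4$ gives coefficient $(\lfloor N/2\rfloor+1)+(\lfloor (N-4)/2\rfloor+1)=2\lfloor N/2\rfloor$ for $N\geq 4$, and coefficient $\lfloor N/2\rfloor+1=2$ for $N=2,3$. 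Taking $N=2n+e$ (so $\lfloor N/2\rfloor=n$) gives $k_{2,2n+e}=2n$ for all $n\geq 1$, and one also reads off $k_{2,0}=k_{2,1}=1$, consistent with $T(0)=S^0$ and $T(1)=S^1$.

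The computation is entirely routine; the only points that warrant a little care are the telescoping of the infinite product and checking that the closed form $2\lfloor N/2\rfloor$ persists down to $N=2,3$ (where it correctly gives $2=2\cdot 1$), so that the statement holds for all $n\geq 1$ rather than merely $n\geq 2$.
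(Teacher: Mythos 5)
Your proposal is correct and follows essentially the same route as the paper: specialize \propref{Qm homology prop} to $m=2$, telescope the infinite product to obtain $k_2(t)=\frac{1+t^4}{(1-t)(1-t^2)}$, and read off coefficients. The only (cosmetic) difference is in the final step, where you extract the coefficient directly via $\lfloor N/2\rfloor+1$ rather than verifying, as the paper does, the identity $\frac{1+t^4}{(1-t)(1-t^2)}=(1+t)\bigl(1+\sum_{n\geq 1}2nt^{2n}\bigr)$.
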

\begin{proof}  We first show that $\displaystyle k_2(t) = \frac{(1+t^4)}{(1-t)(1-t^2)}$ in a manner similar to our last proof.  We have shown that 
$$ H(J(\star);Q_2) = \bigotimes_{k=0}^2\Z/2[x_k]/(x_k^8) \otimes \bigotimes_{k=3}^{\infty} \Z/2[x_k^2]/(x_k^8),$$
and so
\begin{equation*}
\begin{split}
k_2(t)  & = \prod_{k=0}^2 \frac{(1 - t^{2^{k+3}})}{(1-t^{2^k})} \cdot \prod_{k=3}^{\infty} \frac{(1 - t^{2^{k+3}})}{(1-t^{2^{k+1}})}  \\
  & = \frac{(1-t^8)}{(1-t)} \frac{(1-t^{16})}{(1-t^2)} \frac{(1-t^{32})}{(1-t^4)}\cdot \frac{1}{(1-t^{16})} \frac{1}{(1-t^{32})} \\
 & = \frac{(1+t^4)}{(1-t)(1-t^2)}.
\end{split}
\end{equation*}

Now we show that $\displaystyle \frac{(1+t^4)}{(1-t)(1-t^2)} = (1+t)\left(1 + \sum_{n=1}^{\infty} 2nt^{2n}\right)$, as the right hand side has coefficients as in the statement of the corollary.

Recall that $\displaystyle \frac{1}{(1-u)} = \sum_{i=0}^{\infty} u^i$, and then that $\displaystyle \frac{1}{(1-u)^2} = \sum_{i=0}^{\infty} (i+1)u^i$.  Multiplying by $2u$ and letting $n=i+1$, we see that
$\displaystyle \frac{2u}{(1-u)^2} = \sum_{n=1}^{\infty} 2nu^n$.  Letting $u=t^2$, we learn that $\displaystyle \frac{2t^2}{(1-t^2)^2} = \sum_{n=1}^{\infty} 2nt^{2n}$.  Altogether, this implies
\begin{equation*}
\begin{split}
(1+t)\left(1 + \sum_{n=1}^{\infty} 2nt^{2n}\right) & = (1+t)\left(1 + \frac{2t^2}{(1-t^2)^2}\right)  \\
  &  = (1+t)\frac{1+t^4}{(1-t^2)^2} = \frac{(1+t^4)}{(1-t)(1-t^2)}.
\end{split}
\end{equation*}
\end{proof}

\corref{m=2 cor} has an obvious consequence.

\begin{cor} \label{X(n,m) not K2 acyclic cor} If $m$ and $n$ are even and $m > n$, then $f(n,m): T(m) \ra T(n)$ is not a $K(2)^*$--isomorphism and thus $X(n,m)$ is not $K(2)^*$-acyclic.
\end{cor}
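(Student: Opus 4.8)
The statement is essentially a dimension count, so the plan is short. First I would record that, by \propref{collapse prop}, the Atiyah--Hirzebruch spectral sequence computing $K(2)^\ast(T(N))$ collapses at its first possible differential ($E_8 = E_\infty$), so that $\dim_{K(2)^\ast} K(2)^\ast(T(N)) = k_{2,N} = \dim_{\Z/2} H(J(N);Q_2)$. I would then apply \corref{m=2 cor}: for even $N = 2N'$ with $N' \geq 1$ this gives $k_{2,N} = 2N' = N$, while for $N = 0$ we have $T(0) = S^0$ and $k_{2,0} = 1$. Consequently, whenever $n$ and $m$ are even with $m > n$, the ranks $\dim_{K(2)^\ast} K(2)^\ast(T(m))$ and $\dim_{K(2)^\ast} K(2)^\ast(T(n))$ are distinct --- they are $m$ and $n$ if $n \geq 2$, and $m \geq 2$ and $1$ if $n = 0$. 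Hence $f(n,m)$ cannot induce an isomorphism on $K(2)^\ast$.

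For the second assertion I would invoke the cofibration sequence $X(n,m) \ra T(m) \xra{f(n,m)} T(n)$ underlying the fibration (\ref{T(n) fibration}). Since $K(2)$ is a field spectrum, $K(2)^\ast$ of each of these finite spectra is a finite-dimensional graded $K(2)^\ast$-vector space, and the cofibration sequence induces a long exact sequence of such. From it one reads off that $X(n,m)$ is $K(2)^\ast$-acyclic if and only if $f(n,m)^\ast$ is an isomorphism; since the latter fails by the previous paragraph, $K(2)^\ast(X(n,m)) \neq 0$. More quantitatively, exactness forces $\dim_{K(2)^\ast} K(2)^\ast(X(n,m)) \geq \dim_{K(2)^\ast} K(2)^\ast(T(m)) - \dim_{K(2)^\ast} K(2)^\ast(T(n)) = m - n > 0$.

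I do not expect any real obstacle here: this is precisely the ``obvious consequence'' of \corref{m=2 cor} advertised in the text, and all the substance lives in \propref{Qm homology prop} and \propref{collapse prop}. The only two points worth a sentence of care are the use of the collapse statement to convert $Q_2$-Margolis-homology dimensions into genuine $K(2)^\ast$-ranks, and the degenerate case $n = 0$, where $T(0) = S^0$ has $K(2)^\ast$-rank $1$, still different from $\dim_{K(2)^\ast} K(2)^\ast(T(m)) = m \geq 2$.
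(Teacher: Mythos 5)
Your argument is correct and is exactly the argument the paper leaves implicit when it labels this an ``obvious consequence'' of \corref{m=2 cor}: combine the identification $\dim_{K(2)^*}K(2)^*(T(N)) = k_{2,N}$ (from \propref{collapse prop}) with the formula $k_{2,2n+e}=2n$ to see the ranks differ when $m>n$ are both even, then use the long exact $K(2)^*$-sequence of the cofibration $X(n,m)\to T(m)\to T(n)$ (and the fact that $K(2)^*$ is a graded field) to conclude that $f(n,m)^*$ being a non-isomorphism is equivalent to $K(2)^*(X(n,m))\neq 0$. The only small slip is that your closing quantitative bound $\dim K(2)^*(X(n,m))\geq m-n$ quietly assumes $n\geq 2$; for $n=0$ the correct lower bound is $m-1$, consistent with the $k_{2,0}=1$ case you already flagged --- but this is a side remark and does not affect the proof.
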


\section{Proof of \thmref{main thm}: part 1} \label{main proof part 1}

In this and the next two sections, we determine for which pairs $(n,m)$, the map
$$ q(n,m)^*: J(n) \otimes (t) \ra J(m)$$
induces an isomorphism 
$$ q(n,m)^*: H(J(n) \otimes (t); Q_1) \ra H(J(m);Q_1).$$
Equivalently, we determine when the $A$--module $Q(n,m)$ is $Q_1$--acyclic.

First of all, $q(n, m)^*$ is not a $Q_1$-isomorphism if $0 \leq n \leq 1$, since then $H(J(n) \otimes (t); Q_1)$ is one-dimensional and concentrated in degree 2 or 3, and this is not isomorphic to $H(J(m); Q_1)$ for any $m$. Thus, we assume hereafter that $n \geq 2$.

From \cite{kuhn dl}, we learn that the short exact sequence 
$$ 0 \ra J(m) \ra Q(n,m) \ra \Sigma^{-1}J(n) \ra 0$$
splits when $m<2n-1$, so that $Q(n,m) \simeq J(m) \oplus \Sigma^{-1}J(n)$ and is not $Q_1$--acyclic.  Furthermore, when $m=2n-1$, the sequence identifies with one desuspension of a Mahowald sequence, and we learn that $Q(n,2n-1) \simeq \Sigma^{-1}J(2n)$ and so also not $Q_1$--acyclic.  Summarizing:
\begin{prop}  If $q(n,m)^*$ is a $Q_1$--isomorphism, then $m \geq 2n$.
\end{prop}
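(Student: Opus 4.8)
The plan is to show that both claimed phenomena — splitting for $m < 2n-1$ and identification with a desuspended Mahowald sequence for $m = 2n-1$ — are already recorded in \cite{kuhn dl}, and then to read off the $Q_1$--homology consequence. The key point is that $Q(n,m)$ represents a class in $\Ext_A^{1,1}(\Sigma^{-1}J(n), J(m))$, which after shifting is the class $Q^{m-n}$ in $\Ext_A^{1,1}(J(n), J(m))$ obtained by Yoneda splice with the Dyer--Lashof operation. The main theorem of \cite{kuhn dl} identifies these Ext classes with the action of the Dyer--Lashof algebra on $\Ext_A^{\star,\star}(-, J(\star))$, and the instability/excess relations in that algebra force $Q^r$ acting out of an internal degree $n$ class to vanish precisely when $r < n$, i.e. when $m - n < n$, which is $m < 2n$. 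So for $m < 2n-1$ (indeed for $m < 2n$ with $m \neq 2n-1$, though the borderline case needs separate care) the extension class is zero and the sequence splits: $Q(n,m) \simeq J(m) \oplus \Sigma^{-1}J(n)$.

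Next I would handle $m = 2n - 1$. Here one compares the defining short exact sequence $0 \to J(2n-1) \to Q(n,2n-1) \to \Sigma^{-1}J(n) \to 0$ with the Mahowald short exact sequence $0 \to \Sigma J(2n-1) \to J(2n) \xrightarrow{\cdot Sq^n} J(n) \to 0$ of \eqref{mahowald ses}, desuspended once. Both are extensions of $\Sigma^{-1}J(n)$ by $\Sigma^{-1}\cdot(\text{a copy of } J(2n-1))$ representing classes in the same one-dimensional $\Ext^1$ group, and the nontriviality of the Mahowald extension (which is standard, since $J(2n)$ is indecomposable in this range) together with the fact that $Q^{n-1}$ is the \emph{unique} nonzero class in the relevant $\Ext^{1,1}$ group (this is again part of the \cite{kuhn dl} identification, the boundary case of the excess bound) shows $Q(n,2n-1) \simeq \Sigma^{-1}J(2n)$.

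To finish, invoke \corref{m=1 cor}: for $n \geq 2$, $J(m)$ and $J(n)$ each have $2$--dimensional $Q_1$--homology. In the split case $Q(n,m) \simeq J(m) \oplus \Sigma^{-1}J(n)$, so $H(Q(n,m); Q_1)$ is $4$--dimensional and in particular nonzero, hence $Q(n,m)$ is not $Q_1$--acyclic. In the case $m = 2n-1$, $Q(n,2n-1) \simeq \Sigma^{-1}J(2n)$, and since $2n \geq 4$ this again has $2$--dimensional, hence nonzero, $Q_1$--homology. By \lemref{simplifying lemma} (equivalently \exref{tilde P example}), $q(n,m)^*$ fails to be a $Q_1$--isomorphism in all these cases, so $q(n,m)^*$ being a $Q_1$--isomorphism forces $m \geq 2n$.

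I expect the main obstacle to be the borderline case $m = 2n-1$: verifying that the extension $Q(n,2n-1)$ really is (one desuspension of) the Mahowald extension, rather than merely representing a class in the same $\Ext$ group, requires either pinning down $q(n,2n-1)^*$ explicitly via \eqref{defining property of J(n) again} or appealing carefully to the precise statement of the $\Ext$-level Dyer--Lashof identification in \cite{kuhn dl}, where the $Q^{n-1}$ operation sits exactly at the edge of the excess bound and one must confirm it is nonzero. The split range $m < 2n-1$ is comparatively soft, as it only requires the \emph{vanishing} of the extension class, which is the generic (low-excess) behavior of the Dyer--Lashof action.
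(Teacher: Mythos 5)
Your proposal takes essentially the same route as the paper: invoke \cite{kuhn dl} for the splitting $Q(n,m) \simeq J(m)\oplus\Sigma^{-1}J(n)$ when $m<2n-1$ and for the identification $Q(n,2n-1)\simeq\Sigma^{-1}J(2n)$ with a desuspended Mahowald sequence, then read off non-acyclicity from \corref{m=1 cor} via \lemref{simplifying lemma}. One small caution: the excess heuristic you offer (``$Q^r$ vanishes out of internal degree $n$ when $r<n$, i.e.\ $m<2n$'') is stated one unit too generously and in fact contradicts the nontriviality of the Mahowald class at $m=2n-1$; the bound the paper actually quotes from \cite{kuhn dl} is splitting for $m<2n-1$, so it is cleaner to cite that directly than to patch the $r=n-1$ case after the fact.
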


Now comes our first use of a key trick.  

Since $H(F(1);Q_1) = \langle t^2, t^8, t^{16}, \dots \rangle$ while $H((t);Q_1) = \langle t^2 \rangle$, the inclusion $F(1) \hra (t)$ clearly induces a surjection on $Q_1$--homology.  The next lemma follows.
  
\begin{lem} Let $\bar q(n,m): J(n) \otimes F(1) \ra J(m)$ be the composite $$J(n) \otimes F(1) \hra J(n) \otimes (t) \xra{q(n,m)^*} J(m).$$  Then $q(n,m)^*$ will be a $Q_1$--isomorphism if and only if $\bar q(n,m)$  induces a surjection on $Q_1$--homology.
\end{lem}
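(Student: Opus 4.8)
The plan is to reduce the lemma to two elementary inputs: the Künneth theorem for $Q_1$--homology (legitimate since $Q_1$ is a derivation, as used in \exref{tilde P example}), and the observation recorded just above that the inclusion $\iota\colon F(1)\hra (t)$ is surjective on $Q_1$--homology. Write $\sigma\colon H(J(n)\otimes F(1);Q_1)\ra H(J(n)\otimes (t);Q_1)$ for the map induced by $\id_{J(n)}\otimes\iota$; the whole lemma will come down to the fact that $\sigma$ is onto.

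First I would spell out the surjectivity of $\sigma$. By Künneth over $\Z/2$ we have $H(J(n)\otimes (t);Q_1)\cong H(J(n);Q_1)\otimes H((t);Q_1)$, and since $H((t);Q_1)=\langle t^2\rangle$ this says every $Q_1$--homology class of $J(n)\otimes (t)$ is represented by a cycle of the form $c\otimes t^2$ with $c\in J(n)$ a $Q_1$--cycle. But $t^2=t^{2^1}$ already lies in $F(1)$ and $Q_1 t^2=0$ there, so $c\otimes t^2$ is a cycle in $J(n)\otimes F(1)$ mapping to it; equivalently, tensor the identity $\iota_*(t^2)=t^2$ with $H(J(n);Q_1)$. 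Either way $\sigma$ is surjective.

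Next I would observe that, by construction, $\bar q(n,m)$ on $Q_1$--homology is the composite $q(n,m)^*\circ\sigma$, so $\IM\bar q(n,m)=\IM q(n,m)^*$ inside $H(J(m);Q_1)$; in particular $\bar q(n,m)$ is surjective on $Q_1$--homology if and only if $q(n,m)^*$ is. Finally, for $n,m\geq 2$ both $H(J(n)\otimes (t);Q_1)$ (which is $H(J(n);Q_1)$ shifted up by $|t^2|=2$) and $H(J(m);Q_1)$ are $2$--dimensional by \corref{m=1 cor}, so any $Q_1$--surjection $q(n,m)^*$ between them is automatically an isomorphism, while the cases $m\leq 1$ are degenerate for degree reasons; combining these gives the asserted equivalence. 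I do not expect a genuine obstacle here — the only nonformal input is the surjectivity of $\iota_*$, immediate from the bases $H(F(1);Q_1)=\langle t^2,t^8,t^{16},\dots\rangle$ and $H((t);Q_1)=\langle t^2\rangle$, and the rest is Künneth plus a dimension count. The role of the lemma is strategic: it replaces the implicitly defined map $q(n,m)^*$ by the composite $\bar q(n,m)$ with the more tractable module $F(1)$, whose $Q_1$--homology is spread out in the degrees $2^k$, and deciding exactly when $\bar q(n,m)$ is onto is the real work of the following sections.
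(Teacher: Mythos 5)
Your proof is correct and fills in precisely the argument the paper leaves implicit (the paper merely states that the lemma "follows" from the surjectivity of $F(1)\hra (t)$ on $Q_1$--homology). The two inputs you isolate — the Künneth isomorphism making $\sigma$ onto, and the dimension count from \corref{m=1 cor} that upgrades a surjection between $2$--dimensional spaces to an isomorphism — are exactly the intended justification.
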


Under the condition that $m \geq 2n$, the map $\bar q(n,m)$ turns out to be much easier to describe than $q(n,m)^*$.

\begin{lem} \label{q bar lemma}  Let $m \geq 2n$.  If we write $m$ as $m=l+2^k$ with $l<2^k$, then $\bar q(n,m)$ agrees with the composite
\begin{equation} \label{q bar factorization} J(n) \otimes F(1) \xra{p(n,l)^* \otimes p_k} J(l) \otimes J(2^k) \xra{\mu} J(m).
\end{equation}
\end{lem}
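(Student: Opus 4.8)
The plan is to use the defining property of $J(m)$ as a $\U$--injective, namely the natural isomorphism $\Hom_A(M,J(m)) \simeq M^{m\vee}$ of (\ref{defining property of J(n) again}). Both $\bar q(n,m)$ and the composite $\mu \circ (p(n,l)^* \otimes p_k)$ are $A$--module maps out of the unstable module $J(n) \otimes F(1)$ into $J(m)$, so to prove they are equal it suffices to check that they induce the same map in degree $m$, i.e.\ the same element of $(J(n) \otimes F(1))^{m\vee}$. Equivalently, since everything in sight is a map of unstable modules, it is enough to verify that the two composites agree on each standard basis element of $(J(n) \otimes F(1))^m$.

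First I would describe $(J(n) \otimes F(1))^m$ explicitly. Recall $F(1)$ has $\Z/2$--basis $\{t^{2^i} : i \geq 0\}$ with $t^{2^i}$ in degree $2^i$, and $J(n)^j \simeq F(j)^{n\vee}$ vanishes for $j \geq n$ and has top class $x_0^n$ in degree $n$. So a basis element of $(J(n) \otimes F(1))^m$ has the form $b \otimes t^{2^i}$ with $b \in J(n)^{m - 2^i}$; since $J(n)$ is concentrated in degrees $\leq n$, we need $m - 2^i \leq n$, i.e.\ $2^i \geq m - n \geq n$ (using $m \geq 2n$). Writing $m = l + 2^k$ with $l < 2^k$, one checks that $2^k$ is the unique such power of $2$: indeed $2^i \geq m-n$ forces $2^i > l$ (as $m - n \geq n > l$ would need a separate small check, but in any case $2^k \le 2^i$ combined with $2^i \le m = l + 2^k < 2^{k+1}$ pins down $i = k$), and then $m - 2^k = l$, so the only basis elements of $(J(n)\otimes F(1))^m$ are those of the form $c \otimes t^{2^k}$ with $c$ a basis element of $J(n)^l$, i.e.\ $c$ a standard monomial in the $x_j$'s lying in $J(n)^l$. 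On such an element, $p(n,l)^* \otimes p_k$ sends $c \otimes t^{2^k}$ to $p(n,l)^*(c) \otimes x_0^{2^k}$ by \lemref{pk lemma}, and $p(n,l)^*(c)$ is the corresponding standard basis element of $J(l)^l$ (nonzero, by definition of $p(n,l)^*$), which is $x_0^l$; then $\mu(x_0^l \otimes x_0^{2^k}) = x_0^m \neq 0$ is the top class of $J(m)$. On the other hand $\bar q(n,m)$, being the restriction of $q(n,m)^*$ (precomposed with $F(1) \hra (t)$), is by definition nonzero on each standard basis element of $(J(n) \otimes (t))^m$, hence nonzero on each basis element $c \otimes t^{2^k}$ of $(J(n)\otimes F(1))^m$, landing in the top degree $J(m)^m \simeq \Z/2$. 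So both maps send every basis element of $(J(n)\otimes F(1))^m$ to the nonzero class $x_0^m$; they agree in degree $m$, hence agree.

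The main obstacle is the bookkeeping in the previous paragraph: one must carefully justify that $2^k$ really is the \emph{unique} power of $2$ that can appear in the $F(1)$--tensor factor of a degree-$m$ basis element, using both inequalities $m \geq 2n$ and $l < 2^k$; and one must make sure the relevant unstable-module maps are genuinely determined by their degree-$m$ components (this is where injectivity of $J(m)$, i.e.\ (\ref{defining property of J(n) again}), and the fact that $J(n) \otimes F(1)$ is unstable are both used). Once these points are nailed down, the identification of the two maps is immediate since both are characterized by the same nonvanishing-in-top-degree property.
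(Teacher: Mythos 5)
Your overall strategy is the same as the paper's: use that an $A$--module map into the $\U$--injective $J(m)$ is determined by its component in degree $m$, identify the standard basis of $(J(n)\otimes F(1))^m$, and check that both maps send every basis element there to $x_0^m$. The crux for both proofs is showing that each degree-$m$ basis element $b\otimes t^{2^i}$ must have $i = k$, where $m = l + 2^k$ with $l < 2^k$.

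It is exactly at this step that your argument has a gap. From $|b| = m - 2^i \leq n$ you correctly get $2^i \geq m - n$, and you then claim $2^i > l$ and ``in any case $2^k \leq 2^i$.'' But $2^i > l$ does not imply $2^i \geq 2^k$ (for instance $l = 1$, $k = 3$, $2^i = 2$), and you never establish $2^k \leq 2^i$ by any other route; the parenthetical simply asserts the one inequality that actually requires an argument. The paper closes this by ruling out $i > k$ and $i < k$ separately: $i > k$ forces $|b| = l + 2^k - 2^i \leq l - 2^k < 0$, while $i < k$ forces $2n \geq 2|b| \geq 2l + 2^k > l + 2^k = m$, contradicting $m \geq 2n$. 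Alternatively you could repair your line of reasoning by observing that $m - n \geq m/2 \geq 2^{k-1}$, with strict inequality when $l > 0$, so that $2^i \geq 2^k$; combined with $2^i \leq m < 2^{k+1}$ this gives $i = k$. Note that both the paper's argument and this repair tacitly use $l > 0$: when $l = 0$, $m = 2n$, and $n = 2^{k-1}$ there is in fact a degree-$m$ basis element with $i = k-1$, so the stated factorization degenerates there (since $p(n,0)^\ast = 0$). This exceptional case is harmless for the applications -- $p(n,0)^\ast$ is never a $Q_1$--isomorphism -- but your writeup, like the paper's, elides it.
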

\begin{proof}  $\bar q(n,m)$ is the unique $A$--module map that is nonzero on every basis element of $(J(n) \otimes F(1))^m$, so we need to show that our composite has this same property.  Elements of the form $x \otimes t^{2^k}$, with $x$ a standard basis element of $J(n)^{l}$, certainly map nonzero:
$$ \bar q(n,m)(x\otimes t^{2^k}) = \mu(p(n,l)^*(x) \otimes p_k(t^{2^k})) = \mu(x_0^{l} \otimes x_0^{2^k}) = x_0^m.$$
We now observe that our condition that $m\geq 2n$ implies that every basis element of $[J(n) \otimes F(1)]^m$ has this form: if $x \otimes t^{2^i} \in [J(n) \otimes F(1)]^m$ then $i=k$.  If $i>k$, then $|x| = l + (2^k - 2^i) \leq l - 2^k < 0$, which can't happen.  If $i<k$, then  
$2n \geq 2|x| = 2(l+2^k-2^i) \geq 2(l+2^{k-1}) = 2l + 2^k > l+2^k = m$,
which also can't happen.
\end{proof}

\begin{prop}  \label{reduce from q to p prop} Let $m \geq 2n$.  If we write $m$ as $m=l+2^k$ with $l<2^k$, then $q(n,m)^*$ will be a $Q_1$--isomorphism if and only if $p(n,l)^*$ is a $Q_1$--isomorphism.
\end{prop}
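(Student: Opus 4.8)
The plan is to push the factorization of $\bar q(n,m)$ provided by \lemref{q bar lemma} through $Q_1$--homology and do all the bookkeeping inside the ring $R := H(J(\star)^*;Q_1)$, which by \propref{Qm homology prop} is $\Z/2[x_0,x_1,x_2^2,x_3^2,\dots]/(x_k^4\mid k\geq 0)$. Here $R$ inherits the weight grading from $J(\star)^*$ (where $x_k$ has weight $2^k$, this being preserved by $Q_1$), its weight-$j$ summand $R_{(j)}$ is exactly $H(J(j);Q_1)$, and the maps $\mu$ make $R$ a ring. We already know, by the lemma immediately preceding \lemref{q bar lemma}, that $q(n,m)^*$ is a $Q_1$--isomorphism if and only if $\bar q(n,m)$ is surjective on $Q_1$--homology, so it suffices to analyze the latter. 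Since $Q_1$ is a derivation, $H(-;Q_1)$ is monoidal, so applying it to the composite $J(n)\otimes F(1)\xra{p(n,l)^*\otimes p_k}J(l)\otimes J(2^k)\xra{\mu}J(m)$ of \lemref{q bar lemma} exhibits $\bar q(n,m)$ on $Q_1$--homology as
$$H(J(n);Q_1)\otimes H(F(1);Q_1)\xra{\ p(n,l)^*\otimes p_k\ } R_{(l)}\otimes R_{(2^k)}\xra{\ \mu\ } R_{(m)},$$
where we reuse the names for the induced maps. Note that $m\geq 2n\geq 4$ and $l<2^k$ force $2^{k+1}>4$, hence $k\geq 2$.

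Next I would compute the induced map $p_k\colon H(F(1);Q_1)\to R_{(2^k)}$. By \lemref{pk lemma}, $p_k(t^{2^i}) = x_{k-i}^{2^i}$; evaluating on the basis $\{[t^2],[t^8],[t^{16}],\dots\}$ of $H(F(1);Q_1)$ gives $[t^2]\mapsto[x_{k-1}^2]$, while $[t^{2^j}]\mapsto 0$ for every $j\geq 3$ (we have $x_{k-j}^{2^j}=0$ in $R$ for $3\leq j\leq k$, since $x_i^4=0$ in $R$ for all $i$ and $2^j\geq 4$, and $p_k(t^{2^j})=0$ for degree reasons when $j>k$). As $k\geq 2$ the class $[x_{k-1}^2]$ is nonzero in $R$, so the image of $p_k$ on $Q_1$--homology is the line $\langle[x_{k-1}^2]\rangle$. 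Consequently the image of $\bar q(n,m)$ on $Q_1$--homology is $x_{k-1}^2\cdot\mathrm{im}\,p(n,l)^*$, sitting inside $R_{(m)}=H(J(m);Q_1)$, with $p(n,l)^*\colon H(J(n);Q_1)\to R_{(l)}$ now denoting the induced map.

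The technical heart is the claim that multiplication by $x_{k-1}^2$ is an \emph{injection} $R_{(l)}\hookrightarrow R_{(m)}$, and this is exactly where the hypothesis $l<2^k$ is used. Taking the evident monomial basis of $R$, namely $x_0^a x_1^b\prod_{j\geq 2}(x_j^2)^{c_j}$ with $a,b\in\{0,\dots,3\}$ and $c_j\in\{0,1\}$: if $k\geq 3$, a monomial of weight $l<2^k$ cannot contain the generator $x_{k-1}^2$ (which has weight $2^k$), so multiplication by $x_{k-1}^2$ carries distinct weight-$l$ monomials to distinct nonzero monomials; if $k=2$, then $l<4$ forces the exponent $b$ in any weight-$l$ monomial to be $\leq 1$, so multiplication by $x_1^2$ is again injective on those monomials. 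This is the step I expect to demand the most care, since one must be sure the injectivity is controlled by $l<2^k$ alone, with no further constraint on $n$ or $l$ sneaking in.

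Finally I would finish with a dimension count, using \corref{m=1 cor}: $H(J(i);Q_1)$ has dimension $2$ for $i\geq 2$ and dimension $1$ for $i\in\{0,1\}$. In particular $\dim H(J(n);Q_1)=\dim H(J(m);Q_1)=2$, while $\dim R_{(l)}\leq 2$. Since multiplication by $x_{k-1}^2$ is injective, $\bar q(n,m)$ is surjective on $Q_1$--homology if and only if $p(n,l)^*\colon H(J(n);Q_1)\to R_{(l)}$ has $2$--dimensional image, if and only if $p(n,l)^*$ is injective, and an injection from the $2$--dimensional space $H(J(n);Q_1)$ into the at-most-$2$--dimensional space $R_{(l)}=H(J(l);Q_1)$ must be an isomorphism. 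Hence $\bar q(n,m)$ is surjective on $Q_1$--homology if and only if $p(n,l)^*$ is a $Q_1$--isomorphism, which together with the reduction in the first paragraph completes the proof.
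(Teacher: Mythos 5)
Your proof is correct and takes essentially the same route as the paper's: reduce to surjectivity of $\bar q(n,m)$ on $Q_1$--homology, factor through $p(n,l)^*\otimes p_k$, identify the image of $(p_k)_*$ as the line spanned by $x_{k-1}^2$, and then use that multiplication by $x_{k-1}^2$ carries $H(J(l);Q_1)$ isomorphically onto $H(J(m);Q_1)$. Where the paper simply asserts that isomorphism (splitting into the cases $l\leq 1$ and $l\geq 2$), you supply a proof via the monomial basis of $R=H(J(\star)^\ast;Q_1)$ together with a dimension count, a welcome bit of extra care that makes explicit how the hypothesis $l<2^k$ forces the needed injectivity.
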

\begin{proof}  We need to check that the composite (\ref{q bar factorization}) is a $Q_1$--surjection if and only if $p(n,l)^*$ is a $Q_1$--isomorphism.  

The first observation is that the image of $p(n,l)^* \otimes p_k$ in $Q_1$-homology will be precisely $(\IM p(n,l)^*) \otimes \langle x_{k-1}^2 \rangle$. If $0 \leq l \leq 1$, then neither $p(n,l)^*$ nor (\ref{q bar factorization}) is a $Q_1$-surjection since $\dim(H(J(l); Q_1)) = 1$. If $l \geq 2$, then $\mu: J(l) \otimes J(2^k) \ra J(m)$ induces an isomorphism $\mu_*: H(J(l);Q_1) \otimes \langle x_{k-1}^2\rangle \simeq H(J(m);Q_1)$, and the proposition follows.
\end{proof}

\section{Proof of \thmref{main thm}: part 2} \label{main proof part 2}

Now we focus on the case when our pair are both even, so we are determining for which pairs $(n,l)$ the map
$$ p(2n,2l)^*: H(J(2n); Q_1) \ra H(J(2l);Q_1)
$$
is an isomorphism of graded vector spaces of total dimension 2.

For starters, one clearly needs that these two graded vector spaces are isomorphic.  Call $(n,l)$ a {\em good pair} if this is the case.

\begin{prop} \label{good pair prop} $(n,l)$ is a good pair if and only if $\alpha_2(l) = \alpha_2(n)$ and $\nu_2(l) = \nu_2(n)$.
\end{prop}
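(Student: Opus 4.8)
The plan is to make the ``good pair'' condition completely explicit using \corref{m=1 cor} and then to recognize it as a statement about binary expansions.

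First I would recall that, by \corref{m=1 cor}, for every $j \geq 1$ the graded vector space $H(J(2j);Q_1)$ is two--dimensional, spanned by $x(j)^2$ in degree $2\alpha_2(j)$ and by $x_1 x(j-1)^2$ in degree $1+2\alpha_2(j-1)$. In particular one of its two nonzero degrees is even and the other is odd, so a graded isomorphism cannot interchange them. Hence, assuming $n,l\geq 1$ (the case $l=0$ being trivial, since then $\alpha_2(l)=0\neq\alpha_2(n)$ and $J(0)=\Z/2$ is $1$--dimensional, hence not $Q_1$--isomorphic to any $J(2n)$ with $n\geq 1$), the pair $(n,l)$ is good if and only if the even degrees agree and the odd degrees agree, i.e.\ if and only if $\alpha_2(n)=\alpha_2(l)$ \emph{and} $\alpha_2(n-1)=\alpha_2(l-1)$.

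The remaining, purely combinatorial, point is the identity $\alpha_2(N-1)=\alpha_2(N)+\nu_2(N)-1$ for all $N\geq 1$: if $\nu_2(N)=j$, then the binary expansion of $N$ ends in a single $1$ followed by exactly $j$ zeros, and passing to $N-1$ replaces that trailing block $1\underbrace{0\cdots0}_{j}$ by $0\underbrace{1\cdots1}_{j}$ while leaving all higher digits unchanged, which changes the digit sum by $j-1$. Granting this, whenever $\alpha_2(n)=\alpha_2(l)$ we obtain $\alpha_2(n-1)-\alpha_2(l-1)=\nu_2(n)-\nu_2(l)$, so the second condition $\alpha_2(n-1)=\alpha_2(l-1)$ is equivalent to $\nu_2(n)=\nu_2(l)$. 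Combining the two paragraphs yields the proposition. I expect no real obstacle here: the only thing needing a little care is the bookkeeping of the small/degenerate cases mentioned above, and the single substantive ingredient is the elementary identity for $\alpha_2(N-1)$, which is the hinge of the argument.
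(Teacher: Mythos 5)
Your proof is correct and takes essentially the same route as the paper: read off the two nonzero degrees of $H(J(2j);Q_1)$ from \corref{m=1 cor}, match them to get $\alpha_2(n)=\alpha_2(l)$ and $\alpha_2(n-1)=\alpha_2(l-1)$, then convert the second condition to $\nu_2(n)=\nu_2(l)$ via the identity $\alpha_2(N-1)-\alpha_2(N)=\nu_2(N)-1$ (the paper's \lemref{alpha nu lemma}). The only difference is that you supply the short binary-digit argument for that identity, which the paper states without proof.
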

\begin{proof}  $H(J(2l); Q_1)$ is represented by the classes 
$x(l)^2$ and $ x_1x(l-1)^2$ in degrees 
$2\alpha_2(l)$ and  $2\alpha_2(l-1) +1$, and, similarly,
$H(J(2n); Q_1)$ is represented by the classes $x(n)^2$ and $x_1x(n-1)^2$ in degrees $2\alpha_2(n)$ and $2\alpha_2(n-1) +1$.

Matching degrees and simplifying, we find that $(n,l)$ will be good if and only if 
$$ \alpha_2(l) = \alpha_2(n) \text{\hspace{.1in} and \hspace{.1in}} \alpha_2(l-1)  = \alpha_2(n-1),
$$
or, equivalently, if
$$ \alpha_2(l) = \alpha_2(n) \text{\hspace{.1in} and \hspace{.1in}} \alpha_2(l-1) - \alpha_2(l)  = \alpha_2(n-1) - \alpha_2(n).
$$
The next observation finishes the proof.
\end{proof}
\begin{lem} \label{alpha nu lemma}
For all $n\geq 1$, $\alpha_2(n-1) - \alpha_2(n) = \nu_2(n)-1$.
\end{lem}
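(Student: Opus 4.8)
The plan is to read the identity directly off the binary expansion of $n$. Set $v = \nu_2(n)$, so that $n = 2^v(2a+1)$ for some $a \ge 0$; equivalently, the binary expansion of $n$ ends in a single $1$ followed by exactly $v$ zeros, preceded by the (possibly empty) binary expansion of $a$. First I would compute $n-1$ in binary: subtracting $1$ flips the lowest $1$ of $n$ to a $0$ and flips the $v$ trailing $0$'s to $1$'s, leaving all higher bits unchanged. Hence $\alpha_2(n-1) = \alpha_2(n) - 1 + v$, which rearranges to the claimed formula $\alpha_2(n-1) - \alpha_2(n) = \nu_2(n) - 1$.

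Alternatively, and perhaps cleaner to write up rigorously, I would argue by induction on $n$, splitting on the parity of $n$ and using the elementary identities $\alpha_2(2k) = \alpha_2(k)$, $\alpha_2(2k+1) = \alpha_2(k) + 1$, and $\nu_2(2k) = \nu_2(k) + 1$. The base case $n = 1$ reads $\alpha_2(0) - \alpha_2(1) = 0 - 1 = \nu_2(1) - 1$. If $n = 2k+1$ is odd then $\nu_2(n) = 0$ and $\alpha_2(n-1) - \alpha_2(n) = \alpha_2(k) - (\alpha_2(k)+1) = -1$, as required. If $n = 2k$ is even with $k \ge 1$, then $n - 1 = 2(k-1)+1$, so $\alpha_2(n-1) - \alpha_2(n) = (\alpha_2(k-1)+1) - \alpha_2(k)$, and the inductive hypothesis applied to $k$ turns this into $(\nu_2(k) - 1) + 1 = \nu_2(k) = \nu_2(n) - 1$.

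There is essentially no obstacle here; the only thing needing care is the bookkeeping of edge cases — the empty binary expansion of $0$ in the first argument, and checking $k \ge 1$ in the even case of the induction so that the inductive hypothesis applies. I would present the binary-expansion argument as the main proof, since it makes the statement transparent and is exactly the shape of reasoning used repeatedly in the Margolis-homology computations that follow.
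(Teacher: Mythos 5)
Your proof is correct. The paper simply states \lemref{alpha nu lemma} without proof, labelling it an ``observation'' that finishes the proof of \propref{good pair prop}, so there is no paper argument to compare against; your binary-expansion argument (writing $n = 2^{v}(2a+1)$ and reading off that $n-1$ replaces a trailing $1\underbrace{0\cdots0}_{v}$ with $0\underbrace{1\cdots1}_{v}$) is the natural one, and it matches the style of binary-expansion reasoning the paper uses elsewhere, e.g.\ in the proof of \lemref{magic lemma}. The inductive alternative is also correct, with the edge cases ($n=1$ base case, and $k\geq 1$ in the even branch so the inductive hypothesis applies) handled properly.
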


Now suppose that $\alpha_2(l) = \alpha_2(n)$ and $\nu_2(l) = \nu_2(n)$.   If we let $d = \alpha_2(n)-1$ and $i = \nu_2(n)$, then these two conditions say that $n$ and $l$ have the form
$$ l = 2^i + 2^{j_1} + \cdots + 2^{j_d} \text{ with } i < j_1 < \cdots < j_d$$
and 
$$ n = 2^i + 2^{k_1} + \cdots + 2^{k_d} \text{ with } i < k_1 < \cdots < k_d.$$

\begin{thm} \label{p(2n,2l) iso thm}  In this situation, 
$ p(2n,2l)^*: H(J(2n);Q_1) \ra H(J(2l);Q_1)$
is an isomorphism if 
\begin{equation*}\tag{$\clubsuit$} j_1 \leq k_1 < j_2 \leq k_2 < \cdots < j_d \leq k_d
\end{equation*}
and is zero otherwise.
\end{thm}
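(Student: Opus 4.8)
The plan is to reduce the statement to an explicit description of the map $p(2n,2l)^*$ on the two-dimensional $Q_1$-Margolis homology groups, and then to recognize when that $2\times 2$ "matrix" is invertible. By \propref{good pair prop} and the discussion preceding the theorem, both $H(J(2n);Q_1)$ and $H(J(2l);Q_1)$ have cycle representatives in the same two degrees: the "even" class represented by $x(n)^2$ (resp. $x(l)^2$) in degree $2d+2$, and the "odd" class represented by $x_1 x(n-1)^2$ (resp. $x_1 x(l-1)^2$) in degree $2\alpha_2(n-1)+1$. Since $p(2n,2l)^*$ is degree-preserving, it is determined by two scalars in $\Z/2$: the coefficient with which it carries $[x(n)^2]$ to $[x(l)^2]$, and the coefficient with which it carries $[x_1x(n-1)^2]$ to $[x_1x(l-1)^2]$. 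So the whole theorem comes down to computing these two coefficients and checking that \emph{both} are $1$ exactly under hypothesis $(\clubsuit)$, and at least one is $0$ otherwise.

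First I would handle the "even" class. The class $x(l)^2 \in J(2l)$ lives in degree $2\alpha_2(l) = 2d+2$, which is the \emph{bottom} degree of $J(2l)$; similarly $x(n)^2$ is the bottom class of $J(2n)$. The recipe for $p(2n,2l)^*$ says it is nonzero on every standard basis element of $J(2n)$ in degree $2l$, and one must chase down what it does in the bottom degree $2d+2$. Here I expect to use the algebra structure of $J(\star)^*=\Z/2[x_0,x_1,\dots]$ together with \lemref{pk lemma} / \lemref{Sq2^k lemma}-type computations of how the building-block maps $p_k\colon F(1)\to J(2^k)$ behave, pushed through the multiplication $\mu$. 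Writing $l$ and $n$ in terms of their binary exponents $\{i,j_1,\dots,j_d\}$ and $\{i,k_1,\dots,k_d\}$, the map $p(2n,2l)^*$ should factor (up to the reductions already in the paper) through a product of single-generator maps $\cdot Sq^{2^a}\colon J(2^{b})\to J(2^{b'})$, and tracking the effect on the bottom class $x(n)^2=x_i^2x_{k_1}^2\cdots x_{k_d}^2$ should give a product of binomial coefficients $\prod \binom{\,?\,}{\,?\,}\bmod 2$ which is $1$ precisely when each exponent $k_r$ "covers" the corresponding $j_r$ in the sense forced by $(\clubsuit)$. The matching constraint $j_r\le k_r$ is exactly a Lucas/Kummer condition on these binomials.

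Next I would handle the "odd" class $x_1 x(n-1)^2 \mapsto x_1 x(l-1)^2$. By \lemref{alpha nu lemma} both sit in degree $2\alpha_2(n-1)+1 = 2(d-1+i)+1$ (using $\nu_2(n)=\nu_2(l)=i$). Here $n-1 = (2^i-1)+2^{k_1}+\cdots+2^{k_d}$ has binary digits $\{0,1,\dots,i-1,k_1,\dots,k_d\}$, and similarly for $l-1$; the "$x_1$" factor records the single $Q_1$-nontrivial slot. I would repeat the factorization/binomial-chase argument, now applied to $x(n-1)^2$, and find a second product of mod-$2$ binomial coefficients. The key point to verify is that this second product is governed by the \emph{strict} inequalities $k_r < j_{r+1}$ in $(\clubsuit)$ (whereas the even class saw $j_r\le k_r$), so that \emph{together} the two products are both $1$ iff the full interleaving $j_1\le k_1<j_2\le k_2<\cdots<j_d\le k_d$ holds. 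When $(\clubsuit)$ fails, I would argue that whichever inequality is violated forces the corresponding binomial product to vanish, killing one of the two coefficients and hence the determinant, so $p(2n,2l)^*$ is not an isomorphism — and since the source and target are isomorphic as graded vector spaces, a non-isomorphism that is degree-preserving between two $1$-dimensional pieces must be zero on at least one piece; showing it is in fact zero overall (the "zero otherwise" clause) requires checking both coefficients vanish whenever any single inequality in $(\clubsuit)$ fails, which follows because a violation of $j_r\le k_r$ kills the even coefficient while a violation of $k_{r-1}<j_r$ kills the odd one, and one then has to also rule out the mixed case by a short combinatorial argument that a single index failure propagates.

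The main obstacle I anticipate is the first step: making the implicitly-defined map $p(2n,2l)^*$ explicit in the relevant low degrees. The definition only pins $p(n,l)^*$ down in degree $l$ (via \eqref{defining property of J(n) again}), so computing its effect on bottom-degree classes requires genuinely propagating that normalization down through the $A$-module structure — presumably by writing $p(2n,2l)^*$ as an iterated composite of the basic maps $\cdot Sq^{2^a}$ from the Mahowald sequences \eqref{mahowald ses}, using the ring-homomorphism description $\cdot Sq^\star\colon J(2\star)\to J(\star)$, and then carefully bookkeeping which monomial of $J(2l)$ receives the image of $x(n)^2$ (resp. $x_1x(n-1)^2$). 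Once the two coefficients are exhibited as explicit products of mod-$2$ binomials indexed by the $j_r$'s and $k_r$'s, the equivalence with $(\clubsuit)$ should be a clean application of the criterion recalled in the Remark after \thmref{main thm} (powers of $2$ in the smaller argument appear in the larger). I would isolate the purely combinatorial extraction of $(\clubsuit)$ from those binomial products as a separate lemma to keep the homological and combinatorial parts cleanly separated.
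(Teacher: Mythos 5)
Your high-level framing --- that the map is between two $2$-dimensional Margolis homology groups, determined by two degree-preserving scalars, one on the ``even'' class $[x(n)^2]\mapsto[x(l)^2]$ and one on the ``odd'' class $[x_1 x(n-1)^2]\mapsto[x_1 x(l-1)^2]$ --- is correct and matches the setup in the paper. But the structural claim you build your argument on is wrong, and this is the key gap. You propose that the even coefficient is controlled by the non-strict inequalities $j_r\leq k_r$ and the odd coefficient by the strict inequalities $k_r < j_{r+1}$, so that the two coefficients are governed \emph{independently} by disjoint halves of $(\clubsuit)$ and the isomorphism criterion arises as the conjunction of two separate binomial-product conditions. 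What the paper actually proves (Propositions~\ref{prop 1} and~\ref{prop 2}) is that the two coefficients are \emph{always equal}: both are $|S(J,K)|\bmod 2$, where $S(J,K)=\{\sigma\in S_d \mid k_c\geq j_{\sigma(c)}\ \forall c\}$, and $|S(J,K)|$ is odd iff the \emph{full} interleaving $(\clubsuit)$ holds. In particular, if $j_r\leq k_r$ holds for all $r$ but some strict inequality $k_r<j_{r+1}$ fails, your scheme would predict that the even coefficient is $1$ while the odd coefficient is $0$; in fact both are $0$ (e.g.\ $d=2$, $s(1)=s(2)=2$ gives $|S(J,K)|=2$). So the proposed route cannot be patched by ``ruling out the mixed case'' --- the mixed case simply does not occur, and recognizing that is the crux of the proof.

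The proof mechanism you would need is also genuinely different from the one you sketch. You propose to make $p(2n,2l)^*$ explicit by factoring it as an iterated composite of Mahowald-type maps $\cdot Sq^{2^a}$ and then bookkeeping binomial coefficients, but such a factorization is not established anywhere in the paper and is not at all evident: $p(n,l)^*$ is defined only implicitly via $\Hom_A(J(n),J(l))\simeq J(n)^{l\vee}$, and the Mahowald map $\cdot Sq^n\colon J(2n)\to J(n)$ is a single specific element of that $\Hom$ for the halving pair, not a building block for arbitrary $(n,l)$. The paper instead sidesteps the factorization problem entirely: it constructs a surjection $g\colon J(2^{i+1})\otimes F(1)^{\otimes d}\to J(2n)$ onto the $Q_1$-homology generators, observes that $(J(2^{i+1})\otimes F(1)^{\otimes d})^{2l}$ has a basis $\{b_\sigma\}_{\sigma\in S_d}$, dualizes to get a basis $\{h_\tau\}$ of $\Hom_A(J(2^{i+1})\otimes F(1)^{\otimes d}, J(2l))$, and then identifies $p\circ g = \sum_{\sigma\in S(J,K)} h_\sigma$ by computing $p(g(b_\sigma))$ using \lemref{pk lemma}. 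Since each $h_\sigma$ sends \emph{both} $Q_1$-homology generators to the correct target generators, the common scalar $|S(J,K)|$ falls out. This dualization-and-counting argument is the missing idea; without it, or a replacement that establishes the same equality of scalars, the proposal does not close. (The binomial coefficient $\binom{m-n-1}{n}$ does enter the paper --- but only afterward, in Lemma~\ref{magic lemma}, as a repackaging of $(\clubsuit)$, not as the engine of the Margolis-homology computation.)
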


The next lemma says that the combinatorial condition of this theorem can be packaged, for our purposes, in a rather simple way.  To explain how we discovered this, see Remark \ref{magic lemma remark}.

\begin{lem} \label{magic lemma}  Suppose $\alpha_2(m) = \alpha_2(n)+1$, $m=l+2^k$ with $l<2^k$, and write
$$ l = 2^{j_0} + \cdots + 2^{j_d} \text{ with } j_0 < \cdots < j_d$$
and 
$$ n = 2^{k_0} + \cdots + 2^{k_d} \text{ with } k_0 < \cdots < k_d.$$
Then
$\binom{n-m-1}{n} = 1 \mod 2$ if and only if $j_0 \leq k_0 < j_1 \leq k_1 < \cdots < j_d \leq k_d$ and $m\geq 2n$.
\end{lem}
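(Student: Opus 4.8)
The plan is to convert the mod $2$ binomial coefficient into a condition on binary expansions, via Lucas's and Kummer's theorems, and then match that condition against the chain of inequalities by bookkeeping borrows. First I would record the binary expansion of $m - 1$. Since $l < 2^k$ we have $\alpha_2(m) = \alpha_2(l) + 1$, so the hypothesis $\alpha_2(m) = \alpha_2(n) + 1$ forces $\alpha_2(l) = \alpha_2(n) = d + 1$; in particular $l$ and $n$ each have exactly $d+1$ binary digits. As $2^{j_0}$ is the lowest power of $2$ dividing $l$, we have $l - 1 = (2^{j_0} - 1) + 2^{j_1} + \cdots + 2^{j_d}$, so the nonzero binary digits of $m - 1 = 2^k + (l-1)$ occupy precisely the positions $\{0, 1, \dots, j_0 - 1\} \cup \{j_1, \dots, j_d\} \cup \{k\}$.

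Next I would translate the oddness of the binomial coefficient. Using the generating-function definition of binomial coefficients and the congruence $\binom{-a}{b} \equiv \binom{a + b - 1}{b} \pmod 2$ to pass to nonnegative entries, Lucas's theorem rewrites oddness as a digit-containment statement, which Kummer's theorem then rephrases as the assertion that the binary digits of $n$ are disjoint from those of $m - 1 - 2n$. A short argument at this point shows that oddness forces $m \geq 2n$ --- note $m = 2n$ is impossible, as $m = l + 2^k$ with $l < 2^k$ would then give $\alpha_2(l) = \alpha_2(n) - 1$ --- while for $m < 2n$ the coefficient is even; so one may assume $m \geq 2n$, hence $n < 2^k$ and $k_d < k$, for the remainder.

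The heart of the proof is the digit computation. One forms $m - 1 - 2n$ by subtracting, in binary, the number $2n$ with digits $k_0 + 1, \dots, k_d + 1$ from $m - 1$, whose digit pattern consists of the run of $1$'s in positions $0, \dots, j_0 - 1$, isolated $1$'s at $j_1, \dots, j_d$, and a $1$ at $k$. Removing $2^{k_s + 1}$ collapses the stretch of $m - 1$ between the neighbouring $j$'s into a block of $1$'s whose bottom sits at position $k_s$ exactly when $k_s$ is trapped in $j_s \leq k_s < j_{s+1}$ (convention $j_{d+1} = k$), and otherwise creates a $1$ at some $k_t$. Iterating over $s = 0, 1, \dots, d$ --- which I would organize as an induction on $d$ peeling off the lowest matched pair $(j_0, k_0)$ and reducing to the analogous statement for $l$ and $n$ with their top $d$ digits --- shows that the digits of $n$ are disjoint from those of $m - 1 - 2n$ precisely when $j_0 \leq k_0 < j_1 \leq k_1 < \cdots < j_d \leq k_d$, and that this interleaving condition itself forces $m \geq 2n$. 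One then checks that, for $m$ and $n$ even as in the application to \thmref{main thm}(a), this agrees with the condition $\binom{m - n - 2}{n} = 1 \bmod 2$, using $\binom{2a+1}{2b} \equiv \binom{a}{b} \equiv \binom{2a}{2b} \pmod 2$.

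The step I expect to be the main obstacle is exactly the borrow bookkeeping in forming $m - 1 - 2n$: borrows propagate across the gaps in the binary expansion of $m - 1$, so the inductive invariant --- a precise description of the binary expansion of the partial differences $m - 1 - 2\bigl(2^{k_s + 1} + \cdots + 2^{k_d + 1}\bigr)$ --- has to be formulated and verified carefully. Once that is in place, matching it to the chain $j_0 \leq k_0 < j_1 \leq \cdots \leq k_d$ is routine.
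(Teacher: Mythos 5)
Your approach is sound in outline and would work, but it takes a genuinely heavier route than the paper's. You propose to translate the binomial condition via Kummer's theorem into the assertion that the bits of $n$ are disjoint from those of $m-1-2n$, and then compute $m-1-2n$ by base-$2$ subtraction, tracking borrows and organizing the bookkeeping by an induction on $d$. The paper instead uses Lucas's criterion directly (bits of $n$ contained in bits of $m-n-1$) together with a decomposition that eliminates borrows entirely: writing $m-n-1 = (2^k-1-n)+l$, one sees that $2^k-1-n$ is simply the bit-complement of $n$ below position $k$ (all bits at positions $0,\dots,k-1$ set except at $k_0,\dots,k_d$). Adding $l$ to this number is then a very controlled addition, and the requirement that each bit $2^{k_c}$ appear in the sum becomes, via a pigeonhole observation, exactly the statement that each interval $(k_{c-1},k_c]$ contains one $j_b$ — which is the interleaving condition $(\diamondsuit)$ plus $m\geq 2n$. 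This trick converts what you correctly identify as your main obstacle (the borrow propagation across gaps in $m-1$) into a carry propagation inside an all-ones block, which is transparent and needs no induction. Your argument that $m\geq 2n$ (and the separate exclusion of $m=2n$) is also more circuitous than the paper's one-liner that a nonzero $\binom{m-n-1}{n}$ forces $m-n-1\geq n$, together with the observation that $m\geq 2n \Leftrightarrow k>k_d$. In short: your plan is correct and would likely succeed, but you are missing the decomposition $m-n-1=(2^k-1-n)+l$, which is what makes the paper's proof short; filling in the borrow bookkeeping you describe would be the longer road to the same destination.
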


\begin{cor} Suppose $m \geq 2n$, and let $m=l+2^k$ with $l<2^k$. Then $ p(2n,2l)^*: H(J(2n);Q_1) \ra H(J(2l);Q_1)$
is an isomorphism if an only if $\alpha_2(m) = \alpha_2(n) + 1$, $\nu_2(m) = \nu_2(n)$, and $\binom{m-n-1}{n} = 1 \mod 2$.
\end{cor}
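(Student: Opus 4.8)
The plan is to assemble the corollary from the three results of this section: \propref{good pair prop}, \thmref{p(2n,2l) iso thm}, and the combinatorial \lemref{magic lemma}. By \propref{good pair prop}, $(n,l)$ is a good pair if and only if $\alpha_2(l)=\alpha_2(n)$ and $\nu_2(l)=\nu_2(n)$; and since $m=l+2^k$ with $l<2^k$ --- so that passing from $l$ to $m$ raises $\alpha_2$ by one and leaves $\nu_2$ unchanged --- this is in turn equivalent to $\alpha_2(m)=\alpha_2(n)+1$ and $\nu_2(m)=\nu_2(n)$. On the other hand, because $p(2n,2l)^*$ preserves degree, if it is a $Q_1$--isomorphism then $H(J(2n);Q_1)$ and $H(J(2l);Q_1)$ are isomorphic as graded vector spaces, i.e.\ $(n,l)$ is a good pair. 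So it suffices to prove, under the hypothesis $m\geq 2n$ and the assumption that $(n,l)$ is a good pair, that $p(2n,2l)^*$ is a $Q_1$--isomorphism if and only if $\binom{m-n-1}{n}\equiv 1\bmod 2$.

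Assume then that $(n,l)$ is a good pair, and put $i=\nu_2(n)=\nu_2(l)$ and $d=\alpha_2(n)-1=\alpha_2(l)-1$, so that $n=2^i+2^{k_1}+\dots+2^{k_d}$ and $l=2^i+2^{j_1}+\dots+2^{j_d}$ with $i<j_1<\dots<j_d$ and $i<k_1<\dots<k_d$. Then \thmref{p(2n,2l) iso thm} applies and identifies ``$p(2n,2l)^*$ is a $Q_1$--isomorphism'' with the chain condition $(\clubsuit)$, namely $j_1\leq k_1<j_2\leq k_2<\dots<j_d\leq k_d$. I would then match this with the hypothesis of \lemref{magic lemma}, in which $l$ and $n$ are written with their full $(d+1)$--term binary expansions $l=2^{j_0}+\dots+2^{j_d}$, $n=2^{k_0}+\dots+2^{k_d}$, and the relevant chain is the longer string $j_0\leq k_0<j_1\leq k_1<\dots<j_d\leq k_d$, which I will call $(\clubsuit')$. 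The key observation is that $\nu_2(l)=\nu_2(n)$ forces $j_0=k_0=i$, so in $(\clubsuit')$ the inequality $j_0\leq k_0$ is automatic and the inequality $k_0<j_1$ is just $i<j_1$, already built into the expansion of $l$; hence $(\clubsuit')$ is equivalent to $(\clubsuit)$.

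Finally I would invoke \lemref{magic lemma}, whose hypotheses ($\alpha_2(m)=\alpha_2(n)+1$ and $m=l+2^k$ with $l<2^k$) hold here: it states that $\binom{m-n-1}{n}\equiv 1\bmod 2$ if and only if $(\clubsuit')$ holds and $m\geq 2n$. As $m\geq 2n$ is assumed, this reads ``$\binom{m-n-1}{n}\equiv 1\bmod 2$ if and only if $(\clubsuit')$''. Chaining this with the equivalence $(\clubsuit')\Leftrightarrow(\clubsuit)$ from the second paragraph, with \thmref{p(2n,2l) iso thm}, and with the reformulation of ``good pair'' from the first paragraph yields the asserted three--fold ``if and only if''. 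Essentially all the substance is already in \thmref{p(2n,2l) iso thm} and \lemref{magic lemma}, so the only point here requiring real care is the bookkeeping of the second paragraph --- correctly aligning the $d$--term chain $(\clubsuit)$ with the $(d+1)$--term chain $(\clubsuit')$ --- together with the straightforward small cases where $l\leq 1$.
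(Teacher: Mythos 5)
Your proposal is correct and matches the paper's intended argument: the corollary is stated without proof as an assembly of Proposition~\ref{good pair prop}, Theorem~\ref{p(2n,2l) iso thm}, and Lemma~\ref{magic lemma}, and your bookkeeping — in particular the observation that $\nu_2(l)=\nu_2(n)$ makes $j_0=k_0=i$ so that $(\clubsuit')$ collapses to $(\clubsuit)$ — is exactly the needed glue. The only small point worth making explicit is that $\alpha_2(m)=\alpha_2(l)+1$ is automatic from $m=l+2^k$, $l<2^k$, which is what lets you pass between the good-pair condition on $(n,l)$ and the hypotheses of Lemma~\ref{magic lemma} on $(n,m)$.
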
 

Note that this corollary combines with \propref{reduce from q to p prop} to prove \thmref{main thm}(a).

We postpone the proof of \lemref{magic lemma} to the end of the section.

\thmref{p(2n,2l) iso thm} follows immediately from the next two propositions.  To state these, we need some definitions and notation. 

Let $J = \{j_1, \dots, j_d\}$ and $K = \{k_1, \dots k_d\}$.  Let $S_d$ denote the $d$th symmetric group, and then let  $S(J,K) \subset S_d$ be the set
$$ S(J,K) = \{ \sigma \in S_d \ | \ k_c \geq j_{\sigma(c)}  \text{ for all } c \},$$
with cardinality $|S(J,K)|$.

\begin{prop} \label{prop 1} $ p(2n,2l)^*: H(J(2n);Q_1) \ra H(J(2l);Q_1)$ is `multiplication by $|S(J,K)|$'. More precisely, 
$$ p(2n,2l)^*(x(n)^2) = |S(J,K)| x(l)^2 \text{ and }$$ 
$$p(2n,2l)^*(x_1x(n-1)^2) = |S(J,K)| x_1x(l-1)^2.$$
\end{prop}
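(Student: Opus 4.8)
The plan is to compute $p(2n,2l)^*$ on the two distinguished $Q_1$-cycles $x(n)^2$ and $x_1 x(n-1)^2$ by first understanding the map $p(2n,2l)^*: J(2n) \to J(2l)$ at the level of standard basis elements, at least on those monomials that can contribute. Since $p(2n,2l)^*$ is characterized as the unique $A$-module map nonzero on every standard basis element of $J(2n)^{2l}$, its behavior in lower degrees is not free, but it is computable. First I would reduce mod 2 via the Mahowald identification: recall that $\cdot Sq^{\star}: J(2\star)^* \to J(\star)^*$ is the ring homomorphism $\Z/2[x_0^2, x_1, x_2, \ldots] \to \Z/2[x_0, x_1, x_2, \ldots]$ killing $x_0^2$ and sending $x_{k+1} \mapsto x_k$. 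Under this, the $Q_1$-cycles $x(n)^2$, $x_1 x(n-1)^2$ in $J(2n)$ correspond to classes in (a shift of) $J(n)$, and one checks that $p(2n,2l)^*$ is compatible with $p(n,l)^*$ through these maps. This lets me pass from $2n, 2l$ down to $n, l$, where $n = 2^i + 2^{k_1} + \cdots + 2^{k_d}$ and $l = 2^i + 2^{j_1} + \cdots + 2^{j_d}$.

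The core computation is then to evaluate $p(n,l)^*$ on the bottom class $x(n) = x_i x_{k_1} \cdots x_{k_d} \in J(n)^{\alpha_2(n)}$, landing in $J(l)^{\alpha_2(n)} = J(l)^{\alpha_2(l)}$, which is one-dimensional spanned by $x(l) = x_i x_{j_1} \cdots x_{j_d}$. I would expand $p(n,l)^*(x(n))$ as a coefficient times $x(l)$ and identify that coefficient combinatorially. The key mechanism: $p(n,l)^*$ factors (up to the standard structure of $J(\star)^*$ as a polynomial algebra with the dual multiplications $\mu$) through the product maps $J(2^i) \otimes J(2^{k_1}) \otimes \cdots \otimes J(2^{k_d}) \to J(n)$ and the analogous one for $l$, together with the maps $p(2^{k_c}, 2^{j}): J(2^{k_c})^1 \to J(2^{j})^1$ between the degree-one parts, which are nonzero exactly when $j \le k_c$ by \lemref{Sq2^k lemma} (iterated). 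Counting the ways to route the factors $x_{k_1}, \ldots, x_{k_d}$ of $x(n)$ to the factors $x_{j_1}, \ldots, x_{j_d}$ of $x(l)$ — one each, and legally — is precisely counting $\sigma \in S_d$ with $j_{\sigma(c)} \le k_c$ for all $c$, i.e. $|S(J,K)|$. The factor $x_i$ (the $\nu_2$ part) routes uniquely. This gives $p(n,l)^*(x(n)) = |S(J,K)| \, x(l)$, hence $p(2n,2l)^*(x(n)^2) = |S(J,K)| \, x(l)^2$ after squaring and lifting back. The second cycle $x_1 x(n-1)^2$ is handled the same way: by \lemref{alpha nu lemma}, $n-1 = 2^i - 1 + 2^{k_1} + \cdots + 2^{k_d} = (1 + 2 + \cdots + 2^{i-1}) + 2^{k_1} + \cdots + 2^{k_d}$, and the low-index block $x_0 x_1 \cdots x_{i-1}$ of $x(n-1)$ routes to the corresponding block of $x(l-1)$ uniquely (the indices $0, \ldots, i-1$ are forced since $j_c, k_c > i$), so the same count $|S(J,K)|$ reappears, and the extra $x_1$ factor is carried along formally.

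The main obstacle I anticipate is the routing/compatibility step: justifying rigorously that $p(n,l)^*$ — which is only pinned down a priori in top degree $l$ — acts on the bottom class $x(n)$ by the naive "sum over legal bijections" formula. This requires leveraging the algebra structure of $J(\star)^*$ carefully: one wants to say that $p(n,l)^*$ restricted to the subalgebra generated by the relevant $x_k$'s is determined by its values on the degree-one generators, and that these values are governed by \lemref{Sq2^k lemma}. I would make this precise by writing $x(n)$ as $\mu$ of degree-one classes, using that $\mu$ and $p(n,l)^*$ interact via the uniqueness clause of the definition (both $p(n,l)^* \circ \mu$ and the iterated-$p_k$-then-$\mu$ composite are $A$-module maps $F(1)^{\otimes \alpha_2(n)} \to J(l)$, and they agree in top degree, hence everywhere by the universal property \eqref{defining property of F(m)} of $F(1)$), and then reading off the coefficient. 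Once that bookkeeping lemma is in place, the count of legal permutations is immediate and the proposition follows.
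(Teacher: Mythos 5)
Your combinatorial instinct is right, and the ``routing'' idea --- realize the bottom class as a product of degree-one generators via $\mu$, push this through $p(n,l)^*$, expand in a dual basis of $A$-module maps $J(\cdot) \otimes F(1)^{\otimes d} \to J(2l)$ indexed by permutations, and count the nonzero contributions --- is essentially the paper's argument. The genuine gap is the Mahowald reduction from $p(2n,2l)^*$ to $p(n,l)^*$. The surjection $\cdot Sq^n : J(2n) \to J(n)$ kills $x_0^2$ and sends $x_{k+1}\mapsto x_k$, so it carries $x(n)^2 = x_i^2 x_{k_1}^2\cdots x_{k_d}^2$ to $x(n/2)^2$ when $i = \nu_2(n)\geq 1$ and to $0$ when $i=0$, and it carries $x_1 x(n-1)^2$ to $0$ when $i\geq 1$ and to $x_0 x_{k_1-1}^2\cdots x_{k_d-1}^2$ when $i = 0$. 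In no case does it send $x(n)^2\in J(2n)$ to the bottom class $x(n)\in J(n)$, and in every case it annihilates exactly one of the two $Q_1$-cycle representatives. So there is no commuting square through the Mahowald maps that would let you recover both coefficients of $p(2n,2l)^*$ from $p(n,l)^*(x(n))$, and the phrase ``squaring and lifting back'' papers over a step that does not exist.

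The paper avoids this by running your routing argument directly in $J(2n)$: it takes $g: J(2^{i+1})\otimes F(1)^{\otimes d}\to J(2n)$, $g = \mu\circ(\id\otimes p_{k_1+1}\otimes\cdots\otimes p_{k_d+1})$, chosen so the single factor $J(2^{i+1})$ simultaneously contains both $x_i^2$ (mapping under $g$ to $x(n)^2$) and $x_1x_0^2\cdots x_{i-1}^2$ (mapping to $x_1 x(n-1)^2$). The dual-basis maps $h_\tau$ are then built the same way toward $J(2l)$, and comparing $p\circ g$ with $\sum h_\sigma$ on the top-degree piece $(J(2^{i+1})\otimes F(1)^{\otimes d})^{2l}$, whose basis is indexed by $S_d$, gives $p\circ g = \sum_{\sigma\in S(J,K)} h_\sigma$ exactly as you envisioned. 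In short: keep your argument, but apply it to $J(2n)\to J(2l)$ with the domain $J(2^{i+1})\otimes F(1)^{\otimes d}$ rather than detouring through $J(n)\to J(l)$; the Mahowald map cannot carry the cycle representatives where you need them.
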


\begin{prop} \label{prop 2}  $S(J,K)$ has the single element `identity' if ($\clubsuit$) holds, and has an even number of elements otherwise. 
\end{prop}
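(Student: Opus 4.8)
The plan is to establish Proposition \ref{prop 2} by a direct combinatorial analysis of the set $S(J,K)$, where $J = \{j_1 < \cdots < j_d\}$ and $K = \{k_1 < \cdots < k_d\}$ with $j_c \leq k_c$ forced to fail or hold according to the interleaving condition ($\clubsuit$). The key structural observation is that $S(J,K)$ is the set of permutations $\sigma$ satisfying $j_{\sigma(c)} \leq k_c$ for all $c$; since both sequences are strictly increasing, such sets have a well-known "staircase" or "Young-diagram" structure. First I would reduce to understanding, for each $c$, the set $\Phi(c) = \{ a : j_a \leq k_c \}$; because $j$ is increasing, $\Phi(c)$ is always an initial segment $\{1, \dots, \phi(c)\}$ of $\{1,\dots,d\}$, and because $k$ is increasing, $\phi(1) \leq \phi(2) \leq \cdots \leq \phi(d)$. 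Then $S(J,K) = \{\sigma \in S_d : \sigma(c) \leq \phi(c) \text{ for all } c\}$, and a standard counting argument (the permanent of a $0$-$1$ staircase matrix) gives $|S(J,K)| = \prod_{c=1}^d (\phi(c) - c + 1)$ whenever $\phi(c) \geq c$ for all $c$, and $|S(J,K)| = 0$ otherwise — the latter being the case where some "defect" $j_c > k_c$ is too severe to repair.

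Next I would translate this product formula into the statement of the proposition. The condition ($\clubsuit$), namely $j_1 \leq k_1 < j_2 \leq k_2 < \cdots < j_d \leq k_d$, says exactly that $j_c \leq k_c < j_{c+1}$ for each $c$, which forces $\phi(c) = c$ for all $c$; then the product $\prod(\phi(c)-c+1) = 1$, so $S(J,K) = \{\mathrm{id}\}$. Conversely, if ($\clubsuit$) fails, I need to show $|S(J,K)|$ is even. Since $j_c \leq k_c$ is assumed throughout (we are in the situation of \thmref{p(2n,2l) iso thm}, where both $2n$ and $2l$ have matching $\alpha_2$ and $\nu_2$, and the relevant degree-matching already puts us in a regime where $j_c \leq k_c$ holds for all $c$ — this should be checked, but follows from the good-pair analysis combined with $m \geq 2n$, hence $l \geq n$), we have $\phi(c) \geq c$ for all $c$, so $|S(J,K)| = \prod_{c=1}^d (\phi(c)-c+1)$. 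If ($\clubsuit$) fails, then $\phi(c) = c$ fails for some $c$, and since $\phi(c) \geq c$ always, we get $\phi(c) \geq c+1$, i.e. $\phi(c) - c + 1 \geq 2$, for at least one index. I would then argue that the product is even: more precisely, I claim some single factor $\phi(c)-c+1$ is even. This needs care — a product of integers each $\geq 2$ need not be even — so the real content is to show that whenever the staircase is "wide" somewhere, one can locate a factor that is precisely $2 \pmod 2$. Here I would use the rigidity coming from $j_c \leq k_c$: let $c$ be the \emph{smallest} index with $\phi(c) > c$; minimality forces $\phi(c-1) = c-1$, and combined with $j_c \leq k_c$ one gets $\phi(c) = c+1$ exactly (since $j_{c+1} > k_{c-1}$... ) — wait, this requires the more delicate observation that consecutive $\phi$ values cannot jump by more than the available room, so at the first place the staircase widens it widens by exactly one, giving a factor of $2$.

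The main obstacle I anticipate is precisely this last point: controlling the \emph{parity} of $|S(J,K)| = \prod_c(\phi(c)-c+1)$ rather than just its vanishing. The vanishing dichotomy (either ($\clubsuit$)-like strict interleaving giving $1$, or $|S(J,K)|=0$) is the easy classical part; the subtlety is that in the intermediate regime the product can be any integer $\geq 2$, and we need it to be even. The resolution should come from the constraint $j_c \leq k_c$ (valid in our setting), which I believe implies $\phi(c) \leq c+1$ for \emph{every} $c$ — because $\phi(c) \geq c+2$ would mean $j_{c+2} \leq k_c$, but also $j_{c+1} \leq k_c$ and... hmm, actually $j_{c+1} \leq k_{c+1}$ is the constraint, not $j_{c+1} \leq k_c$. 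Let me reconsider: the cleanest route is likely to show directly that if $\sigma \in S(J,K)$ is not the identity, then composing with a suitable transposition $(c, c+1)$ where $\sigma(c) > \sigma(c+1)$, or more carefully a fixed-point-free involution on $S(J,K) \setminus \{\mathrm{id}\}$, proves the count is odd iff $\{\mathrm{id}\}$ is the whole set. So the fallback plan, if the staircase-product parity argument is awkward, is: \emph{construct an explicit involution} $\iota : S(J,K) \to S(J,K)$ whose only fixed point is the identity when ($\clubsuit$) holds and which is fixed-point-free otherwise. A natural candidate: given $\sigma \neq \mathrm{id}$, find the largest $c$ with $\sigma(c) \neq c$; among $c' < c$, $\sigma$ must take some value $> $ something, and one can swap $\sigma(c)$ with $\sigma(\sigma^{-1}(c))$ or transpose an adjacent inversion — the key being that the transposition stays in $S(J,K)$ because swapping two values $j_{\sigma(c)}, j_{\sigma(c')}$ with $j_{\sigma(c)} < j_{\sigma(c')}$, $c < c'$, and $j_{\sigma(c')} \leq k_c$ (which follows since $j_{\sigma(c')} \leq k_{c'}$ and... no). I would spend most of the effort making this involution genuinely well-defined and preserving membership in $S(J,K)$; that verification, using $k_c \geq j_{\sigma(c)}$ and monotonicity of both $j$ and $k$, is where the combinatorial weight of the proof lies.
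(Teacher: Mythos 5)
Your framework is the right one, and in fact coincides with the paper's: your $\phi$ is exactly the paper's $s$, and you correctly identify $S(J,K) = \{\sigma : \sigma(c) \le \phi(c)\}$ with the product formula $|S(J,K)| = \prod_c (\phi(c)-c+1)$ when $\phi(c) \geq c$ throughout (and $0$ otherwise, which is already even, so there was no need to restrict to $j_c \leq k_c$). But your attempted parity argument has a genuine error: the claim that ``at the first place the staircase widens it widens by exactly one'' is false. Take $d=3$, $J = \{1,2,3\}$, $K = \{10,20,30\}$: then $\phi = (3,3,3)$ jumps from $0$ to $3$ at $c=1$. The staircase can widen arbitrarily fast; what it \emph{cannot} do is narrow by more than one step at a time, since $\phi(c+1)-\phi(c) \geq 0$ forces the excess $e(c) = \phi(c)-c$ to drop by at most $1$. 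Combined with $\phi(d) = d$ (since $\phi(d) \leq d$ and $\phi(d) \geq d$), the excess must return to $0$, so if it is ever $\geq 1$ it passes through exactly $1$ at the \emph{largest} index $c_1$ with $\phi(c_1) > c_1$, giving $\phi(c_1) = c_1+1$ and hence a factor of $2$. Looking at the last widening rather than the first repairs your argument.

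The paper sidesteps the product formula entirely with a cleaner observation that your fallback plan circles around but does not land on: since $\phi$ is nondecreasing with $\phi(c) \geq c$ and $\phi(d) \leq d$, either $\phi(c) = c$ for all $c$ (which is exactly ($\clubsuit$), giving $S = \{\mathrm{id}\}$) or $\phi(c) = \phi(c+1)$ for some $c$, in which case $\sigma \mapsto \sigma\circ(c\ c+1)$ is a free involution on $S(J,K)$ — one checks $\sigma'(c) = \sigma(c+1) \leq \phi(c+1) = \phi(c)$ and symmetrically — so $|S(J,K)|$ is even. This is the ``easily defined symmetry group'' the paper alludes to: the stabilizer of $\phi$ in $S_d$ acting by precomposition. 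Your own candidate involution, ``transpose an adjacent inversion of $\sigma$,'' does preserve membership in $S(J,K)$, but you would still have to canonically choose which inversion to transpose and verify that the resulting map squares to the identity, which is genuinely delicate (the ``smallest adjacent inversion'' can move after transposing). The paper's involution is defined by the shape of $\phi$ alone, not by $\sigma$, so well-definedness and the involution property are immediate. So: the gap in your write-up is real (the first parity argument is wrong, the fallback is unresolved), but both the product formula route (corrected) and the repeated-value transposition close it.
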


\begin{proof}[Proof of \propref{prop 1}]  For notational simplicity, in this proof we let $p=p(2n,2l)^*: J(2n) \ra J(2l)$.  Our proof makes heavy use of \lemref{pk lemma} which said that $p_k(t^{2^j}) = x_{k-j}^{2^j}$ for $j \leq k$, where $p_k: F(1) \ra J(2^k)$ is the nonzero $A$--module map.  

Let $g: J(2^{i+1}) \otimes F(1)^{\otimes d} \ra J(2n)$ be the composite
$$ J(2^{i+1}) \otimes F(1)^{\otimes d} \ra J(2^{i+1}) \otimes J(2^{k_1+1}) \otimes \cdots \otimes J(2^{k_d+1}) \xra{\mu} J(2n),$$

where the first map is $\id \otimes p_{k_1+1} \otimes \cdots \otimes p_{k_d+1}$ and the second is multiplication.

The map $g$ induces an epimorphism in $Q_1$--homology; more precisely,
$$ g(x_i^2 t_1^2 t_2^2 \cdots  t_d^2) = x_i^2x_{k_1}^2 \cdots x_{k_d}^2= x(n)^2 \text{ \hspace{.1in} and }$$
$$ g(x_1x_0^2\cdots x_{i-1}^2 t_1^2 t_2^2 \cdots  t_d^2) = x_1x_0^2\cdots x_{i-1}^2x_{k_1}^2 \cdots x_{k_d}^2= x_1x(n-1)^2.$$
Here $t_c$ is the one-dimensional class in the $c$th copy of $F(1)$ in the $d$--fold tensor product.

Therefore, it suffices to show that
\begin{equation} \label{pg 1} p(g(x_i^2 t_1^2 t_2^2 \cdots  t_d^2)) = |S(J,K)| x(l)^2 
\end{equation}
and
\begin{equation} \label{pg 2} p(g(x_1x_0^2\cdots x_{i-1}^2 t_1^2 t_2^2 \cdots  t_d^2)) = |S(J,K)| x_1x(l-1)^2.
\end{equation}

To do this, we identify $p \circ g: J(2^{i+1}) \otimes F(1)^{\otimes d} \ra J(2l)$ in a way that makes this easy to see.

Recall that $p\circ g$ is determined by its values on $(J(2^{i+1}) \otimes F(1)^{\otimes d})^{2l}$.  Note that this has basis $ \mathcal B = \{ b_{\sigma} \ | \ \sigma \in S_d \}$
where $b_{\sigma} = x_0^{2^{i+1}}t_1^{2^{1+j_{\sigma(1)}}}\cdots t_d^{2^{1+j_{\sigma(d)}}}$.

For all $c$,
$$ p_{k_c+1}(t^{2^{1+j_{\sigma(c)}}}) =
\begin{cases}
x_{k_c - j_{\sigma(c)}}^{2^{1+j_{\sigma(c)}}} & \text{if } k_c \geq j_{\sigma(c)} \\ 0 & \text{otherwise},
\end{cases}
$$ 
and it follows that $g(b_{\sigma})$ is a standard basis element in $J(2n)^{2l}$ for $\sigma \in S(J,K)$ and 0 otherwise. 
This implies
\begin{equation} \label{pg equation}
p(g(b_{\sigma})) =
\begin{cases}
x_0^{2l} & \text{if } \sigma \in S(J,K) \\ 0 & \text{otherwise}.
\end{cases}
\end{equation}

Now we identify the basis of $\Hom_{A}(J(2^{i+1}) \otimes F(1)^{\otimes d}, J(2l))$ dual to $\mathcal B$. 

Similar to our definition of $g$, we let $h: J(2^{i+1}) \otimes F(1)^{\otimes d} \ra J(2l)$  be the composite
$$ J(2^{i+1}) \otimes F(1)^{\otimes d} \ra J(2^{i+1}) \otimes J(2^{j_1 +1}) \otimes \cdots \otimes J(2^{j_d+1}) \xra{\mu} J(2l).$$
Similar to, but simpler than, our computation of $g(b_{\sigma})$, one computes that 
$$
h(b_{\sigma}) =
\begin{cases}
x_0^{2l} & \text{if } \sigma = \id \\ 0 & \text{otherwise}.
\end{cases}
$$

It follows that if we let $h_{\tau}$ be the composite
$$ J(2^{i+1}) \otimes F(1)^{\otimes d} \xra{\id \otimes \tau^{-1}} J(2^{i+1}) \otimes F(1)^{\otimes d} \xra{h} J(2l),$$
for $\tau \in S_d$, then
\begin{equation} \label{htau equation}
h_{\tau}(b_{\sigma}) =
\begin{cases}
x_0^{2l} & \text{if } \sigma = \tau \\ 0 & \text{otherwise}.
\end{cases}
\end{equation}

Comparing (\ref{pg equation}) with (\ref{htau equation}), we conclude that $\displaystyle p\circ g = \sum_{\sigma \in S(J,K)} h_{\sigma}$.

Now one checks that for all $\sigma \in S_d$, one has 
\begin{equation} \label{h 1} h_{\sigma}(x_i^2 t_1^2 t_2^2 \cdots  t_d^2) = x(l)^2 
\end{equation}
and
\begin{equation} \label{h 2} h_{\sigma}(x_1x_0^2\cdots x_{i-1}^2 t_1^2 t_2^2 \cdots  t_d^2) = x_1x(l-1)^2.
\end{equation}
Comparing these to (\ref{pg 1}) and (\ref{pg 2}), the proof of \propref{prop 1} is complete.

\end{proof}

\begin{proof}[Proof of \propref{prop 2}]
Recall that $j_1 < \cdots < j_d$, $k_1< \cdots < k_d$, and we wish to show that the set
$$ S(J,K) = \{\sigma \in S_d \ | \ j_{\sigma(c)} \leq k_c\}$$
will consist of only the identity element if 
\begin{equation*}\tag{$\clubsuit$} j_1 \leq k_1 < j_2 \leq k_2 < \cdots < j_d \leq k_d,
\end{equation*} and will have an even number of elements otherwise.

For $1 \leq c \leq d$, define a function $s$ by $s(c) = |\{ b \ | \ j_b \leq k_c\}|$. Then $0 \leq s(1) \leq \cdots \leq s(d) \leq d$, and if $s(c) > 0$ then $s(c)$ is the maximal $b$ such that $j_b \leq k_c$.   

From this it follows that  $S(J,K) = S(s)$ where 
$$S(s) = \{ \sigma \in S_d \ | \ \sigma(c) \leq s(c) \text{ for all } c \}.$$

If $s(c)<c$ for any $c$, e.g.\ if $s(c)=0$, $S(s)$ will be empty.  The condition $s(c) \geq c$ for all $c$ corresponds to the condition $j_c \leq k_c$ for all $c$.

Here is an interpretation of $S(s)$: it is set of the ways of fitting blocks of size $1, 2, \dots, d$ under the graph of $s$.

If any two of the $s(c)$'s equal each other, then one clearly gets an even number of ways of fitting the blocks in. (More generally our set is acted on freely by an easily defined symmetry group which will have even order if any two values of the $s(c)$'s are equal.)

If all the $s(c)$'s are positive and distinct then $s(c)=c$ for all $c$, and this holds if and only if ($\clubsuit$) holds. In this case, one easily sees that the identity element is the only element in $S(s)$. 
\end{proof}

\begin{proof}[Proof of \lemref{magic lemma}]  Recall that $m = l + 2^k$ with $l<2^k$, and $n$ and $l$ have binary decompositions  
$$l = 2^{j_0} + \cdots + 2^{j_d}, j_0 < \cdots < j_d,\text{ and } n = 2^{k_0} + \cdots + 2^{k_d}, k_0 < \cdots < k_d.$$
Our goal is to show that $\binom{m-n-1}{n} = 1 \mod 2$ if and only if $m \geq 2n$ and also
\begin{equation}\tag{$\diamondsuit$} j_0 \leq k_0 < j_1 \leq k_1 < \cdots < j_d \leq k_d. 
\end{equation}

Note that $\binom{m-n-1}{n} = 1 \mod 2$ implies that $m-n-1 \geq n$, and thus $m\geq 2n$.  We also note that $m \geq 2n$ holds if and only if $k>k_d$: if $k>k_d$, then $m \geq 2^k \geq 2^{k_d} \geq 2n$, and if $m > 2n$, then $2^{k+1} > l+2^k = m \geq 2n \geq 2^{k_d+1}$.

It therefore suffices to show that $\binom{m-n-1}{n} = 1 \mod 2$ if and only if ($\diamondsuit$), assuming that $k>k_d$.

Now $m-n-1 = (2^k-1-n) + l$, and $2^k-1-n$ is the sum of all powers of $2$ up to $2^{k-1}$ {\em except} those of the form $2^{k_c}$ for $c=0, \dots, d$.  We need to show that if we add $l$ to this, the resulting sum contains $2^{k_c}$ for all $c$ in its binary expansion if and only if ($\diamondsuit$).

It is convenient to let $k_{-1} = -1$.  For a fixed $c$, if no $j_b$ satisfies $k_{c-1} < j_b \leq k_c$, then $(2^k-1-n+l)$ will not contain $2^{k_c}$ in its binary expansion, and $\binom{m-n-1}{n} = 0 \mod 2$.  By the pigeonhole principle, the other alternative is precisely condition ($\diamondsuit$).  In this case one has
\begin{equation*}
\begin{split}
2^k-1-n+l &
= \sum_{c=0}^d  [(\sum_{k_{c-1} < b < k_c} 2^b) + 2^{j_c}]+ \sum_{k_d<b\leq k-1} 2^b \\
  & =
\sum_{c=0}^d  [(\sum_{k_{c-1} < b < j_c} 2^b) + 2^{k_c}]+ \sum_{k_d<b\leq k-1} 2^b,
\end{split}
\end{equation*}
and so $2^{k_c}$ is in the binary expansion of $2^k-1-n+l$ for all $c$, and thus $\binom{m-n-1}{n} = 1 \mod 2$.
\end{proof}

\section{Proof of \thmref{main thm}: part 3} \label{main proof part 3}

\thmref{main thm}(c) asserted that $Q(n,m)$ can only be a $Q_1$-acyclic if $m$ and $n$ are of the same parity. Thanks to \propref{reduce from q to p prop}, to prove this it suffices to prove the next two propositions.

\begin{prop} \label{even odd prop}  $p(2n,2l+1)^*: J(2n) \ra J(2l+1)$ does not induce an isomorphism in $Q_1$--homology, since $p(2n,2l+1)^*(x(n)^2) = 0$.
\end{prop}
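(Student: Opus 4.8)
The plan is to reuse the mechanism from the proof of \propref{prop 1}. Write $n = 2^{i_1} + \cdots + 2^{i_d}$ with $i_1 < \cdots < i_d$, so $d = \alpha_2(n)$, and let $g \colon F(1)^{\otimes d} \to J(2n)$ be the composite
\[
F(1)^{\otimes d} \xra{p_{i_1+1} \otimes \cdots \otimes p_{i_d+1}} J(2^{i_1+1}) \otimes \cdots \otimes J(2^{i_d+1}) \xra{\mu} J(2n).
\]
By \lemref{pk lemma} (with exponent $i=1$) together with the fact that $\mu$ is the product in $J(\star)^*$, the map $g$ carries $t_1^2 \otimes \cdots \otimes t_d^2$, where $t_c$ is the degree-$1$ generator of the $c$-th tensor factor, to $x_{i_1}^2 \cdots x_{i_d}^2 = x(n)^2$. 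So it suffices to prove that the composite $\psi := p(2n,2l+1)^* \circ g \colon F(1)^{\otimes d} \to J(2l+1)$ kills $t_1^2 \otimes \cdots \otimes t_d^2$.

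Since $F(1)^{\otimes d}$ is unstable, the defining property (\ref{defining property of J(n) again}) identifies $\Hom_A(F(1)^{\otimes d}, J(2l+1))$ with the dual of $(F(1)^{\otimes d})^{2l+1}$, whose basis consists of the elements $b = t^{2^{a_1}} \otimes \cdots \otimes t^{2^{a_d}}$ with $\sum_c 2^{a_c} = 2l+1$. Exactly as in the proof of \propref{prop 1}, one checks using \lemref{pk lemma} and the vanishing of $J(2^a)$ above degree $2^a$ that the map dual to $b$ is realized by the composite $F(1)^{\otimes d} \xra{p_{a_1} \otimes \cdots \otimes p_{a_d}} J(2^{a_1}) \otimes \cdots \otimes J(2^{a_d}) \xra{\mu} J(2l+1)$. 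Expanding $\psi$ in this dual basis, it then remains only to see that each of these composites annihilates $t_1^2 \otimes \cdots \otimes t_d^2$, and here the parity of $2l+1$ is decisive: any $b$ as above must have $a_c = 0$ for at least one $c$, and then $p_{a_c}(t_c^2) = p_0(t^2)$ lies in $J(1)^2 = 0$, so the corresponding factor of $\mu\big(p_{a_1}(t_1^2) \otimes \cdots \otimes p_{a_d}(t_d^2)\big)$ vanishes. Thus $\psi(t_1^2 \otimes \cdots \otimes t_d^2) = 0$, that is, $p(2n,2l+1)^*(x(n)^2) = 0$. Since $x(n)^2$ represents a nonzero class in $H(J(2n);Q_1)$ by \corref{m=1 cor}, the induced map on $Q_1$--homology is not injective, hence not an isomorphism.

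The one step that takes genuine care is the verification that the composites $\mu \circ (p_{a_1} \otimes \cdots \otimes p_{a_d})$ really form the dual basis of $\Hom_A(F(1)^{\otimes d}, J(2l+1))$; this is the analogue of the computation of the maps $h_\tau$ in \propref{prop 1}, with the minor twist that the exponents $a_c$ may coincide here, so one is comparing multisets of exponents rather than permuting a fixed set (though \lemref{pk lemma} applies without change). I expect this to be the main bookkeeping obstacle; everything else is immediate once the factorization through $g$ and the parity observation are in place.
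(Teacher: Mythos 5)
Your proof is correct, and it takes a genuinely different -- and cleaner -- route than the paper's. The paper first restricts attention to the ``good pairs'' (where $H(J(2n);Q_1)$ and $H(J(2l+1);Q_1)$ are abstractly isomorphic, forcing $\alpha_2(n)=\alpha_2(l-1)+1$ and $\nu_2(n)+\nu_2(l)=1$), then splits into two cases according to which of $\nu_2(n),\nu_2(l)$ equals $0$. In each case the domain is given an extra tensor factor, $J(2)\otimes F(1)^{\otimes \alpha_2(n)-2}$ or $J(4)\otimes F(1)^{\otimes \alpha_2(n)-1}$, and the vanishing ultimately comes from a low-degree fact specific to that case: $p(2,1)^*(x_0^2)=0$ or $p(4,3)^*(x_1^2)=0$, respectively. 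You avoid the preliminary good-pair discussion and the case split entirely by pulling the full class $x(n)^2$ back through $F(1)^{\otimes \alpha_2(n)}$ -- natural, since it is a square -- and then the vanishing is uniform: every tuple $(a_1,\dots,a_d)$ with $\sum 2^{a_c}=2l+1$ has some $a_c=0$ by parity, and $p_0$ kills $t^2$ because $J(1)^2=0$.

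The dual-basis verification you flag as the delicate point does go through with repeated exponents: if $a'_c>a_c$ for some $c$ then $p_{a_c}(t^{2^{a'_c}})=0$ since $J(2^{a_c})$ vanishes above degree $2^{a_c}$, while if $a'_c\le a_c$ for all $c$ with $\sum 2^{a'_c}=\sum 2^{a_c}$ then all exponents agree; so $\mu\circ(p_{a_1}\otimes\cdots\otimes p_{a_d})$ really is the functional dual to $b$. You could also bypass the dual basis altogether: $t_1^2\otimes\cdots\otimes t_d^2$ lies in $(F(1)')^{\otimes d}$ where $F(1)'=\langle t^2,t^4,t^8,\dots\rangle\subset F(1)$ is the sub-$A$-module generated by $t^2$; since $(F(1)')^{\otimes d}$ is an unstable module concentrated in even degrees, (\ref{defining property of J(n) again}) gives $\Hom_A((F(1)')^{\otimes d},J(2l+1))=0$, so $\psi$ restricted to $(F(1)')^{\otimes d}$ vanishes. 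Either way, your argument is complete and appreciably shorter than the one in the paper.
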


\begin{prop} \label{odd even prop}  $p(2n+1,2l)^*: J(2n+1) \ra J(2l)$ does not induce an isomorphism in $Q_1$--homology, since $p(2n+1,2l)^*(x_0 x(n)^2) = 0$.
\end{prop}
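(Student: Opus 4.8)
The plan is to bootstrap \propref{odd even prop} from the even--domain case \propref{even odd prop} via multiplication by $x_0$; this is the step at which the parity hypothesis gets used. Since $x_0x(n)^2$ is a $Q_1$--cycle representing a nonzero class in $H(J(2n+1);Q_1)$ (\corref{m=1 cor}, with $e=1$), and both $H(J(2n+1);Q_1)$ and $H(J(2l);Q_1)$ are two--dimensional, it suffices to prove the displayed assertion $p(2n+1,2l)^*(x_0x(n)^2)=0$ in $J(2l)$: the induced map on $Q_1$--homology then has a nontrivial kernel, hence is not an isomorphism. (The edge cases $n=0$ and $l=0$ are immediate on degree grounds --- when $l=0$ the map $p(2n+1,0)^*$ vanishes identically --- so I will take $n,l\geq 1$.)

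The heart of the matter is an intertwining identity. Write $x_0\cdot$ for multiplication by $x_0\in J(1)$ in the algebra $J(\star)^*$, viewed as a degree $+1$ map $J(r)\ra J(r+1)$; recall that $x_0\cdot\colon J(2m)\ra J(2m+1)$ is the Mahowald isomorphism $\Sigma J(2m)\xra{\sim}J(2m+1)$ (so the source index has to be even), and that $x_0\cdot\colon J(2l-1)\ra J(2l)$ is the Mahowald injection $\Sigma J(2l-1)\hra J(2l)$. I claim
\[
 p(2n+1,2l)^*\circ(x_0\cdot)\;=\;(x_0\cdot)\circ p(2n,2l-1)^*\colon J(2n)\lra J(2l),
\]
an equality of degree $+1$ maps, equivalently of $A$--module maps $J(2n+1)=\Sigma J(2n)\ra J(2l)$. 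To see it, use that by the $\U$--injectivity of $J(2l)$, i.e.\ \eqref{defining property of J(n) again}, an $A$--module map $J(2n+1)\ra J(2l)$ is pinned down by the functional it induces in degree $2l$, valued in $J(2l)^{2l}\simeq\Z/2$. Because $J(2n+1)=x_0\cdot J(2n)$, every standard basis element of $J(2n+1)^{2l}$ is of the form $x_0\mathbf b$ with $\mathbf b$ a standard basis element of $J(2n)^{2l-1}$; by definition of $p(2n+1,2l)^*$ one has $p(2n+1,2l)^*(x_0\mathbf b)=x_0^{2l}$, while $p(2n,2l-1)^*(\mathbf b)=x_0^{2l-1}$ so that $(x_0\cdot)\bigl(p(2n,2l-1)^*(\mathbf b)\bigr)=x_0^{2l}$ as well. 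Both sides thus induce the same functional on $J(2n+1)^{2l}$, proving the identity.

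Granting this, I evaluate on $x(n)^2\in J(2n)$:
\[
 p(2n+1,2l)^*(x_0x(n)^2)\;=\;x_0\cdot p(2n,2l-1)^*(x(n)^2)\;=\;x_0\cdot p\bigl(2n,2(l-1)+1\bigr)^*(x(n)^2)\;=\;0,
\]
the last equality being \propref{even odd prop}. I do not anticipate a serious obstacle beyond careful bookkeeping: since $x_0\cdot$ raises degree by one, ``$p$ commutes with multiplication by $x_0$'' must be read as an identity of maps $\Sigma J(2n)\ra J(2l)$, and the top--degree comparison via \eqref{defining property of J(n) again} only goes through because $2n+1$ is odd with even predecessor, so that $\Sigma J(2n)\cong J(2n+1)$ --- for an even domain the argument breaks down, which is why \propref{even odd prop} must be proved by other means.
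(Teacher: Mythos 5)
Your intertwining identity
\[
 p(2n+1,2l)^*\circ(x_0\cdot)\;=\;(x_0\cdot)\circ p(2n,2l-1)^*
\]
is correct, and the verification via the functional on $(\Sigma J(2n))^{2l}\simeq J(2n)^{2l-1}$ using \eqref{defining property of J(n) again} is sound. This is genuinely different from the paper's route, which builds a map $g\colon J(3)\otimes F(1)^{\otimes d}\to J(2n+1)$ (resp.\ $J(5)\otimes F(1)^{\otimes d}$) hitting $x_0x(n)^2$ and kills $p\circ g$ by a degree count.

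There is, however, a gap in the final step, where you invoke \propref{even odd prop} to get $p(2n,2l-1)^*(x(n)^2)=0$. The paper's proof of \propref{even odd prop} only verifies that vanishing when $(n,l')$ is a \emph{good pair} for that proposition, i.e.\ when $\alpha_2(n)=\alpha_2(l'-1)+1$ and $\nu_2(n)+\nu_2(l')=1$; for bad pairs it relies on the automatic degree mismatch and never touches $p(x(n)^2)$. But in the only case where \propref{odd even prop} requires an argument --- $(n,l)$ a good pair for \propref{odd even prop}, so $\nu_2(n)+\nu_2(l)=1$ --- the pair $(n,l-1)$ is \emph{never} a good pair for \propref{even odd prop}: demanding both $\nu_2(n)+\nu_2(l)=1$ and $\nu_2(n)+\nu_2(l-1)=1$ forces $\nu_2(l)=\nu_2(l-1)$, impossible for consecutive integers. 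So the exact instance of \propref{even odd prop} your reduction requires is one the paper's proof of that proposition leaves unaddressed, and you cannot close the argument by citation alone.

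The vanishing you need is in fact true and provable by the same degree trick the paper applies directly to \propref{odd even prop}. In Case 1 ($\nu_2(n)=0$, $\nu_2(l)=1$, $d=\alpha_2(n)-1$) one factors $x(n)^2$ through $g\colon J(2)\otimes F(1)^{\otimes d}\to J(2n)$ and checks $(J(2)\otimes F(1)^{\otimes d})^{2l-1}=0$ since $\alpha_2(2l-2)=\alpha_2(2l-3)=d+1>d$; Case 2 works similarly with $J(4)\otimes F(1)^{\otimes d}$. If you add this verification (or prove \propref{even odd prop}'s vanishing statement in the generality you use it), your intertwining argument does give a clean alternative to the paper's proof, with the parity reduction to the even-domain case nicely isolated in the identity above.
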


\begin{proof}[Proof of \propref{even odd prop}]
For notational simplicity, in this proof we let $p=p(2n, 2l + 1)^*: J(2n) \ra J(2l + 1)$. As we are assuming $2n \geq 2$, if $l = 0$ then $p$ is certainly not a $Q_1$-isomorphism because the dimensions of the source and target differ. Thus, we may assume $l\geq 1$ for the remainder of the proof.

As in Section~\ref{main proof part 2}, we begin by identifying the \emph{good pairs}, i.e.\ the pairs $(n, l)$ for which the graded vector spaces $H(J(2n); Q_1)$ and $H(J(2l + 1); Q_1)$ are isomorphic. The former is represented by the classes $x(n)^2$ and $x_1 x(n - 1)^2$ in degrees $2\alpha_2(n)$ and $2\alpha_2(n - 1) + 1$, and the latter by the classes $x_0 x(l)^2$ and $x_0 x_1 x(l - 1)^2$ in degrees $2\alpha_2(l) + 1$ and $2\alpha_2(l - 1) + 2$.

Matching degrees and simplifying, we find that $(n, l)$ is a good pair if and only if
$$
\alpha_2(n) = \alpha_2(l - 1) + 1 \text{\hspace{.1in} and \hspace{.1in}} \alpha_2(n - 1) = \alpha_2(l),
$$
or, making use of \lemref{alpha nu lemma}, if and only if
$$
\alpha_2(n) = \alpha_2(l - 1) + 1 \text{\hspace{.1in} and \hspace{.1in}} \nu_2(n) + \nu_2(l) = 1.
$$

We consider these two cases separately. \\

\noindent \textbf{Case \#1}: $\nu_2(n) = 0$ and $\nu_2(l) = 1$.

These conditions, together with the condition $\alpha_2(n) = \alpha_2(l - 1) + 1$, imply that $n$ and $l$ have the form
$$
l = 2 + 2^{j_1} + \cdots + 2^{j_d} \text{ with } 1 < j_1 < \cdots < j_d
$$
and
$$
n = 1 + 2^{k_1} + \cdots + 2^{k_{d+1}} \text{ with } 0 < k_1 < \cdots < k_{d+1}
$$
where $d := \alpha_2(l) - 1 = \alpha_2(n) - 2$.

Let $g : J(2) \otimes F(1)^{\otimes d+1} \to J(2n)$ be the composite
$$
J(2) \otimes J(2^{k_1 + 1}) \otimes \cdots J(2^{k_{d+1} + 1}) \xrightarrow{\mu} J(2n),
$$
where the first map is $\id \otimes p_{k_1 + 1} \otimes \cdots \otimes p_{k_{d+1} + 1}$ and the second is multiplication. Since
$$
g(x_0^2 \otimes t_1^2 \otimes t_2^2 \otimes \cdots \otimes t_{d+1}^2) = x_0^2 x_{k_1}^2 \cdots x_{k_{d+1}}^2 = x(n)^2,
$$
(where $t_c$ denotes the one-dimensional class in the $c$th copy of $F(1)$ in the $(d+1)$-fold tensor product), it suffices to show that
$$
p(g(x_0^2 \otimes t_1^2 \otimes \cdots \otimes t_{d+1}^2)) = 0.
$$

To do this, we will produce a basis for $\Hom_A(J(2) \otimes F(1)^{\otimes d+1}, J(2l + 1))$ and show that each element of this basis maps $x_0^2 \otimes t_1^2 \otimes \cdots \otimes t_{d+1}^2$ to zero. A map of this form is determined by its values on $(J(2) \otimes F(1)^{\otimes d+1})^{2l + 1}$, which has basis $\mathcal{B} = \{b_{\sigma} \,|\, \sigma \in S_{\{0, \ldots, d\}}\}$, where
$$
b_{\sigma} = x_1 \otimes t^{2^{j_{\sigma(0)} + 1}} \otimes t^{2^{j_{\sigma(1)} + 1}} \otimes \cdots \otimes t^{2^{j_{\sigma(d)} + 1}}
$$
and $j_0 := 1$. Now we identify the basis of $\Hom_{A}(J(2) \otimes F(1)^{\otimes d+1}, J(2l + 1))$ dual to $\mathcal{B}$. Let $h : J(2) \otimes F(1)^{\otimes d+1} \to J(2l + 1)$ be the composite
$$
J(2) \otimes F(1)^{\otimes d+1} \to J(1) \otimes J(4) \otimes J(2^{j_1 + 1}) \otimes \cdots \otimes J(2^{j_d + 1}) \xrightarrow{\mu} J(2l + 1),
$$
where the first map is $p(2, 1)^* \otimes p_2 \otimes p_{j_1 + 2} \otimes \cdots \otimes p_{j_d + 2}$, and then let $h_{\tau}$ be the composite
$$
J(2) \otimes F(1)^{\otimes d+1} \xrightarrow{\id \otimes \tau^{-1}} J(2) \otimes F(1)^{\otimes d+1} \xrightarrow{h} J(2l + 1)
$$
for $\tau \in S_{d+1}$. One checks that
$$
h_{\tau}(b_{\sigma}) = \begin{cases}
    x_0^{2l+1} & \text{if }\sigma = \tau, \\
    0 & \text{otherwise,} \\
\end{cases}
$$
verifying that $\{h_{\tau} \,|\, \tau \in S_{d+1}\}$ is the basis dual to $\mathcal{B}$. But then
$$
h_{\tau}(x_0^2 \otimes t_1^2 \otimes \cdots \otimes t_{d+1}^2) = h(x_0^2 \otimes t_1^2 \otimes \cdots \otimes t_{d+1}^2) = 0
$$
for all $\tau \in S_{d+1}$, for the simple reason that $p(2, 1)^*(x_0^2) = 0$. From this it follows that any map $J(2) \otimes F(1)^{\otimes d+1} \to J(2l + 1)$, in particular the map $p \circ g$, annihilates $x_0^2 \otimes t_1^2 \otimes \cdots \otimes t_{d+1}^2$. \\

\noindent \textbf{Case \#2}: $\nu_2(n) = 1$ and $\nu_2(l) = 0$.

In this case, $n$ and $l$ must have the form
$$
l = 1 + 2^{j_1} + \cdots + 2^{j_d} \text{ with } 0 < j_1 < \cdots < j_d
$$
and
$$
n = 2 + 2^{k_1} + \cdots + 2^{k_d} \text{ with } 1 < k_1 < \cdots < k_d
$$
where $d := \alpha_2(l) - 1 = \alpha_2(n) - 1$. Let $g$ be the composite
$$
J(4) \otimes F(1)^{\otimes d} \to J(4) \otimes J(2^{k_1 + 1}) \otimes \cdots \otimes J(2^{k_d + 1}) \xrightarrow{\mu} J(2n),
$$
where the first map is $\id \otimes p_{k_1 + 1} \otimes \cdots \otimes p_{k_d + 1}$. Since
$$
g(x_1^2 \otimes t_1^2 \otimes \cdots \otimes t_d^2) = x_1^2 x_{k_1}^2 \cdots x_{k_d}^2 = x(n)^2,
$$
it suffices to show that $p(g(x_1^2 \otimes t_1^2 \otimes \cdots \otimes t_d^2)) = 0$.

A basis for $(J(4) \otimes F(1)^{\otimes d})^{2l + 1}$ is given by $\{b_{\sigma}\}_{\sigma \in S_d}$, where
$$
b_{\sigma} = x_0^2 x_1 \otimes t^{2^{k_{\sigma(1)} + 1}} \otimes \cdots \otimes t^{2^{k_{\sigma(d)} + 1}},
$$
and the corresponding basis of $\Hom_A(J(4) \otimes F(1)^{\otimes d}, J(2l + 1))$ is $\{h_{\tau}\}_{\tau \in S_d}$, where $h_{\tau}$ is the map
$$
J(4) \otimes F(1)^{\otimes d} \xrightarrow{1 \otimes \tau^{-1}} J(4) \otimes F(1)^{\otimes d} \xrightarrow{h} J(2l + 1)
$$
and $h = \mu \circ (p(4, 3)^* \otimes p_{j_1 + 1} \otimes \cdots \otimes p_{j_d + 1})$. But then
$$
h_{\tau}(x_1^2 \otimes t_1^2 \otimes \cdots \otimes t_d^2) = h(x_1^2 \otimes t_1^2 \otimes \cdots \otimes t_d^2) = 0
$$
because
$$
p(4, 3)^*(x_1^2) = p(4, 3)^*(Sq^1(x_2)) = Sq^1(p(4, 3)^*(x_2)) = Sq^1(0) = 0.
$$
It follows that any map $J(4) \otimes F(1)^d \to J(2l + 1)$, in particular $p \circ g$, annihilates $x_1^2 \otimes t_1^2 \otimes \cdots \otimes t_d^2$.
\end{proof}

\begin{proof}[Proof of \propref{odd even prop}]
As with \propref{even odd prop}, we can assume $l \geq 1$. By reversing the roles of $n$ and $l$ in the proof of \propref{even odd prop}, we find that $(n, l)$ is a \textit{good pair}, i.e.\ $H(J(2n + 1); Q_1)$ and $H(J(2l); Q_1)$ are isomorphic, if and only if
$$
\alpha_2(n) = \alpha_2(l - 1) \text{\hspace{.1in} and \hspace{.1in}} \nu_2(n) + \nu_2(l) = 1.
$$
So we consider two cases. \\

\noindent \textbf{Case \#1}: $\nu_2(n) = 0$ and $\nu_2(l) = 1$.

In this case, $n$ and $l$ have the form
$$
l = 2 + 2^{j_1} + \cdots + 2^{j_d} \text{ with } 1 < j_1 < \cdots < j_d
$$
and
$$
n = 1 + 2^{k_1} + \cdots + 2^{k_d} \text{ with } 0 < k_1 < \cdots < k_d,
$$
where $d = \alpha(n) - 1 = \alpha(l) - 1$. Let $g$ be the composite
$$
J(3) \otimes F(1)^{\otimes d} \to J(3) \otimes J(2^{k_1 + 1}) \otimes \cdots \otimes J(2^{k_d + 1}) \xrightarrow{\mu} J(2n + 1),
$$
where the first map is $\id \otimes p_{k_1 + 1} \otimes \cdots \otimes p_{k_d + 1}$. Since
$$
g(x_0^3 \otimes t_1^2 \otimes \cdots \otimes t_d^2) = x_0^3 x_{k_1}^2 \cdots x_{k_d}^2 = x_0 x(n)^2,
$$
it suffices to show that $p(2n + 1, 2l)^*(g(x_0^3 \otimes t_1^2 \otimes \cdots \otimes t_d^2)) = 0$.

Now $p(2n + 1, 2l)^* \circ g$ is a map $J(3) \otimes F(1)^{\otimes d} \to J(2l)$, but the only such map is the zero map since $(J(3) \otimes F(1)^{\otimes d})^{2l} = 0$; to see this, observe that $\alpha(2l - i) = d + 1$ for $2 \leq i \leq 3$ (the range of degrees in which $J(3)$ is nontrivial). \\

\noindent \textbf{Case \#2}: $\nu_2(n) = 1$ and $\nu_2(l) = 0$.

In this case, $n$ and $l$ have the form
$$
l = 1 + 2^{j_1} + \cdots + 2^{j_{d+1}} \text{ with } 0 < j_1 < \cdots < j_{d+1}
$$
and
$$
n = 2 + 2^{k_1} + \cdots + 2^{k_d} \text{ with } 1 < k_1 < \cdots < k_d,
$$
where $d = \alpha(n) - 1 = \alpha(l) - 2$. Let $g$ be the composite
$$
J(5) \otimes F(1)^{\otimes d} \to J(5) \otimes J(2^{k_1 + 1}) \otimes \cdots \otimes J(2^{k_d + 1}) \xrightarrow{\mu} J(2n + 1),
$$
where the first map is $\id \otimes p_{k_1 + 1} \otimes \cdots \otimes p_{k_d + 1}$. Since
$$
g(x_0 x_1^2 \otimes t_1^2 \otimes \cdots \otimes t_d^2) = x_0 x_1^2 x_{k_1}^2 \cdots x_{k_d}^2 = x_0 x(n)^2,
$$
it suffices to show that $p(2n + 1, 2l)^*(g(x_0 x_1^2 \otimes t_1^2 \otimes \cdots \otimes t_d^2)) = 0$.

Now $p(2n + 1, 2l)^* \circ g$ is a map $J(5) \otimes F(1)^{\otimes d} \to J(2l)$, but the only such map is the zero map since $(J(5) \otimes F(1)^{\otimes d})^{2l} = 0$; to see this, observe that $\alpha(2l - i) \geq d + 1$ for $2 \leq i \leq 5$ (the range of degrees in which $J(5)$ is nontrivial).
\end{proof}

\section{Applications} \label{applications}

\subsection{Proof of \thmref{K theory thm}}

We prove the various statements in \thmref{K theory thm}.  
\begin{proof}[Proof of \thmref{K theory thm}(a)]
Let $n$ be even,  $\nu_2(n) = i$ and suppose $ n = 2^i + 2^{k_1} + \cdots + 2^{k_d}$ with  $i < k_1 < \cdots < k_d$, so $d = \alpha_2(n)-1$.  It is convenient to let $n_c = 2^i + \sum_{j=1}^c 2^{k_j}$ for $c = 0,1, \dots d$, so $2^i = n_0 < \cdots < n_d = n$. 

If we let $f:T(n) \ra T(2^i)$ be the composite of the maps
$$ T(n) \xra{f(n_{d-1}, n_d)} T(n_{d-1}) \xra{f(n_{d-2}, n_{d-1})} \cdots \xra{f(n_1,n_2)} T(n_1) \xra{f(n_0,n_1)} T(n_0),$$
then $f$ is the composite of $L_1$--equivalences, so is an $L_1$--equivalence.  As the composition of $d$ maps, each inducing 0 in mod 2 cohomology, $f$ has Adams filtration at least $d$.  
\end{proof}
\begin{proof}[Proof of \thmref{K theory thm}(b)]
As discussed at the end of \secref{HZ/2 section}, there is a Mahowald cofibration sequence
$$ T(2^{i-1}) \xra{\alpha} T(2^i) \xra{\beta^{\prime}} \Sigma T(2i-1).$$
Now $\Sigma T(2i-1) \simeq \Sigma^2 T(2^i -2)$, and by part (a), there is an $L_1$--equivalence $f: T(2^i-2) \ra T(2)$. Therefore, if we let $\beta$ be the composite $\Sigma^2 f\circ \beta^{\prime}$, then the sequence
\begin{equation} \label{L1 Mahowald cofib} T(2^{i-1}) \xra{\alpha} T(2^i) \xra{\beta} \Sigma^2 T(2)
\end{equation}
will be a cofibration sequence after $L_1$--localization.
\end{proof}
\begin{proof}[Proof of \thmref{K theory thm}(c)]  
For $i \geq 1$, we prove that $K^0(T(2^i))_{(2)} \simeq \Z/2^i$, and $K^1(T(2^i))_{(2)} \simeq 0$ by induction on $i$. Recalling that $T(2) = \Sinfty \R P^2$, one sees that this is true for $i=1$.

For the inductive step, assume the calculation for $i-1$.  Because the 2--local $K$--theory of both $T(2^{i-1})$ and $T(2)$ is concentrated in degree 0, the sequence (\ref{L1 Mahowald cofib})
will induce a short exact sequence
\begin{equation} \label{K-theory ses}  0 \ra K^0(T(2))_{(2)} \ra K^0(T(2^i))_{(2)} \ra K^0(T(2^{i-1}))_{(2)} \ra 0,
\end{equation}
and we also learn that $K^1(T(2^i))_{(2)} \simeq 0$.

In particular, we learn that $K^0(T(2^i))_{(2)}$ is a finite abelian group of order $2^i$.  This group must be cyclic, since $K(1)^*(T(2^i))$ can be calculated from this group by the universal coefficient theorem, and we know independently that $K(1)^*(T(2^i))$ is 2--dimensional.  

It follows that $K^0(T(2^i))_{(2)} \simeq \Z/{2^i}$ and (\ref{K-theory ses}) must be equivalent to
\[0 \ra \Z/2 \ra \Z/2^i \ra \Z/2^{i-1} \ra 0.\qedhere\]
\end{proof}

\subsection{Connections to the thesis of Brian Thomas}

Mod 2 homology applied to the Goodwillie tower of the functor $X \rightsquigarrow \Sinfty \Oinfty X$ yields a spectral sequence converging to $H_*(\Oinfty X)$ for all 0--connected spectra $X$.  The Hopf algebra $H_*(\Oinfty X)$ is a module over the Dyer--Lashof algebra $\mathcal R$, and, as studied in \cite{kuhn mccarty}, this is reflected in the spectral sequence: $E^{\infty}_{*,*}(X)$ is always a primitively generated Hopf algebra equipped with Dyer--Lashof operations compatible with those acting on $H_*(\Oinfty X)$..  

Let $\displaystyle E(X) = \bigoplus_{s=0}^{\infty} \Ext_A^{s,s}(H^*(X), J(\star))$.   \cite[Cor. 1.14]{kuhn mccarty} implies that, if a certain `geometric condition' holds and $E(X)$ is concentrated in even degrees and is generated by $\Hom_A(H^*(X),J(\star))$ as an $\mathcal R$--module, then 
$$ E^{\infty}_{*,*}(X) = S^*(E(X))/(Q_0x-x^2),$$ 
where we let $Q_0x = Q^{|x|}x$.

In Thomas' thesis \cite{Brian Thomas thesis}, his goal is to verify that this applies when $X = \Sigma^2 ku$, so that $\Oinfty X = BU$.  One knows that $H^*(ku) = A \otimes_{E(1)} \Z/2$, and Thomas carefully computes $\displaystyle E(\Sigma^2 ku) = \bigoplus_{s=0}^{\infty} \Ext_{E(1)}^{*,*}(\Sigma^2 \Z/2, J(\star))$ with some of its $\mathcal R$--module structure. 

The punchline is that $E(\Sigma^2 ku)$ has a basis of elements 
$$\bar b_n(k) \in \Ext_A^{\alpha_2(n)-1+k, \alpha_2(n)-1+k}(H^*(\Sigma^2 ku), J(2^{k+1}n))$$
with $n \geq 1$ and $k \ge 0$,  and one has the following formula: if $ n = 2^i + 2^{j_1} + \cdots + 2^{j_d}$ with  $i < j_1 < \cdots < j_d$, then 
$$\bar b_n(k) = Q_0^kQ^{2^{j_d}}\cdots Q^{2^{j_1}}\bar b_{2^i}(0).$$
(The behavior of $Q_0$ here is a bit more than is proved in \cite{Brian Thomas thesis}, and uses \cite[Theorem 1.7]{kuhn dl} applied to calculation in \cite{Brian Thomas thesis}.)

One concludes that $E^{\infty}_{*,*}(\Sigma^2ku) = \Z/2[\bar b_1(0), \bar b_2(0), \dots]$. This recovers the known calculation of $H_*(BU)$, usually computed using very different means, along with some of its Dyer-Lashof operations.

A key to the calculations is to use \thmref{2nd main thm}(b) to prove algebraic analogues of statements (a) and (b) of \thmref{K theory thm}.

\begin{prop}  If $ n = 2^i + 2^{j_1} + \cdots + 2^{j_d}$ with  $1 \leq i < j_1 < \cdots < j_d$, then 
$$Q^{2^{j_d}}\cdots Q^{2^{j_1}}: \Ext_{E(1)}^{s,s}(\Sigma^2 \Z/2, J(2^i)) \ra \Ext_{E(1)}^{s-1+\alpha(n), s-1+\alpha(n)}(\Sigma^2 \Z/2, J(n)).$$
is an isomorphism.
\end{prop}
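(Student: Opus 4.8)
The plan is to reduce the statement to \thmref{2nd main thm}(b) by a change-of-rings argument and then compose the resulting isomorphisms. \emph{First, change of rings.} As $\Sigma^2\Z/2$ is an $E(1)$-module and each $J(k)$ an $A$-module, the standard change-of-rings isomorphism gives, naturally in $k$,
\[
\Ext_{E(1)}^{s,t}(\Sigma^2\Z/2,\,J(k))\;\cong\;\Ext_A^{s,t}\bigl(A\otimes_{E(1)}\Sigma^2\Z/2,\;J(k)\bigr)\;=\;\Ext_A^{s,t}\bigl(\Sigma^2 H^*(ku),\;J(k)\bigr).
\]
Since this isomorphism comes from the adjunction $A\otimes_{E(1)}(-)\dashv\mathrm{res}$, it is compatible with Yoneda splice in the $J(k)$-variable and carries the operation $Q^r$ on the left to Yoneda splice with $Q(k,k+r)\in\Ext_A^{1,1}(J(k),J(k+r))$ on the right. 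Recalling from the examples following \thmref{2nd main thm} that $H^*(ku)=A\otimes_{A(1)}\bigl(A(1)\otimes_{E(1)}\Z/2\bigr)$, we have $M:=\Sigma^2 H^*(ku)=A\otimes_{A(1)}N$ with $N=\Sigma^2 A(1)\otimes_{E(1)}\Z/2$, which has exactly the form required in \thmref{2nd main thm}(b).

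\emph{Next, factor the iterated operation and verify hypotheses.} Put $n_0=2^i$ and $n_c=2^i+2^{j_1}+\cdots+2^{j_c}$ for $1\le c\le d$, so that $n_d=n$ and $Q^{2^{j_d}}\cdots Q^{2^{j_1}}$ is the $d$-fold composite whose $c$th map is Yoneda splice with $Q(n_{c-1},n_c)$. Each pair $(n_{c-1},n_c)$ satisfies the conditions of \thmref{main thm}(a): because $i\ge 1$, every $n_c$ is even with $\nu_2(n_c)=i$; because $j_c$ exceeds every bit of $n_{c-1}$, we have $\alpha_2(n_c)=c+1=\alpha_2(n_{c-1})+1$; and $n_c-n_{c-1}-2=2^{j_c}-2$ has binary expansion precisely the positions $1,\dots,j_c-1$, so since the bits of $n_{c-1}$ occupy positions $i,j_1,\dots,j_{c-1}$ with $1\le i$ and $j_{c-1}\le j_c-1$, we get $\binom{2^{j_c}-2}{n_{c-1}}\equiv 1\pmod 2$. (This is the only binary-expansion combinatorics needed, and is a transparent special case of \lemref{magic lemma}.) Hence \thmref{2nd main thm}(b) applies to each of the $d$ splices: the $c$th one is an isomorphism $\Ext_A^{s,t}(M,J(n_{c-1}))\to\Ext_A^{s+1,t+1}(M,J(n_c))$ for $s\ge 1$ and an epimorphism for $s=0$.

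\emph{Finally, assemble.} For $s\ge 1$ the homological degree only grows along the composite, so every factor is an isomorphism and therefore so is $Q^{2^{j_d}}\cdots Q^{2^{j_1}}$ on $\Ext^{s,s}$. For $s=0$ the composite is at least an epimorphism; to upgrade it to an isomorphism one checks that the source $\Ext^{0,0}_{E(1)}(\Sigma^2\Z/2,J(2^i))$ is one-dimensional --- spanned, using $i\ge 1$, by the map sending the generator to the unique nonzero class $x_{i-1}^2\in J(2^i)^2$ --- and that the target group $\Ext^{\alpha_2(n)-1,\,\alpha_2(n)-1}_{E(1)}(\Sigma^2\Z/2,J(n))$ is likewise at most one-dimensional. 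This last fact does not follow formally from \thmref{2nd main thm}(b), and it is where I expect the genuine work to be: I would establish it by analyzing $J(n)$ directly as an $E(1)$-module (its $Q_0$- and $Q_1$-Margolis homologies, recalled in \secref{K(n) section}, being the relevant input) or from the $K$-theory of the $T(n)$. Everything else is bookkeeping built on top of change of rings and \thmref{2nd main thm}(b).
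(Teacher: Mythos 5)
The paper states this proposition without proof (the section notes only that the ``key to the calculations is to use \thmref{2nd main thm}(b)'' and that ``more details about these calculations will appear elsewhere''), so there is no paper argument to compare against word for word. Your reduction --- change of rings to $\Ext_A(\Sigma^2 H^\ast(ku), J(\star))$, factoring $Q^{2^{j_d}}\cdots Q^{2^{j_1}}$ into $d$ Yoneda splices with $Q(n_{c-1},n_c)$ along the filtration $n_0 = 2^i < n_1 < \cdots < n_d = n$, verifying that each pair meets the conditions of \thmref{main thm}(a), and then invoking \thmref{2nd main thm}(b) --- is exactly the route the paper advertises, and your verification of the hypotheses (evenness and $\nu_2$ from $i\geq 1$, $\alpha_2$ increments by one, and the binomial coefficient $\binom{2^{j_c}-2}{n_{c-1}}\equiv 1$ because the bits of $n_{c-1}$ lie in positions $\{1,\dots,j_c-1\}$) is correct. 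For $s\geq 1$ the argument is complete.

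The one flaw is at $s=0$, and it is a real flaw in the stated plan, not just an unfilled-in detail. You say you would ``upgrade'' the epimorphism to an isomorphism by checking that the source is one-dimensional and the target is \emph{at most} one-dimensional. But once you know the map from the one-dimensional source is an epimorphism, the target is automatically at most one-dimensional --- that estimate is free and buys you nothing. What actually has to be shown is that the target $\Ext_{E(1)}^{\alpha_2(n)-1,\alpha_2(n)-1}(\Sigma^2\Z/2, J(n))$ is \emph{nonzero}: if it vanished, the epimorphism from a one-dimensional source would be zero, hence not injective, and the proposition would fail. So the ``genuine work'' you flag is correctly located, but the bound you propose to prove there is the wrong one. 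To close the gap you need a lower bound --- e.g.\ an explicit nonvanishing class in that $\Ext$ group produced by resolving $J(n)$ over $E(1)$ (using the $Q_0$-acyclicity from \corref{m=0 cor} and the two-dimensionality of $Q_1$-homology from \corref{m=1 cor} to control the stable $E(1)$-module type of $J(n)$), or an argument that the specific class $Q^{2^{j_d}}\cdots Q^{2^{j_1}}[x_{i-1}^2]$ is detected, say by its image in $K$-theory. Your sketch gestures at the right tools; just be aware that proving an upper bound on the target is not the missing step.
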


\begin{prop}  The Mahowald sequences induce short exact sequences
\begin{equation*} 
\begin{split} 
0 \ra \Ext_{E(1)}^{s,s}(\Z/2,J(2)) & \ra \Ext_{E(1)}^{s+i,s+i}(\Sigma^2\Z/2, J(2^{i+1})) \\
& \ra \Ext_{E(1)}^{s+i,s+i}(\Sigma^2\Z/2, J(2^{i})) \ra 0.
\end{split}
\end{equation*}
\end{prop}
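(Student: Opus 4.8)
The plan is to prove this by an $\Ext$--theoretic version of the argument for \thmref{K theory thm}(b): I would start from a Mahowald short exact sequence, apply a suitable $\Ext$ functor, and then identify one of the resulting terms using the preceding Proposition, which is the algebraic analogue of \thmref{K theory thm}(a). Concretely, take the Mahowald short exact sequence (\ref{mahowald ses}) with $n = 2^i$,
\[ 0 \ra \Sigma J(2^{i+1}-1) \ra J(2^{i+1}) \xra{\cdot Sq^{2^i}} J(2^i) \ra 0, \]
and rewrite $\Sigma J(2^{i+1}-1) \cong \Sigma^2 J(2^{i+1}-2)$ using the isomorphism $\Sigma J(2m) \xrightarrow{\sim} J(2m+1)$ (applied to $2^{i+1}-1 = 2(2^i-1)+1$). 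Restricting this sequence of $A$--modules to $E(1)$ and applying $\Ext_{E(1)}^{*,*}(\Sigma^2\Z/2, -)$ produces a long exact sequence of bigraded groups.

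To identify the term coming from $\Sigma^2 J(2^{i+1}-2)$, observe that $2^{i+1}-2 = 2 + 2^2 + \cdots + 2^i$ has $\nu_2 = 1$ and $\alpha_2 = i$. The preceding Proposition therefore shows that the iterated Dyer--Lashof operation $Q^{2^i}\cdots Q^{2^2}$ (Yoneda splice with the relevant $Q(n,m)$'s, as in \thmref{2nd main thm}(b)) gives, up to the appropriate reindexing, an isomorphism $\Ext_{E(1)}^{*,*}(\Sigma^2\Z/2, J(2)) \cong \Ext_{E(1)}^{*,*}(\Sigma^2\Z/2, J(2^{i+1}-2))$; suspending the targets by $2$ converts this into an isomorphism $\Ext_{E(1)}^{*,*}(\Z/2, J(2)) \cong \Ext_{E(1)}^{*,*}(\Sigma^2\Z/2, \Sigma^2 J(2^{i+1}-2))$, and the right-hand side is exactly the first term of the long exact sequence. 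Thus the proposition reduces to showing that this long exact sequence degenerates into the asserted short exact sequences; equivalently, that the relevant connecting homomorphisms vanish.

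Proving this vanishing is the heart of the matter, and I expect it to be the main obstacle (note that the Mahowald sequence does \emph{not} split after restriction to $E(1)$, so this cannot be read off formally). My approach would be to exploit the $E(1)$--module structure of $J(2^i)$, $J(2^{i+1})$, and $J(2^{i+1}-2)$. By \corref{m=0 cor} each of these is $Q_0$--acyclic, hence free over $E(0) = \Lambda(Sq^1)$, which is a normal sub-Hopf-algebra of $E(1)$ with quotient $\Lambda(Q_1)$. The Cartan--Eilenberg change-of-rings spectral sequence for $E(0) \subset E(1)$ then collapses and computes $\Ext_{E(1)}^{s,t}(\Sigma^2\Z/2, J(m))$ as $\Ext_{\Lambda(Q_1)}^{s,t}\big(\Sigma^2\Z/2, \ker(Q_0\colon J(m)\ra J(m))\big)$, and the latter is governed by the $Q_1$--action on $\ker Q_0$, for which the Margolis-homology cycle representatives of \corref{m=1 cor} are explicit. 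Carrying out the degree bookkeeping through this reduction --- in particular tracking how $E(0)$--freeness interacts with the $Q_1$--action on $\ker Q_0$ --- one checks that the source and target of each connecting homomorphism bordering the relevant diagonal lie in bidegrees where the corresponding $\Ext$ group vanishes; since a connecting homomorphism raises homological degree by one while preserving internal degree, it must be zero. With that established, the long exact sequence breaks up, and after the reindexing of the first term supplied by the preceding Proposition one reads off the short exact sequences in the statement.
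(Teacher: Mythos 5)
The paper does not include a proof of this Proposition (it defers to ``More details about these calculations will appear elsewhere''), so I cannot compare your argument with one of the authors'. Judged on its own, your outline --- take the Mahowald sequence $0 \to \Sigma^2 J(2^{i+1}-2) \to J(2^{i+1}) \xra{\cdot Sq^{2^i}} J(2^i) \to 0$, apply $\Ext_{E(1)}(\Sigma^2\Z/2,-)$ to get a long exact sequence, identify the first term using the preceding Proposition, and show the boundary maps vanish --- is almost certainly the intended approach, and the observations you make along the way (the identification $\Sigma J(2^{i+1}-1) = \Sigma^2 J(2^{i+1}-2)$, the fact that $2^{i+1}-2$ has $\alpha_2 = i$ and $\nu_2 = 1$ so that the preceding Proposition applies with the iterated operation $Q^{2^i}\cdots Q^{2^2}$) are correct.

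There are, however, two concrete gaps. First, you pass from the preceding Proposition's isomorphism $\Ext_{E(1)}^{s,s}(\Sigma^2\Z/2, J(2)) \cong \Ext_{E(1)}^{s+i-1,s+i-1}(\Sigma^2\Z/2, J(2^{i+1}-2))$ to the asserted identification $\Ext_{E(1)}^{s,s}(\Z/2,J(2)) \cong \Ext_{E(1)}^{s+i,s+i}(\Sigma^2\Z/2, \Sigma^2 J(2^{i+1}-2))$ by saying ``suspending the targets by $2$'' and ``up to the appropriate reindexing,'' but this needs to be tracked explicitly: the preceding Proposition produces a homological shift by $\alpha_2(2^{i+1}-2)-1 = i-1$ and has $\Sigma^2\Z/2$ in both slots, whereas the term in the long exact sequence (after cancelling $\Sigma^2$ from both arguments) is $\Ext_{E(1)}^{s+i,s+i}(\Z/2, J(2^{i+1}-2))$, which differs both in the homological shift (off by one) and in the suspension of the first argument. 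You should reconcile this bookkeeping honestly rather than wave it away; either an extra shift by $v_0$ or $Q_0$ is needed and must be justified, or one of the degree counts is wrong.

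Second, and more importantly, the vanishing of the connecting homomorphisms is the entire content of the statement and you leave it as a plan. Your Cartan--Eilenberg change-of-rings reduction ($E(0)\subset E(1)$ with quotient $\Lambda(Q_1)$, using $Q_0$-freeness from \corref{m=0 cor}) is sensible, but it is not carried out, and the final step (``one checks\ldots bidegrees where the corresponding $\Ext$ group vanishes'') is asserted, not proved. Note also that there are two boundary maps abutting the asserted short exact sequence, and only one vanishes for trivial degree reasons: the one \emph{leaving} the relevant bidegree lands in $\Ext_{E(1)}^{a,b}$ with $a > b$, which vanishes when the second argument is bounded below in nonnegative degrees (an argument that is simpler than your change-of-rings route); but the boundary map \emph{entering} the sequence, i.e.\ the one whose vanishing gives injectivity of $\Ext(\Sigma^2\Z/2, \Sigma^2 J(2^{i+1}-2)) \to \Ext(\Sigma^2\Z/2, J(2^{i+1}))$ on the diagonal, does not vanish for bidegree reasons alone and requires genuine information about the $E(1)$--module structure of $J(2^{i+1})$ and $J(2^{i+1}-2)$. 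This is where the actual work of the proof lies, and it remains to be done.
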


More details about these calculations will appear elsewhere.

\begin{rem} \label{magic lemma remark}  It is known \cite{kochman}, that the Dyer-Lashof operations acting on $H_*(BU) = \Z/2[b_1, b_2, \dots]$, with $b_n$ of degree $2n$, satisfy:
$$ Q^{2r}b_n = \binom{r-1}{n} b_{n+r} \mod \text{decomposables},$$
and this, together with \thmref{p(2n,2l) iso thm}, suggested \lemref{magic lemma} (by letting $m=n+r$).
\end{rem}

\end{document}